\documentclass[11pt]{article}
\usepackage[margin=1in]{geometry}
\usepackage[utf8]{inputenc}
\usepackage{amsmath}
\usepackage{amssymb}
\usepackage{mathtools}
\usepackage{amsthm}
\usepackage{authblk}
\usepackage{threeparttable}
\usepackage{breakcites}
\usepackage{natbib}
\bibliographystyle{apalike}
\usepackage{graphicx}
\usepackage{xcolor}
\usepackage[title]{appendix}
\newtheorem{theorem}{Theorem}
\newtheorem{proposition}{Proposition}
\newtheorem{lemma}{Lemma}
\newtheorem{corollary}{Corollary}
\theoremstyle{remark}
\newtheorem{remark}{Remark}
\newtheorem{assumption}{Assumption}
\theoremstyle{definition}
\newtheorem{example}{Example}

\newenvironment{assumptionp}[1]
  {%
   \addtocounter{assumption}{-1}%
   \begin{assumption}}
  {\end{assumption}}

\begin{document}

\title{Kernel Two-Sample Tests in High Dimension: Interplay Between Moment Discrepancy and Dimension-and-Sample Orders}
\author{Jian Yan}
\author{Xianyang Zhang\thanks{Zhang and Yan acknowledge partial support from NSF DMS-1607320 and NSF DMS-1811747. Address correspondence to Xianyang Zhang
(zhangxiany@stat.tamu.edu).}}
\date{}
\affil{Texas A\&M University}

\maketitle

\textbf{Abstract} Motivated by the increasing use of kernel-based metrics for high-dimensional and large-scale data, we study the asymptotic behavior of kernel two-sample tests when the dimension and sample sizes both diverge to infinity. We focus on the maximum mean discrepancy (MMD) using isotropic kernel, including MMD with the Gaussian kernel and the Laplace kernel, and the energy distance as special cases. We derive asymptotic expansions of the kernel two-sample statistics, based on which we establish the central limit theorem (CLT) under both the null hypothesis and the local and fixed alternatives. The new non-null CLT results allow us to perform asymptotic exact power analysis, which reveals a delicate interplay between the moment discrepancy that can be detected by the kernel two-sample tests and the dimension-and-sample orders. The asymptotic theory is further corroborated through numerical studies. 
\\
\strut \textbf{Keywords:} 
High dimensionality; Kernel method; Power analysis; Two-sample testing.

\section{Introduction}
Nonparametric two-sample testing, aiming to determine whether two collections of samples are from the same distribution without specifying the exact parametric forms of the distributions, is one of the fundamental problems in statistics. The most traditional tools in this domain include the Kolmogorov-Smirnov test \citep{smirnov1939estimation}, Cramér–von Mises test \citep{anderson1962distribution}, Wald-Wolfowitz runs test \citep{wald1940test}, and Mann–Whitney U test \citep{mann1947test}. Extensions and generalizations of these tests were studied in \citet{darling1957kolmogorov}, \citet{bickel1969distribution}, \citet{friedman1979multivariate}, among many others.

Modern nonparametric tests have been developed based on integral probability metrics (see \citet{sriperumbudur2012empirical}). Notable members include the kernel-based maximum mean discrepancy (MMD) two-sample test \citep{gretton2012kernel} and energy distance-based two-sample test \citep{szekely2004testing}. These metrics are gaining increasing popularity in both the statistics and machine learning communities and they have been applied to a plethora of statistical problems including goodness-of-fit testing \citep{szekely2005new}, nonparametric analysis of variance \citep{rizzo2010disco}, change-point detection \citep{matteson2014nonparametric,chakraborty2021high}, finding representative points of a distribution \citep{mak2018support}, and controlled variable selection \citep{romano2020deep}.

In the past decade, high-dimensional and large-scale data are becoming prevalent, particularly in machine learning and deep learning applications, and the attention to the kernel two-sample testing for high-dimensional and large-scale data is also naturally increasing. Examples include training and evaluating deep generative networks \citep{li2015generative}, deep transfer learning \citep{long2017deep}, and variational auto-encoder \citep{louizos2015variational}. 

Motivated by the increasing use of kernel and distance-based metrics for high-dimensional and large-scale data, we propose to investigate the behaviors of these metrics when the dimension and sample sizes both diverge to infinity. Through numerical studies, \cite{ramdas2015decreasing} showed that the power of MMD-based two-sample tests drops polynomially with increasing dimension against the alternatives, where the Kullback-Leibler divergence between the pairs of distributions remains constant. When the sample sizes are fixed and the dimension diverges to infinity, \cite{chakraborty2021new} showed that the energy distance and MMD only detect the equality of means and the traces of covariance matrices; see \cite{zhu2021interpoint} for similar findings on the permutation-based tests. Under the null hypothesis, a recent arXiv paper by \cite{gao2021two} obtained the central limit theorem for the studentized sample MMD as both the dimension and sample sizes diverge to infinity. Other related works include \cite{szekely2013distance}, \cite{zhu2020distance}, \cite{gao2021asymptotic} and \cite{han2021generalized} which study the kernel and distance-based dependence metrics in high dimensions.

Despite the recent advances in understanding the behaviors of the kernel and distance-based metrics in high dimension, a complete picture of the power behavior of the kernel two-sample tests under different dimension-and-sample orders is still lacking. To address the issue, we utilize asymptotic expansions of the kernel two-sample statistics, based on which we establish the central limit theorem (CLT) under both the null hypothesis and the local and fixed alternatives. Built on the null CLT and a new variance estimator, we develop a MMD-based two-sample test in high dimension. The new non-null CLT results facilitate the asymptotic exact power analysis, which reveals an interesting interplay between the moment discrepancy that can be detected by the kernel two-sample tests and the dimension-and-sample orders. Specifically, the findings are as follows. 
First, when $N$ grows slower than $\surd{p}$, MMD detects the discrepancy lying on the means and the traces of covariance matrices. When $N$ grows slower than $p^{2r-{\color{red}1/2}}$ for $r\ge {\color{red}1}$, MMD mainly detects the discrepancy lying on the first ${\color{red}2r}$ moments. Table \ref{tb1} summarizes the features and explicit quantities captured by MMD for different dimension-and-sample orders. The features characterized by MMD are encoded in the quantities $\Delta_0$ and $T_s\ (s\ge 1)$. In particular, $\Delta_0$ captures the discrepancy between the means and the traces of covariance matrices, while $T_s$ quantifies the differences between the tensors formed by higher-order moments, see Section \ref{sec3.4}. 
Second, we summarize several scenarios in Tables \ref{tb2}--\ref{tb3}, where the MMD-based test will have trivial or non-trivial power depending on the moment discrepancy and the dimension-and-sample orders. 
Third, as a byproduct, we also discover the impact of the kernel and bandwidth on the asymptotic power; see Section \ref{sec:impact-kernel} for a detailed discussion. 
Lastly, our asymptotic theory is applicable to a large class of kernels of the type $k(x,y)=f(\|x-y\|_{2}^2/\gamma)$ for $x,y\in\mathbb{R}^p$ and the bandwidth $\gamma>0$, where $f$ is a sufficiently smooth function. Examples include the MMD with the Gaussian kernel, Laplace kernel and rational quadratic kernel, and the energy distance. 

Among the existing literature, the most closely related paper to ours is the one by \citet{gao2021two}. Yet, our results differ significantly from theirs in four crucial aspects: (i) \citet{gao2021two} proved the null CLT under a set of abstract moment conditions, which could be challenging to verify (see also Theorem B.1 of \citet{chakraborty2021new}), while our results are obtained under a general multivariate model specified in (\ref{eq-m}) below and do not impose any restriction on the dimension-and-sample orders for the null CLT; (ii) the non-null CLTs are new and have not been obtained previously in the literature; (iii) \citet{gao2021two} showed that when the dimension grows much slower than the sample sizes, the MMD-based two-sample test has asymptotic power approaching one under some specific alternatives, whereas we study the asymptotic power under a broader range of dimension-and-sample orders. We discuss cases when the asymptotic power is trivial or non-trivial; (iv) the techniques employed in our proofs are also very different from those in \citet{gao2021two}.
Therefore, our findings complement those in the recent literature and shed new light on kernel two-sample tests for high-dimensional and large-scale data.

\begin{table}
\centering
\footnotesize
\def~{\hphantom{0}}
\caption{Dimension and sample size orders, main features captured by MMD as well as the explicit quantities affecting the asymptotic power. }{%
\begin{tabular}{lll}
%\\
\\
Dimension and sample size orders & Main features captured & Explicit quantities\\
$N=o(\surd{p})$ &  mean and trace of covariance & $\Delta_0$ \\
$N=o(p^{3/2})$  & mean and covariance & $\Delta_0,T_1$\\
$N=o(p^{2r-{\color{red}1/2}})$ & the first ${\color{red}2r}$ moments & $\Delta_0,T_1,\ldots,T_{{\color{red}2r-1}}$ \\
fixed $p$, growing $N$ & total homogeneity & $\text{MMD}^2(P_X,P_Y)$\\ 
\end{tabular}}
\label{tb1}
\begin{tablenotes}
\item The quantities $\Delta_0$ and $T_s\ (s=1,\ldots,l-1)$ are defined in (\ref{delta0-2}), (\ref{T1-2}) and (\ref{eq-Ts}) respectively.
\end{tablenotes}
\end{table}

The take-home message of this work is that the faster the sample sizes grow relative to the dimension, the higher-order moment discrepancy the kernel tests are capable of detecting. Or, put differently, the faster the dimension grows relative to the sample sizes, the stronger linearization effect the kernel and distance-based metrics exhibit. In a broader sense, our discovery is reminiscent of the phenomenon that nonlinear kernel gets linearized as the dimension grows with the sample size \citep{el2010spectrum}. We expect similar phenomenon to hold for other kernel methods in the high-dimensional setting. 

To end the introduction, we introduce some notation that will be used throughout the paper. For two real-valued sequences $\{x_p\}$ and $\{a_p\}$, we write $x_{p}=O(a_{p})$ if and only if $|x_{p}/a_{p}|\leq C$ for $p\ge p_{0}$ and some constant $C>0$; $x_{p}=o(a_{p})$ if and only if $\lim_{p\rightarrow\infty}|x_{p}/a_{p}|=0$. Denote by $x_{p}=\Omega(a_{p})$ if and only if $|x_{p}/a_{p}|\ge C>0$ for $p\ge p_{0}$ and some constant $C$; $x_{p}=\omega(a_{p})$ if and only if $\lim_{p\rightarrow\infty}|x_{p}/a_{p}|=\infty$; $x_{p}=\Theta(a_{p})$ if and only if $0<C_{2}\le|x_{p}/a_{p}|\leq C_{1}$ for $p\ge p_{0}$ and some constants $C_{1},C_{2}>0$. For a real-valued function $f$ that is $L$-times differentiable, we denote its $l$th derivative at $x$ by $f^{(l)}(x)$ for $l=1,\ldots,L$. The function $f$ is said to be of (differentiability) class $C^L$ if the derivatives $\{f^{(l)}:l=1,\ldots,L\}$ exist and are continuous. For a matrix or tensor $\mathcal{T}$, we let $\|\mathcal{T}\|_{\rm op}$ be its operator norm, and $\|\mathcal{T}\|_{\rm F}$ be its Frobenius norm.

\section{Preliminaries}
Let $X,Y\in \mathbb{R}^p$ be two random vectors, with respective probability measures $P_X$ and $P_Y$. Given the observations $\{X_i\}^{n}_{i=1}$ and $\{Y_j\}^{m}_{j=1}$ independently and identically distributed (i.i.d.) from $P_X$ and $P_Y$, respectively, we aim to test the following hypothesis
\[
H_0:P_X=P_Y \quad \text{versus} \quad H_a:P_X\neq P_Y.   
\]

A broad class of metrics for quantifying the discrepancy between $P_X$ and $P_Y$ is the integral probability metric (IPM) defined as
\[
\mathcal{M}(P_X,P_Y)=\sup_{f\in\mathcal{F}}\left|E\{f(X)\}-E\{f(Y)\}\right|,    
\]
where $\mathcal{F}$ is a class of real-valued bounded measurable functions on $\mathbb{R}^p$ \citep{sriperumbudur2012empirical}. Notable examples include the Dudley metric, the Wasserstein distance, the total variation distance and maximum mean discrepancy (MMD). When $\mathcal{F}$ is a unit ball in the reproducing kernel Hilbert space $\mathcal{H}_k$ with associated kernel $k(\cdot,\cdot)$, the IPM becomes the MMD which admits a closed-form given by \citep{gretton2012kernel}
\[
\text{MMD}^2(P_X,P_Y)=\|\mu_{P_{X}}-\mu_{P_{Y}}\|_{\mathcal{H}_k}^2=E\{k(X,X')\}+E\{k(Y,Y')\}-2E\{k(X,Y)\},
\]
where $\mu_{P}$ is the kernel mean embedding defined as $\mu_{P}=\int k(\cdot,x)dP(x)$, and $(X',Y')$ is an i.i.d. copy of $(X,Y)$. The kernel $k$ is said to be characteristic if the mapping $P\rightarrow\mu_{P}$ is injective. If the kernel $k$ is characteristic, then $\text{MMD}(P_X,P_Y)=0$ if and only if $P_X=P_Y$.
An unbiased estimator of MMD \citep{gretton2012kernel} is given by
\begin{equation}\label{eq-mmds}
\text{MMD}_{n,m}^{2}=\frac{1}{n(n-1)}\sum_{i_{1}\neq i_{2}}k(X_{i_{1}},X_{i_{2}})+\frac{1}{m(m-1)}\sum_{j_{1}\neq j_{2}}k(Y_{j_{1}},Y_{j_{2}})-\frac{2}{nm}\sum_{i,j}k(X_{i},Y_{j}),
\end{equation}
which is a two-sample U-statistic. Under $H_{0}$, $\text{MMD}_{n,m}^{2}$ is degenerate, and when $p$ is fixed, 
\[
\frac{nm}{n+m}\text{MMD}_{n,m}^{2}\rightarrow\sum_{l=1}^{\infty}\lambda_{l}(z_{l}^{2}-1)\text{ in distribution, as }n,m\rightarrow\infty,
\]
where $\{z_{l}\}$ is a sequence of independent $N(0,1)$ random variables, and $\lambda_{l}$'s depend on the distribution $P_X=P_Y$ and the kernel $k$. 

Here we consider the isotropic kernel. A kernel is said to be isotropic if it is only a function of distance between its arguments, i.e, $k(x,y)=f(\|x-y\|_{2}^{2}/\gamma)$, where $f$ is a real-valued function on $[0,+\infty)$ and $\gamma$ is a bandwidth parameter. Examples include the widely-used Gaussian kernel $f(x)=\exp(-x)$, the Laplace kernel $f(x)=\exp(-\surd{x})$, and the rational quadratic kernel $f(x)=(1+x)^{-\alpha}$ for $\alpha>0$. 
On top of that, we want to emphasize that our analysis does not require $k(x,y)$ to be even a valid kernel function. Indeed, our arguments are applicable to any two-sample metrics with $k(x,y)=f(\|x-y\|_{2}^{2}/\gamma)$, where $f$ satisfies Assumption \ref{assumption5} below. In particular, up to a scaling factor $1/\surd{\gamma}$, the energy distance corresponds to $f(x)=-\surd{x}$, which can also be viewed as a special case. Besides, one can easily verify that, when $f(x)=-x$, $\text{MMD}_{n,m}^{2}$ reduces to the two-sample statistic for testing the equality of high-dimensional means \citep{chen2010two}, up to a scaling factor $2/\gamma$.

\section{Null and non-null central limit theorems}\label{sec:stationary}
\subsection{Assumptions and asymptotic expansions}\label{sec:expan}
We cope with MMD using the kernel of the type $k_{p}(x,y)=f(\|x-y\|_{2}^{2}/\gamma_{p})$, where $\gamma_p=\Theta(p)$. Suppose $\lim_{p\rightarrow\infty} \gamma_p/p=c_0$, then $k_p(x,y)$ is asymptotically equivalent to $\tilde{f}(\|x-y\|_{2}^{2}/p)$ with $\tilde{f}(\cdot)=f(\cdot/c_0)$. Thus, without loss of generality, we shall assume below that $\gamma_p=p$ and suppress the dependence of $k$ on $p$ when there is no confusion. We remark that the order of the bandwidth chosen by the commonly used median heuristic is indeed $\Theta(p)$ \citep{reddi2015high}.

Throughout the analysis, we shall make the following assumptions. 
\begin{assumption}\label{assumption1}
Assume that 
\begin{equation}\label{eq-m}
    X_{i}=\Gamma_1 U_{i}+\mu_{1}\ (i=1,\ldots,n),\quad
    Y_{j}=\Gamma_2 V_{j}+\mu_{2}\ (j=1,\ldots,m),
\end{equation}
where $\Gamma_i\in\mathbb{R}^{p\times q}$ with $\Gamma_1\Gamma_1^\top=\Sigma_1$, $\Gamma_2\Gamma_2^\top=\Sigma_2$, and $q$ could be bigger, equal to or smaller than $p$. Also, $\{U_{i}\}_{i=1}^{n}$ and $\{V_{j}\}_{j=1}^{m}$ are both $q$-variate i.i.d. random vectors satisfying that $E(U_i)=E(V_j)=0$ and $\text{var}(U_i)=\text{var}(V_j)=I_q.$ Furthermore, if we write $U_i=(U_i(1),\ldots,U_i(q))$ and $V_j=(V_j(1),\ldots,V_j(q))$, we assume that the entries $\{U_i(k)\}_k$ and $\{V_j(k)\}_k$ are independent (not necessarily identically distributed) and $\max_{1\leq k\leq q}\{E(U_{1}(k)^{8}),E(V_{1}(k)^{8})\}<\infty$.
\end{assumption}

\begin{assumption}\label{assumption2}
Under the model in Assumption \ref{assumption1}, suppose the operator norms of $\Sigma_{1}$ and $\Sigma_{2}$ remain bounded in $p$, i.e., there exists $K>0$, such that $\max\{\|\Sigma_{1}\|_{\rm op},\|\Sigma_{2}\|_{\rm op}\}\leq K$, for all $p$. Besides, we assume that $\text{tr}(\Sigma_{i})=\Theta(p)$, for $i=1,2$.
\end{assumption}

\begin{assumption}\label{assumption3}
$\|\mu_{i}\|_{2}^{2}=O(p)$ for $i=1,2$. Under Assumption \ref{assumption2}, we further have $\mu_{i}^{\top}\Sigma_{j}\mu_{i}\le\|\Sigma_{j}\|_{\rm op}\|\mu_{i}\|_{2}^{2}=O(p)$ for $i,j=1\text{ or }2$. 
\end{assumption}

\begin{assumption}\label{assumption4}
$\lim_{N\rightarrow\infty}n/N= \kappa\in(0,1)$, where $N=n+m$.
\end{assumption}

\begin{assumption}\label{assumption5}
Let $g(x)=f(x^{2})$, then $k(x,y)=g(\|x-y\|_2/\surd{p})$. We assume that $g$ is a $C^{3}$ function on $[0,+\infty)$, and $\sup_{1\leq s\leq 3}\sup_{x\geq 0}|g^{(s)}(x)|<\infty$.
\end{assumption}

Assumption \ref{assumption1} is common in the random matrix theory literature, and it has been used in the analysis of the spectrum of kernel random matrix \citep{el2010spectrum}, kernel ridgeless regression \citep{liang2020just}, and high-dimensional two-sample tests \citep{bai1996effect,chen2010two}. Assumption \ref{assumption2} requires the eigenvalues of $\Sigma_{1}$ and $\Sigma_{2}$ to be uniformly bounded from above, and the number of nonzero eigenvalues to be of the order $\Theta(p)$. However, we do not require the eigenvalues to be bounded away from zero. The assumption is trivially satisfied for data with independent entries and bounded variance. It also holds for data with, for example, \textsc{ar}$(1)$ structure or banded covariance, but not the compound symmetry structure as its largest eigenvalue is of order $\Theta(p)$. Assumption \ref{assumption3} ensures that the Taylor expansions below are around points that are asymptotically bounded. Assumption \ref{assumption5} covers the aforementioned Gaussian kernel, Laplace kernel, rational quadratic kernel, and the energy distance. 

Define
\[
\tau_{i}=2\frac{\text{tr}(\Sigma_{i})}{p}~\text{for }i=1,2,\quad \tau_{3}=\frac{\text{tr}(\Sigma_{1})+\text{tr}(\Sigma_{2})+\|\mu_{1}-\mu_{2}\|^{2}_{2}}{p}.
\]

As shown in Section \ref{secB.1} in the Appendix, we have the following second-order Taylor expansions under Assumption \ref{assumption5}:
\begin{align*}
    f(p^{-1}\|X_{i_1}-X_{i_2}\|_{2}^2)&=f(\tau_1)+f^{(1)}(\tau_1)\widetilde{X}_{i_1,i_2}+c_{2,\tau_1}(\widetilde{X}_{i_1,i_2})\widetilde{X}_{i_1,i_2}^{2},\\
    f(p^{-1}\|Y_{j_1}-Y_{j_2}\|_{2}^2)&=f(\tau_2)+f^{(1)}(\tau_2)\widetilde{Y}_{j_1,j_2}+c_{2,\tau_2}(\widetilde{Y}_{j_1,j_2})\widetilde{Y}_{j_1,j_2}^{2},\\
    f(p^{-1}\|X_{i}-Y_{j}\|_{2}^2)&=f(\tau_3)+f^{(1)}(\tau_3)\widetilde{Z}_{i,j}+c_{2,\tau_3}(\widetilde{Z}_{i,j})\widetilde{Z}_{i,j}^{2},
\end{align*}
with $\widetilde{X}_{i_1,i_2}=p^{-1}\|X_{i_{1}}-X_{i_{2}}\|_{2}^{2}-\tau_1$, $\widetilde{Y}_{j_1,j_2}=p^{-1}\|Y_{j_{1}}-Y_{j_{2}}\|_{2}^{2}-\tau_2$ and $\widetilde{Z}_{i,j}=p^{-1}\|X_{i}-Y_{j}\|_{2}^{2}-\tau_3$, where $c_{2,\tau_1}(\cdot)$, $c_{2,\tau_2}(\cdot)$ and $c_{2,\tau_3}(\cdot)$ are some bounded functions regardless of the underlying distributions $P_X$ and $P_Y$. These expansions are built on Lemma \ref{lemma-expansion} in the Appendix which generalizes Lemma 7 in \citet{gao2021asymptotic}. 

Plugging the above expansions into the unbiased MMD estimator (\ref{eq-mmds}), we obtain the following
\begin{equation}\label{MDD-expan}
    \text{MMD}_{n,m}^{2}=\Delta_{0}+\Delta_{1}+\widetilde{\Delta}_{2},
\end{equation}
where
\begin{align}
\Delta_{0}&=f(\tau_{1})+f(\tau_{2})-2f(\tau_{3}),\label{delta0-2} \\
\Delta_{1}&=\Delta_{1,1}+\Delta_{1,2}+\Delta_{1,3}, \nonumber \\
\widetilde{\Delta}_{2}&=\frac{1}{n(n-1)}\sum_{i_{1}\neq i_{2}}c_{2,\tau_1}(\widetilde{X}_{i_1,i_2})\widetilde{X}_{i_1,i_2}^{2}+\frac{1}{m(m-1)}\sum_{j_{1}\neq j_{2}}c_{2,\tau_2}(\widetilde{Y}_{j_1,j_2})\widetilde{Y}_{j_1,j_2}^{2}-\frac{2}{nm}\sum_{i,j}c_{2,\tau_3}(\widetilde{Z}_{i,j})\widetilde{Z}_{i,j}^{2}, \nonumber
\end{align}
As can be seen from the decomposition of $\Delta_{1}$, $\Delta_{1,2}$ only captures the mean differences, and $\Delta_{1,3}$ quantifies the discrepancy between both the means and traces of the covariance matrices after the nonlinear transformation $f^{(1)}$. Similar to $\Delta_{1,3}$, $\Delta_0$ quantifies the differences between the means and traces of the covariance matrices after the nonlinear transformation $f$. These three quantities vanish under the null hypothesis, while $\Delta_{1,1}$ does not. The remainder term $\widetilde{\Delta}_{2}$ encodes information of higher-order moment discrepancies. 
\begin{align*}
\Delta_{1,1}&=-\frac{2}{p}\bigg\{\frac{f^{(1)}(\tau_{1})}{n(n-1)}\sum_{i_{1}\neq i_{2}}(X_{i_{1}}-\mu_{1})^\top(X_{i_{2}}-\mu_{1})+\frac{f^{(1)}(\tau_{2})}{m(m-1)}\sum_{j_{1}\neq j_{2}}(Y_{j_{1}}-\mu_{2})^\top(Y_{j_{2}}-\mu_{2})\\
&\quad-\frac{2}{nm}f^{(1)}(\tau_{3})\sum_{i,j}(X_{i}-\mu_{1})^\top (Y_{j}-\mu_{2})\bigg\},\\
\Delta_{1,2}&=-\frac{4}{p}f^{(1)}(\tau_{3})\bigg\{\frac{1}{n}\sum_{i}(X_{i}-\mu_{1})^\top(\mu_{1}-\mu_{2})+\frac{1}{m}\sum_{j}(Y_{j}-\mu_{2})^\top (\mu_{2}-\mu_{1})\bigg\},\\
\Delta_{1,3}&=\frac{2}{p}\bigg[\{f^{(1)}(\tau_{1})-f^{(1)}(\tau_{3})\}\frac{1}{n}\sum_{i}\|X_{i}-\mu_{1}\|_{2}^{2}+\{f^{(1)}(\tau_{2})-f^{(1)}(\tau_{3})\}\frac{1}{m}\sum_{j}\|Y_{j}-\mu_{2}\|_{2}^{2}\bigg]\\
&\quad-2p^{-1}[\{f^{(1)}(\tau_{1})-f^{(1)}(\tau_{3})\}\text{tr}(\Sigma_{1})+\{f^{(1)}(\tau_{2})-f^{(1)}(\tau_{3})\}\text{tr}(\Sigma_{2})].
\end{align*}

Our analysis in Sections \ref{sec3.2}--\ref{sec3.3} below involves two major steps: 
1) showing the asymptotic normality of $\Delta_1/\surd{\text{var}(\Delta_1)}$ under the null, local and fixed alternatives;
2) proving that $(\widetilde{\Delta}_{2}-T_1)/\surd{\text{var}(\Delta_1)}=o_p(1)$ for some nonrandom quantity $T_1$ defined in (\ref{T1-2}) below. For the result in the second step to hold, we require $N$ to grow slower than certain polynomial order of $p$. When $N$ grows at a faster rate, we need to look into the higher-order terms of $\text{MMD}_{n,m}^2$. To this end, we shall consider the $l$th order Taylor expansions of $\text{MMD}_{n,m}^2$ for any $l\geq 3$ in Section \ref{sec3.4}.

\subsection{Non-null CLTs based on second-order Taylor series}\label{sec3.2}
The goal here is to derive the CLTs under the local and fixed alternatives based on the second-order Taylor expansion in (\ref{MDD-expan}). To begin with, we deal with the first-order term $\Delta_{1}$. Assume one of the following two conditions holds
\begin{assumption}\label{assumption6}
$(\mu_{1}-\mu_{2})^\top\Sigma_{i}(\mu_{1}-\mu_{2})=o(N^{-1}p)$ and $|f^{(1)}(\tau_{i})-f^{(1)}(\tau_{3})|=o(N^{-1/2})$ for $i=$1 and 2, 
\end{assumption}
\begin{assumption}\label{assumption7}
$(\mu_{1}-\mu_{2})^\top\Sigma_{i}(\mu_{1}-\mu_{2})=\omega(N^{-1}p)$ or $|f^{(1)}(\tau_{i})-f^{(1)}(\tau_{3})|=\omega(N^{-1/2})$ for $i=$1 or 2,
\end{assumption}
Then the asymptotic normality of the first-order term $\Delta_{1}$ can be established. 
\begin{lemma}
\label{lemma3.1}
Under Assumptions \ref{assumption1}--\ref{assumption4}, and either \ref{assumption6} or \ref{assumption7},  
\[
\frac{\Delta_{1}}{\surd{\textup{var}(\Delta_{1})}}\rightarrow N(0,1)\text{ in distribution, as }N\rightarrow\infty\text{ and }p\rightarrow\infty.
\]
\end{lemma}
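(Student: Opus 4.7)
The plan is to decompose $\Delta_1 = \Delta_{1,1}+\Delta_{1,2}+\Delta_{1,3}$ and determine the leading order of each variance. Routine second-moment computations, using the independence of $\{X_i,Y_j\}$ and the bilinear/linear structure of the three pieces, yield
\begin{align*}
\mathrm{var}(\Delta_{1,1}) &= \Theta\!\left(\{\mathrm{tr}(\Sigma_1^2)+\mathrm{tr}(\Sigma_2^2)+\mathrm{tr}(\Sigma_1\Sigma_2)\}/(p^2N^2)\right)=\Theta(1/(pN^2)),\\
\mathrm{var}(\Delta_{1,2}) &= \Theta\!\left((\mu_1-\mu_2)^\top(\Sigma_1+\Sigma_2)(\mu_1-\mu_2)/(p^2N)\right),\\
\mathrm{var}(\Delta_{1,3}) &= \Theta\!\left(\sum_{i=1}^{2}|f^{(1)}(\tau_i)-f^{(1)}(\tau_3)|^2\,\mathrm{tr}(\Sigma_i^2)/(p^2N)\right),
\end{align*}
where the simplification for $\mathrm{var}(\Delta_{1,1})$ uses Assumption \ref{assumption2}. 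One then verifies that Assumption \ref{assumption6} is precisely the condition $\mathrm{var}(\Delta_{1,2})+\mathrm{var}(\Delta_{1,3}) = o(\mathrm{var}(\Delta_{1,1}))$, while Assumption \ref{assumption7} is its opposite $\mathrm{var}(\Delta_{1,2})+\mathrm{var}(\Delta_{1,3}) = \omega(\mathrm{var}(\Delta_{1,1}))$. Cauchy--Schwarz absorbs all cross-covariances between the three pieces, so $\mathrm{var}(\Delta_1)$ is asymptotically equivalent to the variance of the dominant piece. The problem therefore reduces to a CLT for that piece plus a Slutsky argument for the negligible remainders.

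Under Assumption \ref{assumption6} the dominant piece is $\Delta_{1,1}$, a completely degenerate second-order two-sample $U$-statistic built from the bilinear kernels $(x-\mu)^\top(x'-\mu)$ -- structurally the same as the high-dimensional two-sample statistic of \citet{chen2010two}. I would unify the three blocks by enumerating $W_1,\ldots,W_n:=X_1,\ldots,X_n$ and $W_{n+1},\ldots,W_N:=Y_1,\ldots,Y_m$ and setting
\[
D_k = E[\Delta_{1,1}\mid \mathcal{F}_k]-E[\Delta_{1,1}\mid \mathcal{F}_{k-1}],\qquad \mathcal{F}_k=\sigma(W_1,\ldots,W_k),
\]
so that $\Delta_{1,1}=\sum_{k=2}^{N}D_k$ is a martingale. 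Each $D_k$ is a bilinear form between the fresh $W_k$ and the running partial-sum average of the preceding $W_\ell$'s, with coefficients determined by whether $W_k$ and $W_\ell$ belong to the same sample. The Heyde--Brown martingale CLT then requires: (i) $\sum_k E[D_k^2\mid \mathcal{F}_{k-1}]/\mathrm{var}(\Delta_{1,1})\to 1$ in probability, which reduces under Assumption \ref{assumption2} to the trace bound $\mathrm{tr}\{(\Sigma_i\Sigma_j)^2\}\le \|\Sigma_i\|_{\mathrm{op}}\|\Sigma_j\|_{\mathrm{op}}\,\mathrm{tr}(\Sigma_i\Sigma_j)=o([\mathrm{tr}(\Sigma_i\Sigma_j)]^2)$; and (ii) a Lyapunov-type control $\sum_k E[D_k^4]/[\mathrm{var}(\Delta_{1,1})]^2\to 0$, which follows from expanding fourth moments and applying the uniform eighth-moment bound together with the coordinate-wise independence of $U_i,V_j$ stipulated in Assumption \ref{assumption1}.

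Under Assumption \ref{assumption7} the dominant piece is $\Delta_{1,2}+\Delta_{1,3}$, which is already a sum of $N$ independent centered triangular-array terms built from the linear statistics $(X_i-\mu_1)^\top(\mu_1-\mu_2)$, $(Y_j-\mu_2)^\top(\mu_2-\mu_1)$, $\|X_i-\mu_1\|_2^2-\mathrm{tr}(\Sigma_1)$ and $\|Y_j-\mu_2\|_2^2-\mathrm{tr}(\Sigma_2)$. The Lindeberg--Feller CLT applies once the Lindeberg condition is verified via a fourth-moment truncation, which in turn reduces to showing that each summand contributes an $o(1)$ fraction of the total variance; this is exactly the content of Assumption \ref{assumption7}, combined with the uniform eighth-moment bound and $\|\Sigma_i\|_{\mathrm{op}}\le K$. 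The $\Delta_{1,1}$ contribution is absorbed by Slutsky.

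The main obstacle is the degenerate $U$-statistic CLT in the first regime: in the growing-dimension limit, one must show that the conditional variance of the martingale differences concentrates sharply about $\mathrm{var}(\Delta_{1,1})$, which forces one to track four-fold products of traces of $\Sigma_1,\Sigma_2,\Sigma_1\Sigma_2$ and bound them using only the operator-norm control of Assumption \ref{assumption2}; the remaining steps are moment-order bookkeeping that are routinely justified by Assumptions \ref{assumption1}--\ref{assumption2} and \ref{assumption4}.
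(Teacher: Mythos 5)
Your proposal follows essentially the same route as the paper: the same three-term decomposition, the same variance comparison identifying Assumption 6 (resp. 7) with the regime where $\Delta_{1,1}$ (resp. $\Delta_{1,2}+\Delta_{1,3}$) dominates, a martingale CLT in the Chen--Qin style for the degenerate bilinear part with the conditional-variance concentration controlled via operator-norm trace bounds, and a Lindeberg argument for the independent sum under the fixed alternative. The paper simply delegates the martingale details to Section 6.2 of \citet{chen2010two} with a modified kernel $\phi_{ij}$, which is exactly the construction you sketch.
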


As shown in the proof, under Assumption \ref{assumption6}, we have $\text{var}(\Delta_{1,2})=o(\text{var}(\Delta_{1,1}))$ and $\text{var}(\Delta_{1,3})=o(\text{var}(\Delta_{1,1}))$, which implies that $(\Delta_{1,2}+\Delta_{1,3})/\surd{\text{var}(\Delta_{1})}=o_{p}(1)$. In this case, the discrepancies between $\mu_1$ and $\mu_2$ and $\text{tr}(\Sigma_1)$ and $\text{tr}(\Sigma_2)$ are small enough so that $\Delta_{1,1}$ dominates the other two terms. Thus, we view Assumption \ref{assumption6} as a local alternative. On the contrary, Assumption \ref{assumption7} requires $\mu_1$ and $\mu_2$ and $\text{tr}(\Sigma_1)$ and $\text{tr}(\Sigma_2)$ to be far apart so as either $\Delta_{1,2}$ or $\Delta_{1,3}$ dominates $\Delta_{1,1}$, which can be regarded as the fixed alternative. 

Next we analyze the second-order term $\widetilde{\Delta}_{2}$. We work with the variance and the mean of $\widetilde{\Delta}_{2}$ separately. If we define
\begin{equation}\label{T1-2}
\begin{split}
    T_{1}=(2p^{2})^{-1}[&f^{(2)}(\tau_{1})E\{(\|X_{1}-X_{2}\|_{2}^{2}-p\tau_{1})^{2}\}+f^{(2)}(\tau_{2})E\{(\|Y_{1}-Y_{2}\|_{2}^{2}-p\tau_{2})^{2}\}
    \\&-2f^{(2)}(\tau_{3})E\{(\|X_{1}-Y_{1}\|_{2}^{2}-p\tau_{3})^{2}\}]
\end{split}
\end{equation}
then under Assumptions \ref{assumption1}--\ref{assumption5}, we can show that
\begin{align*}
\text{var}(\widetilde{\Delta}_{2})=O(N^{-1}p^{-2}),\quad E(\widetilde{\Delta}_{2})-T_{1}=O(p^{-3/2}).
\end{align*}
We refer the readers to Section \ref{secB.2} in the Appendix for the details of the calculations of the variance and the mean of $\widetilde{\Delta}_{2}$. Utilizing the above results, we have the following theorem which establishes the asymptotic normality of $\text{MMD}_{n,m}^{2}$ under the local alternative. 

\begin{theorem}
\label{theorem3.2}
Under Assumptions \ref{assumption1}--\ref{assumption6}, and $N=o(p)$,  
\[
\frac{\textup{MMD}_{n,m}^{2}-\Delta_{0}-T_{1}}{\surd{\textup{var}({\color{red}\Delta_{1,1}})}}\rightarrow N(0,1)\text{ in distribution, as }N\rightarrow\infty\text{ and }p\rightarrow\infty.
\]
\end{theorem}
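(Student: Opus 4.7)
The plan is to use the decomposition $\text{MMD}_{n,m}^{2}=\Delta_{0}+\Delta_{1}+\widetilde{\Delta}_{2}$ established in (\ref{MDD-expan}), so that
\[
\frac{\text{MMD}_{n,m}^{2}-\Delta_{0}-T_{1}}{\surd{\text{var}(\Delta_{1})}}
=\frac{\Delta_{1}}{\surd{\text{var}(\Delta_{1})}}
+\frac{\widetilde{\Delta}_{2}-T_{1}}{\surd{\text{var}(\Delta_{1})}}.
\]
Lemma \ref{lemma3.1} already gives the first summand a standard normal limit, so by Slutsky's theorem it suffices to show that the second summand is $o_{p}(1)$ under Assumption \ref{assumption6} and $N=o(p)$.

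Next, I would pin down the exact rate of $\text{var}(\Delta_{1})$ under the local alternative. Writing $\Delta_{1}=\Delta_{1,1}+\Delta_{1,2}+\Delta_{1,3}$, the term $\Delta_{1,1}$ is a (completely degenerate) centered double sum whose variance I would compute directly using Assumption \ref{assumption1} and Assumption \ref{assumption2}: the cross terms produce $\|\Sigma_{i}\|_{F}^{2}$ factors of order $p$, combined with the $p^{-2}$ normalization and the $n^{-2}$ (or $m^{-2}$) factor from the U-statistic averaging, giving $\text{var}(\Delta_{1,1})=\Theta(N^{-2}p^{-1})$. Assumption \ref{assumption6} is exactly what makes $\text{var}(\Delta_{1,2})$ and $\text{var}(\Delta_{1,3})$ strictly smaller, so $\surd{\text{var}(\Delta_{1})}=\Theta(N^{-1}p^{-1/2})$ and the discussion following Lemma \ref{lemma3.1} confirms $\Delta_{1,2}+\Delta_{1,3}$ is negligible relative to $\Delta_{1,1}$.

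Then I would combine the two bounds already stated in the excerpt, namely $\text{var}(\widetilde{\Delta}_{2})=O(N^{-1}p^{-2})$ and $E(\widetilde{\Delta}_{2})-T_{1}=O(p^{-3/2})$, via the decomposition
\[
\widetilde{\Delta}_{2}-T_{1}=\{\widetilde{\Delta}_{2}-E(\widetilde{\Delta}_{2})\}+\{E(\widetilde{\Delta}_{2})-T_{1}\}=O_{p}(N^{-1/2}p^{-1})+O(p^{-3/2}),
\]
using Chebyshev's inequality for the stochastic term. Dividing by $\surd{\text{var}(\Delta_{1})}=\Theta(N^{-1}p^{-1/2})$ yields
\[
\frac{\widetilde{\Delta}_{2}-T_{1}}{\surd{\text{var}(\Delta_{1})}}=O_{p}\bigl(N^{1/2}p^{-1/2}\bigr)+O\bigl(Np^{-1}\bigr),
\]
both of which are $o(1)$ precisely because $N=o(p)$. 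Applying Slutsky then finishes the argument.

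The main obstacle, to my mind, is not any single step but making sure the rate accounting for $\text{var}(\Delta_{1})$ is tight enough: if $\surd{\text{var}(\Delta_{1})}$ is overestimated, the remainder becomes easier to control but Lemma \ref{lemma3.1} loses its meaning (one would have divided by the wrong scale); if underestimated, the negligibility of $\widetilde{\Delta}_{2}-T_{1}$ could fail. Getting the $\Theta(N^{-2}p^{-1})$ rate for $\text{var}(\Delta_{1,1})$ requires careful bookkeeping of the cross terms from the three pieces inside $\Delta_{1,1}$ (the $X$-$X$, $Y$-$Y$, and $X$-$Y$ parts), and checking that the $f^{(1)}(\tau_{i})$ coefficients do not conspire to cause a leading-order cancellation in $\text{var}(\Delta_{1})$. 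Everything else is a routine application of the already stated moment and variance bounds.
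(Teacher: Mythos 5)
Your proposal is correct and follows essentially the same route as the paper: decompose $\textup{MMD}_{n,m}^{2}=\Delta_{0}+\Delta_{1}+\widetilde{\Delta}_{2}$, invoke Lemma \ref{lemma3.1} for $\Delta_{1}/\surd{\textup{var}(\Delta_{1})}$, and use $\textup{var}(\Delta_{1})=\textup{var}(\Delta_{1,1})\{1+o(1)\}=\Theta(N^{-2}p^{-1})$ together with $\textup{var}(\widetilde{\Delta}_{2})=O(N^{-1}p^{-2})$ and $E(\widetilde{\Delta}_{2})-T_{1}=O(p^{-3/2})$ to show $(\widetilde{\Delta}_{2}-T_{1})/\surd{\textup{var}(\Delta_{1})}=o_{p}(1)$ exactly when $N=o(p)$. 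Your worry about a leading-order cancellation in $\textup{var}(\Delta_{1,1})$ does not materialize, since its three contributions are nonnegative squares multiplying $\textup{tr}(\Sigma_{1}^{2})$, $\textup{tr}(\Sigma_{2}^{2})$ and $\textup{tr}(\Sigma_{1}\Sigma_{2})$.
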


One can show that $T_1=0$ if $\mu_1=\mu_2$ and $\Sigma_1=\Sigma_2$ (see Section \ref{secB.5} in the Appendix for the proof of a more general result). Hence, $T_1$ characterizes the discrepancy between the means and the covariance matrices, not only the traces. Since $T_{1}=O(p^{-1})$ under Assumptions \ref{assumption1}--\ref{assumption3}, we immediately have the following corollary.
\begin{corollary}
\label{corollary3.3}
Under Assumptions \ref{assumption1}--\ref{assumption6}, and $N=o(\surd{p})$,  
\[
\frac{\textup{MMD}_{n,m}^{2}-\Delta_{0}}{\surd{\textup{var}({\color{red}\Delta_{1,1}})}}\rightarrow N(0,1)\text{ in distribution, as }N\rightarrow\infty\text{ and }p\rightarrow\infty.
\]
\end{corollary}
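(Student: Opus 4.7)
The corollary is an immediate consequence of Theorem \ref{theorem3.2} once we verify that, under the stronger rate $N=o(\surd{p})$, the centering correction $T_{1}$ becomes negligible relative to the standard deviation $\surd{\textup{var}(\Delta_{1})}$. Thus the plan is a short Slutsky-style argument comparing the magnitudes of $T_{1}$ and $\surd{\textup{var}(\Delta_{1})}$.

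First I would bound $T_{1}$. Using the fact that for $i=1,2$, $E\|X_{1}-X_{2}\|_{2}^{2}=2\,\text{tr}(\Sigma_{1})=p\tau_{1}$ and analogously for $Y$ and the cross term $\|X_{1}-Y_{1}\|_{2}^{2}$, each expectation appearing in (\ref{T1-2}) equals the variance of a quadratic form of the type $(U_{1}-U_{2})^{\top}\Sigma_{i}(U_{1}-U_{2})$ (or the mixed analog). Under Assumption \ref{assumption1} (independent entries with bounded eighth moment) and Assumption \ref{assumption2} (bounded operator norm of $\Sigma_{i}$), a standard quadratic-form variance computation gives each such variance as $O(\text{tr}(\Sigma_{i}^{2}))+O(\text{tr}(\Sigma_{i})^{2}\cdot 0)$ and hence $O(p)$. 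Combined with the boundedness of $f^{(2)}$ from Assumption \ref{assumption5}, this yields $T_{1}=O(p^{-1})$, as already asserted in the text preceding the corollary.

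Next I would bound $\surd{\textup{var}(\Delta_{1})}$ from below. Under the local alternative Assumption \ref{assumption6}, the proof of Lemma \ref{lemma3.1} shows that $\Delta_{1,1}$ is the dominant component of $\Delta_{1}$, so $\textup{var}(\Delta_{1})=\Theta(\textup{var}(\Delta_{1,1}))$. Since $\Delta_{1,1}$ is a degenerate U-statistic with kernel $(X_{i_{1}}-\mu_{1})^{\top}(X_{i_{2}}-\mu_{1})/p$ (plus analogous terms), orthogonality of the summands gives $\textup{var}(\Delta_{1,1})=\Theta(N^{-2}p^{-2}\,\text{tr}(\Sigma_{i}^{2}))=\Theta(N^{-2}p^{-1})$ under Assumption \ref{assumption2}. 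Hence $\surd{\textup{var}(\Delta_{1})}=\Theta(N^{-1}p^{-1/2})$.

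Putting the two bounds together,
\[
\frac{T_{1}}{\surd{\textup{var}(\Delta_{1})}}=O\!\left(\frac{p^{-1}}{N^{-1}p^{-1/2}}\right)=O\!\left(\frac{N}{\surd{p}}\right)=o(1)
\]
under $N=o(\surd{p})$. Writing
\[
\frac{\textup{MMD}_{n,m}^{2}-\Delta_{0}}{\surd{\textup{var}(\Delta_{1})}}=\frac{\textup{MMD}_{n,m}^{2}-\Delta_{0}-T_{1}}{\surd{\textup{var}(\Delta_{1})}}+\frac{T_{1}}{\surd{\textup{var}(\Delta_{1})}},
\]
the first summand converges weakly to $N(0,1)$ by Theorem \ref{theorem3.2} (whose hypothesis $N=o(p)$ is implied by $N=o(\surd{p})$), while the second is $o(1)$ by the display above. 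Slutsky's theorem completes the proof. The only non-cosmetic step is the order computation of $T_{1}$, but with the identifications $p\tau_{1}=E\|X_{1}-X_{2}\|_{2}^{2}$ etc., it reduces to the quadratic-form variance bound and so presents no real obstacle; I would expect the author to simply cite it (and the variance lower bound for $\Delta_{1,1}$) from the proof of Theorem \ref{theorem3.2}.
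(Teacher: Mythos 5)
Your proposal is correct and follows the paper's own route exactly: the paper likewise deduces the corollary from Theorem \ref{theorem3.2} by noting that $T_{1}=O(p^{-1})=o(\surd{\textup{var}(\Delta_{1})})$ when $N=o(\surd{p})$, since $\surd{\textup{var}(\Delta_{1})}=\Theta(N^{-1}p^{-1/2})$. Your write-up merely spells out the order computations that the paper cites from the surrounding text.
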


Similarly, under the fixed alternative, we have the following theorem and corollary. 
\begin{theorem}
\label{theorem3.4}
Under Assumptions \ref{assumption1}--\ref{assumption5}, and for $i=1$ or $2$, either $(\mu_{1}-\mu_{2})^{\top}\Sigma_{i}(\mu_{1}-\mu_{2})=\omega(\max\{Np^{-1},N^{-1}p\})$ or $|f^{(1)}(\tau_{i})-f^{(1)}(\tau_{3})|=\omega(\max\{N^{1/2}p^{-1},N^{-1/2}\})$,
\[
\frac{\textup{MMD}_{n,m}^{2}-\Delta_{0}-T_{1}}{\surd{\textup{var}({\color{red}\Delta_{1,2}+\Delta_{1,3}})}}\rightarrow N(0,1)\text{ in distribution, as }N\rightarrow\infty\text{ and }p\rightarrow\infty.
\]
\end{theorem}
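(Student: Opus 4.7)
The plan is to leverage the second-order decomposition (\ref{MDD-expan}) to write $\textup{MMD}_{n,m}^2 - \Delta_0 - T_1 = \Delta_1 + (\widetilde{\Delta}_2 - T_1)$, and then argue (i) $\Delta_1/\surd{\textup{var}(\Delta_1)}\rightarrow N(0,1)$ via Lemma~\ref{lemma3.1}, and (ii) $(\widetilde{\Delta}_2 - T_1)/\surd{\textup{var}(\Delta_1)} = o_p(1)$, before concluding by Slutsky. For (i), I first verify that the hypothesis of Theorem~\ref{theorem3.4} implies Assumption~\ref{assumption7}: since $\omega(\max\{Np^{-1},N^{-1}p\})\subseteq \omega(N^{-1}p)$ and $\omega(\max\{N^{1/2}p^{-1},N^{-1/2}\})\subseteq\omega(N^{-1/2})$, the stronger growth condition trivially implies the weaker one, so Lemma~\ref{lemma3.1} applies.

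The heart of the proof is step (ii). Writing $\widetilde{\Delta}_2 - T_1 = (\widetilde{\Delta}_2 - E\widetilde{\Delta}_2) + (E\widetilde{\Delta}_2 - T_1)$ and invoking the bounds $\textup{var}(\widetilde{\Delta}_2) = O(N^{-1}p^{-2})$ and $|E\widetilde{\Delta}_2 - T_1| = O(p^{-3/2})$ quoted from Section~B.2 of the Appendix, a Chebyshev argument reduces the task to proving $\textup{var}(\Delta_1) = \omega(N^{-1}p^{-2})$ and $\textup{var}(\Delta_1) = \omega(p^{-3})$. By the AM--GM inequality, $\max\{p^{-3},N^{-2}p^{-1}\}\geq\sqrt{p^{-3}\cdot N^{-2}p^{-1}} = N^{-1}p^{-2}$, so both targets are implied by the single lower bound $\textup{var}(\Delta_1) = \omega(\max\{p^{-3}, N^{-2}p^{-1}\})$.

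To supply this variance lower bound, I compute the three components of $\Delta_1$ directly. Using independence and the explicit forms of $\Delta_{1,2}$ and $\Delta_{1,3}$, standard moment calculations yield
\begin{align*}
\textup{var}(\Delta_{1,2}) &\asymp \{f^{(1)}(\tau_3)\}^2\, p^{-2}N^{-1}\sum_{i=1,2}(\mu_1-\mu_2)^\top\Sigma_i(\mu_1-\mu_2),\\
\textup{var}(\Delta_{1,3}) &\asymp p^{-1}N^{-1}\sum_{i=1,2}\{f^{(1)}(\tau_i) - f^{(1)}(\tau_3)\}^2,
\end{align*}
where in the second line I use $\textup{var}(\|X_1-\mu_1\|_2^2) = \Theta(\textup{tr}(\Sigma_1^2))$ together with $\textup{tr}(\Sigma_i^2) = \Theta(p)$, which follows from $\textup{tr}(\Sigma_i^2)\leq \|\Sigma_i\|_{\rm op}\textup{tr}(\Sigma_i)$ and $\textup{tr}(\Sigma_i)^2\leq p\,\textup{tr}(\Sigma_i^2)$ under Assumption~\ref{assumption2}. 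Cross-covariances among $\Delta_{1,1},\Delta_{1,2},\Delta_{1,3}$ can only scale $\textup{var}(\Delta_1)$ by a bounded factor (via Cauchy--Schwarz) and cannot cancel the dominant summand, so the required lower bound on $\textup{var}(\Delta_1)$ follows by plugging in the two branches of the hypothesis: the mean-gap branch forces $\textup{var}(\Delta_{1,2}) = \omega(\max\{p^{-3}, N^{-2}p^{-1}\})$, while the derivative-gap branch forces the same bound on $\textup{var}(\Delta_{1,3})$.

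The main obstacle is precisely this bookkeeping step: unlike Theorem~\ref{theorem3.2}, no relation such as $N=o(p)$ is available, so the $\max$ inside $\omega(\cdot)$ in the hypothesis must simultaneously dominate $p^{-3}$ (the binding regime when $N\gtrsim p$) and $N^{-2}p^{-1}$ (the binding regime when $N\lesssim p$). The doubled thresholds $\max\{Np^{-1},N^{-1}p\}$ and $\max\{N^{1/2}p^{-1},N^{-1/2}\}$ in the hypothesis are calibrated exactly so that, once multiplied by the $p^{-2}N^{-1}$ or $p^{-1}N^{-1}$ prefactor from the variance formulas, they produce $\omega(\max\{p^{-3},N^{-2}p^{-1}\})$ across every relative ordering of $N$ and $p$, which is what closes the argument.
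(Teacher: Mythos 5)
Your proposal is correct and follows essentially the same route as the paper: decompose via (\ref{MDD-expan}), invoke Lemma \ref{lemma3.1} under Assumption \ref{assumption7} (which your hypothesis implies), and kill the remainder by checking $\textup{var}(\widetilde{\Delta}_2)=O(N^{-1}p^{-2})=o(\textup{var}(\Delta_1))$ and $E(\widetilde{\Delta}_2)-T_1=O(p^{-3/2})=o(\surd{\textup{var}(\Delta_1)})$ against the lower bound $\textup{var}(\Delta_1)=\textup{var}(\Delta_{1,2}+\Delta_{1,3})\{1+o(1)\}=\omega(\max\{p^{-3},N^{-2}p^{-1}\})$. Your explicit calibration of the doubled thresholds and the AM--GM step making $N^{-1}p^{-2}$ dominated by $\max\{p^{-3},N^{-2}p^{-1}\}$ are exactly the bookkeeping the paper leaves implicit.
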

\begin{corollary}
\label{corollary3.5}
Under Assumptions \ref{assumption1}--\ref{assumption5}, and for $i=1$ or $2$, either $(\mu_{1}-\mu_{2})^{\top}\Sigma_{i}(\mu_{1}-\mu_{2})=\omega(\max\{N,N^{-1}p\})$ or $|f^{(1)}(\tau_{i})-f^{(1)}(\tau_{3})|=\omega(\max\{N^{1/2}p^{-1/2},N^{-1/2}\})$, 
\[
\frac{\textup{MMD}_{n,m}^{2}-\Delta_{0}}{\surd{\textup{var}({\color{red}\Delta_{1,2}+\Delta_{1,3}})}}\rightarrow N(0,1)\text{ in distribution, as }N\rightarrow\infty\text{ and }p\rightarrow\infty.
\]
\end{corollary}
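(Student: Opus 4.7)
The plan is to deduce Corollary \ref{corollary3.5} directly from Theorem \ref{theorem3.4} via Slutsky's theorem, by establishing that under the strengthened hypotheses the bias $T_1$ is asymptotically negligible relative to $\surd{\textup{var}(\Delta_1)}$. Concretely, the proof has two pieces: (i) verify that the corollary's hypotheses imply those of Theorem \ref{theorem3.4}, and (ii) show $T_1/\surd{\textup{var}(\Delta_1)}=o(1)$.

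Step (i) is immediate: since $p\geq 1$, we have $N\geq Np^{-1}$ and $N^{1/2}p^{-1/2}\geq N^{1/2}p^{-1}$, so $(\mu_1-\mu_2)^\top\Sigma_i(\mu_1-\mu_2)=\omega(\max\{N,N^{-1}p\})$ implies $\omega(\max\{Np^{-1},N^{-1}p\})$, and $|f^{(1)}(\tau_i)-f^{(1)}(\tau_3)|=\omega(\max\{N^{1/2}p^{-1/2},N^{-1/2}\})$ implies $\omega(\max\{N^{1/2}p^{-1},N^{-1/2}\})$. Hence Theorem \ref{theorem3.4} applies and yields $(\textup{MMD}_{n,m}^{2}-\Delta_{0}-T_{1})/\surd{\textup{var}(\Delta_{1})}\to N(0,1)$ in distribution.

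For step (ii), the paper already records $T_1=O(p^{-1})$ under Assumptions \ref{assumption1}--\ref{assumption3}, so it suffices to show $\textup{var}(\Delta_1)=\omega(p^{-2})$. I would decompose $\Delta_1=\Delta_{1,1}+\Delta_{1,2}+\Delta_{1,3}$ and compute the three variances directly: standard U-statistic moment bounds give $\textup{var}(\Delta_{1,1})=O(N^{-2}p^{-1})$; the linearity of $\Delta_{1,2}$ in the centered observations, together with $\textup{var}(X_i)=\Sigma_1$, gives $\textup{var}(\Delta_{1,2})=\Theta(p^{-2}N^{-1}(\mu_1-\mu_2)^\top\Sigma_i(\mu_1-\mu_2))$ for the relevant index $i$; and $\textup{var}(\|X_1-\mu_1\|_2^2)=O(\textup{tr}(\Sigma_1^2))=O(p)$ combined with the prefactor yields $\textup{var}(\Delta_{1,3})=\Theta(p^{-1}N^{-1}|f^{(1)}(\tau_i)-f^{(1)}(\tau_3)|^2)$. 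The corollary's first alternative $(\mu_1-\mu_2)^\top\Sigma_i(\mu_1-\mu_2)=\omega(N)$ forces $\textup{var}(\Delta_{1,2})=\omega(p^{-2})$; its second alternative $|f^{(1)}(\tau_i)-f^{(1)}(\tau_3)|=\omega(N^{1/2}p^{-1/2})$ forces $\textup{var}(\Delta_{1,3})=\omega(p^{-2})$.

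The main obstacle is that the three pieces are correlated, so a priori cross-covariance cancellation could destroy the lower bound on $\textup{var}(\Delta_1)$. However, the corollary's hypotheses are strictly stronger than those of Theorem \ref{theorem3.4}, under which the dominating single-component variance exceeds the others in order; Cauchy--Schwarz $|\textup{cov}(A,B)|\leq\surd{\textup{var}(A)\textup{var}(B)}$ then absorbs the sub-dominant contributions and gives $\textup{var}(\Delta_1)=(1+o(1))\max\{\textup{var}(\Delta_{1,2}),\textup{var}(\Delta_{1,3})\}=\omega(p^{-2})$. Combined with $T_1=O(p^{-1})$ and Slutsky's theorem applied to the conclusion of Theorem \ref{theorem3.4}, this delivers the stated CLT.
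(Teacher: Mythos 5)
Your proposal takes essentially the same route as the paper: the paper's entire proof of Corollary \ref{corollary3.5} (shared with Corollary \ref{corollary3.3}) is the observation that $T_1=O(p^{-1})=o(\surd{\textup{var}(\Delta_1)})$ once the hypotheses force $\surd{\textup{var}(\Delta_1)}=\omega(p^{-1})$, which is exactly your steps (i)--(ii), and your order computations for $\textup{var}(\Delta_{1,2})$ and $\textup{var}(\Delta_{1,3})$ agree with the explicit formulas in the proof of Lemma \ref{lemma3.1}. One caveat on the extra detail you supply: the ``either/or'' hypothesis does not guarantee that one of $\textup{var}(\Delta_{1,2})$, $\textup{var}(\Delta_{1,3})$ dominates the other in order, so your Cauchy--Schwarz ``absorption'' step does not by itself exclude cancellation between these two correlated terms when they are of comparable size; but the needed lower bound $\textup{var}(\Delta_1)=\textup{var}(\Delta_{1,2}+\Delta_{1,3})\{1+o(1)\}=\omega(\max\{p^{-3},N^{-2}p^{-1}\})$ is already asserted without further justification in the paper's proof of Theorem \ref{theorem3.4}, so the corollary inherits it and your argument is no less rigorous than the paper's own.
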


Assumption \ref{assumption7} is already included in Theorem \ref{theorem3.4} and Corollary \ref{corollary3.5}. If we further assume either $(\mu_{1}-\mu_{2})^\top\Sigma_{i}(\mu_{1}-\mu_{2})=\Theta(N^{-1}p^{a})$ for some $a>1$ or $|f^{(1)}(\tau_{i})-f^{(1)}(\tau_{3})|=\Theta(N^{-1/2}p^{b})$ for some $b>0$, then the condition in Theorem \ref{theorem3.4} becomes either $N=o(p^{(a+1)/2})$ or $N=o(p^{b+1})$, and the condition in Corollary \ref{corollary3.5} becomes either $N=o(p^{a/2})$ or $N=o(p^{b+1/2})$.

\subsection{Null CLT and the test statistic}\label{sec3.3}
Although the asymptotic normality of $\text{MMD}_{n,m}^{2}$ under the null $H_0:P_X=P_Y$ can be seen as a special case of Theorem \ref{theorem3.2}, the analysis under the null can indeed be refined to obtain a sharper rate of the remainder term $\widetilde{\Delta}_{2}$. As a consequence, the restriction between the sample size $N$ and the data dimension $p$ can be dropped. To be specific, under $H_{0}$, we have
\[
\Delta_{0}=0,\quad
\Delta_{1,2}=\Delta_{1,3}=0\implies\Delta_{1}=\Delta_{1,1}.
\]
Assumption \ref{assumption6} is trivially satisfied, and Lemma \ref{lemma3.1} holds under Assumptions \ref{assumption1}--\ref{assumption4}. As shown in Section \ref{sec3.2}, the restriction between $N$ and $p$ in Theorem \ref{theorem3.2} arises from the variance and the mean of $\widetilde{\Delta}_{2}$. However, as $X$ and $Y$ have the same distribution under $H_{0}$, 
\[
E(\widetilde{\Delta}_{2})=E(\text{MMD}_{n,m}^{2})-\Delta_{0}-E(\Delta_{1})=0,
\]
where we have used the fact that $E(\text{MMD}_{n,m}^{2})=\text{MMD}^2(P_X,P_Y)=0.$ Moreover, under $H_{0}$, it can be proved that
\[
\text{var}(\widetilde{\Delta}_{2})=O(N^{-2}p^{-2}),
\]
the order of which is smaller than that in the non-null setting. See Section \ref{secB.3} in the Appendix. Combining these observations, we obtain the following theorem under $H_0$.

\begin{theorem}
\label{theorem3.6}
Under $H_0:P_X=P_Y$, and Assumptions \ref{assumption1}--\ref{assumption5}, 
\[
\frac{\textup{MMD}_{n,m}^{2}}{\surd{\textup{var}({\color{red}\Delta_{1,1}})}}\rightarrow N(0,1)\text{ in distribution, as }N\rightarrow\infty\text{ and }p\rightarrow\infty.
\]
\end{theorem}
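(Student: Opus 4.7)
The plan is to exploit the decomposition (\ref{MDD-expan}) combined with the simplifications that arise under $H_0$. Since $P_X=P_Y$ forces $\tau_1=\tau_2=\tau_3$, $\mu_1=\mu_2$, and $\text{tr}(\Sigma_1)=\text{tr}(\Sigma_2)$, inspection of the definitions of $\Delta_0$, $\Delta_{1,2}$, and $\Delta_{1,3}$ shows that all three vanish identically. Hence the decomposition collapses to $\text{MMD}_{n,m}^2=\Delta_{1,1}+\widetilde{\Delta}_2$. Assumption \ref{assumption6} holds trivially (both sides of its inequalities are zero), so Lemma \ref{lemma3.1} already delivers $\Delta_{1,1}/\surd{\text{var}(\Delta_1)}\to N(0,1)$. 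The remaining task is to show $\widetilde{\Delta}_2/\surd{\text{var}(\Delta_1)}=o_p(1)$ and conclude by Slutsky.

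For the denominator, I would record from the proof of Lemma \ref{lemma3.1} the standard order $\text{var}(\Delta_1)=\text{var}(\Delta_{1,1})=\Theta(N^{-2}p^{-1})$, which comes from the fact that each summand in $\Delta_{1,1}$ is $p^{-1}$ times a centered inner product of magnitude $O_p(p^{1/2})$, paired off as a two-sample U-statistic. The mean of the numerator is painless: the unbiasedness of the MMD estimator gives $E(\text{MMD}_{n,m}^2)=\text{MMD}^2(P_X,P_Y)=0$, while $E(\Delta_0)=0$ and $E(\Delta_1)=0$ by construction of the Taylor expansion, so $E(\widetilde{\Delta}_2)=0$ exactly under $H_0$.

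The main obstacle is the sharpened variance bound $\text{var}(\widetilde{\Delta}_2)=O(N^{-2}p^{-2})$, an $N^{-1}$ improvement over the non-null rate. I would write $\widetilde{\Delta}_2=A_n+B_m-2C_{n,m}$ for the three U-statistics in $\widetilde{\Delta}_2$, apply a Hoeffding decomposition to each, and observe that under $H_0$ the single-index (non-degenerate) projections of $A_n$, $B_m$, and $C_{n,m}$ are all equal in distribution to the same conditional expectation $\psi(X_1)=E\{c_{2,\tau_1}(\widetilde{X}_{1,2})\widetilde{X}_{1,2}^2\mid X_1\}-E\{c_{2,\tau_1}(\widetilde{X}_{1,2})\widetilde{X}_{1,2}^2\}$, so the signed combination $+1+1-2$ annihilates the leading $O(N^{-1})$ variance contribution. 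What survives is the purely degenerate part of each U-statistic, whose variance is $O(N^{-2})$ per piece times the per-kernel-value variance $O(p^{-2})$ coming from $\widetilde{X}_{1,2}=O_p(p^{-1/2})$ combined with the uniform boundedness of $c_{2,\tau}(\cdot)$ guaranteed by Assumption \ref{assumption5}. Pasting these pieces together, together with bounding the covariances across $A_n$, $B_m$, $C_{n,m}$ by the same cancellation mechanism, yields the claimed bound.

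Given $E(\widetilde{\Delta}_2)=0$ and $\text{var}(\widetilde{\Delta}_2)/\text{var}(\Delta_1)=O(p^{-1})=o(1)$, Chebyshev's inequality gives $\widetilde{\Delta}_2/\surd{\text{var}(\Delta_1)}=o_p(1)$, and Slutsky's theorem combined with Lemma \ref{lemma3.1} completes the proof. The key payoff is that, in contrast to Theorem \ref{theorem3.2}, no condition linking $N$ and $p$ is invoked: both the exact centering $E(\widetilde{\Delta}_2)=0$ and the extra $N^{-1}$ factor in the variance rely only on the distributional symmetry $P_X=P_Y$, not on any rate assumption.
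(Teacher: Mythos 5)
Your proposal is correct and follows essentially the same route as the paper: under $H_0$ the decomposition reduces to $\Delta_{1,1}+\widetilde{\Delta}_2$, Lemma \ref{lemma3.1} handles the leading term, $E(\widetilde{\Delta}_2)=0$ follows from unbiasedness, and the sharpened bound $\textup{var}(\widetilde{\Delta}_2)=O(N^{-2}p^{-2})$ comes from the exact cancellation of the non-degenerate parts in the $+1+1-2$ combination. The only cosmetic difference is that you organize that cancellation via the Hoeffding decomposition, whereas the paper's Section B.3 reaches the identical conclusion by direct bookkeeping of the covariances $Q_1$ and $Q_2$ among $P_1,P_2,P_3$.
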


In order to formulate a testing procedure based on Theorem \ref{theorem3.6}, $\text{var}(\Delta_{1,1})$ needs to be estimated. As shown in the proof of Lemma \ref{lemma3.1}, 
\[
\text{var}(\Delta_{1,1})=\frac{8}{p^{2}}\bigg[\frac{1}{n(n-1)}\{f^{(1)}(\tau_{1})\}^{2}\text{tr}(\Sigma_{1}^{2})+\frac{1}{m(m-1)}\{f^{(1)}(\tau_{2})\}^{2}\text{tr}(\Sigma_{2}^{2})+\frac{2}{nm}\{f^{(1)}(\tau_{3})\}^{2}\text{tr}(\Sigma_{1}\Sigma_{2})\bigg],
\]
which motivates the following plug-in estimator
\[
\widehat{\text{var}({\color{red}\Delta_{1,1}})}=\frac{8}{p^{2}}\bigg[\frac{1}{n(n-1)}\{f^{(1)}(\widehat{\tau_{1}})\}^{2}\widehat{\text{tr}(\Sigma_{1}^{2})}+\frac{1}{m(m-1)}\{f^{(1)}(\widehat{\tau_{2}})\}^{2}\widehat{\text{tr}(\Sigma_{2}^{2})}+\frac{2}{nm}\{f^{(1)}(\widehat{\tau_{3}})\}^{2}\widehat{\text{tr}(\Sigma_{1}\Sigma_{2})}\bigg],
\]
where
\begin{align*}
    \widehat{\tau_{1}}&=\frac{2}{p}\times\frac{1}{n}\sum_{i}X_{i}^{\top}(X_{i}-\bar{X}_{-i}),\quad \widehat{\tau_{2}}=\frac{2}{p}\times\frac{1}{m}\sum_{j}Y_{j}^{\top}(Y_{j}-\bar{Y}_{-j}),\\
    \widehat{\tau_{3}}=\frac{1}{p}\times\bigg\{&\frac{1}{n}\sum_{i}X_{i}^{\top}(X_{i}-\bar{X}_{-i})+\frac{1}{m}\sum_{j}Y_{j}^{\top}(Y_{j}-\bar{Y}_{-j})\\
    &+\frac{1}{n(n-1)}\sum_{i_{1}\neq i_{2}}X_{i_{1}}^{\top}X_{i_{2}}+\frac{1}{m(m-1)}\sum_{j_{1}\neq j_{2}}Y_{j_{1}}'Y_{j_{2}}-\frac{2}{nm}\sum_{i,j}X_{i}^{\top}Y_{j}\bigg\},
\end{align*}
with $\bar{X}_{-i}$ ($\bar{Y}_{-j}$) being the sample mean excluding $X_{i}$ ($Y_j$). The definitions of $\widehat{\text{tr}(\Sigma_{1}^{2})}$, $\widehat{\text{tr}(\Sigma_{2}^{2})}$ and $\widehat{\text{tr}(\Sigma_{1}\Sigma_{2})}$ can be found on page 7 of \citet{chen2010two}. The above variance estimator is designed to estimate the leading term of the variance of the sample MMD under the {\color{red}null}. It appears to be new and is different from the ones recently proposed in the literature, see, e.g., \citet{chakraborty2021new} and \citet{gao2021two}.

The next theorem establishes the ratio consistency of {\color{red}$\widehat{\textup{var}(\Delta_{1,1})}$, which holds under both the null and the alternative}. 
\begin{theorem}
\label{theorem3.7}
Under Assumptions \ref{assumption1}--{\color{red}\ref{assumption5}}, 
\[
\frac{\widehat{\textup{var}({\color{red}\Delta_{1,1}})}}{\textup{var}({\color{red}\Delta_{1,1}})}\rightarrow 1\text{ in probability, as }N\rightarrow\infty\text{ and }p\rightarrow\infty.
\]
\end{theorem}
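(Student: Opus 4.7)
The plan is to break $\widehat{\text{var}(\Delta_1)}$ and $\text{var}(\Delta_1)$ into their three matched summands and show that each ratio tends to $1$ in probability. Under Assumption \ref{assumption6}, the proof of Lemma \ref{lemma3.1} already yields $\text{var}(\Delta_1) = \text{var}(\Delta_{1,1})\{1+o(1)\}$, so it suffices to work against the three pieces of $\text{var}(\Delta_{1,1})$ displayed in the theorem. Assumption \ref{assumption2} gives $\text{tr}(\Sigma_i^2) = \Theta(p)$ via $\text{tr}(\Sigma_i)^2 \le p\,\text{tr}(\Sigma_i^2) \le p\|\Sigma_i\|_{\rm op}\text{tr}(\Sigma_i)$, and $\text{tr}(\Sigma_1\Sigma_2) = O(p)$ by Cauchy--Schwarz, so all three summands are $O(N^{-2}p^{-1})$ with the first two attaining exactly this order. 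Writing each $\hat V_j = V_j(1 + o_P(1))$ and summing over the three indices then gives $\widehat{\text{var}(\Delta_1)}/\text{var}(\Delta_{1,1}) \xrightarrow{P} 1$, so it suffices to establish $\hat V_j/V_j \xrightarrow{P} 1$ for each $j$.

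For each summand the relative error factors as $\{f^{(1)}(\widehat{\tau_i})\}^2/\{f^{(1)}(\tau_i)\}^2$ times a trace ratio such as $\widehat{\text{tr}(\Sigma_1^2)}/\text{tr}(\Sigma_1^2)$. I would first show $\widehat{\tau_i} \xrightarrow{P} \tau_i$ by elementary moment computation: rewriting $\widehat{\tau_1}$ as the unbiased statistic $\frac{2}{np}\sum_i \|X_i\|_2^2 - \frac{2}{n(n-1)p}\sum_{i\neq k} X_i^\top X_k$, direct calculation gives $E(\widehat{\tau_1}) = \tau_1$, and under the linear model $X_i = \Gamma_1 U_i + \mu_1$ the variance formula for quadratic forms in independent coordinates together with Assumptions \ref{assumption1}--\ref{assumption3} yields $\text{var}(\widehat{\tau_1}) = O((np)^{-1})$; the arguments for $\widehat{\tau_2}$ and $\widehat{\tau_3}$ are analogous. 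Since $\tau_i = \Theta(1)$ and $f^{(1)}$ is continuous at $\tau_i$ by Assumption \ref{assumption5}, the continuous mapping theorem delivers $f^{(1)}(\widehat{\tau_i}) \xrightarrow{P} f^{(1)}(\tau_i)$, and under Assumption \ref{assumption6} the three values $f^{(1)}(\tau_i)$ are asymptotically of a common size, so the first factor converges to $1$ in probability.

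For the trace factor I would invoke the ratio consistency of the estimators of $\text{tr}(\Sigma_1^2)$, $\text{tr}(\Sigma_2^2)$ and $\text{tr}(\Sigma_1\Sigma_2)$ established in \citet{chen2010two} under essentially the same moment and structural assumptions. The only subtle point is the cross-sample estimator in the presence of a nonzero mean discrepancy; the bias enters through quadratic forms $(\mu_1-\mu_2)^\top\Sigma_i(\mu_1-\mu_2) = o(N^{-1}p)$ guaranteed by Assumption \ref{assumption6}, which is of strictly smaller order than $\text{tr}(\Sigma_1\Sigma_2)$ times any non-degenerate rate, so the ratio still tends to $1$. Combining the two factor-level convergences by Slutsky's theorem yields ratio consistency of each of the three summands, and hence of $\widehat{\text{var}(\Delta_1)}/\text{var}(\Delta_1)$.

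The main obstacle is the moment bookkeeping behind $\text{var}(\widehat{\tau_3})$, whose definition couples the two samples through cross terms of the form $X_i^\top Y_j$; keeping the variance at order $O((Np)^{-1})$ rather than something larger requires careful use of the independence of $\{U_i(k)\}_k$ and $\{V_j(k)\}_k$ in Assumption \ref{assumption1}, the bound $\|\mu_i\|_2^2 = O(p)$ in Assumption \ref{assumption3}, and the mean-discrepancy rate in Assumption \ref{assumption6}. Once the three pointwise convergences $\widehat{\tau_i} \xrightarrow{P} \tau_i$ are in hand, the remainder is essentially a packaging of Slutsky's theorem with the trace-estimator results already available in the literature.
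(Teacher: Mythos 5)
Your proposal is correct and follows essentially the same route as the paper's proof: reduce to $\textup{var}(\Delta_{1,1})$ via the local-alternative argument from Lemma \ref{lemma3.1}, cite Theorem 2 of \citet{chen2010two} for ratio consistency of the trace estimators, establish $\widehat{\tau_{i}}\xrightarrow{p}\tau_{i}$ through a variance bound of order $O(N^{-1}p^{-1})$, and conclude by the continuous mapping theorem and Slutsky. The extra bookkeeping you flag for $\widehat{\tau_3}$ and the cross-sample trace estimator is handled in the paper exactly as you anticipate, so no gap remains.
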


Based on Theorems \ref{theorem3.6}--\ref{theorem3.7}, we propose a test for $H_0$ as follows: at level $\alpha\in(0,1)$, reject $H_0$ if 
\[
\textup{MMD}_{n,m}^{2}/\surd{\widehat{\textup{var}({\color{red}\Delta_{1,1}})}}>z_{1-\alpha},
\]
and fail to reject $H_0$ otherwise, where $z_{1-\alpha}$ is the $100(1-\alpha)$th quantile of $N(0,1)$.

\subsection{Non-null CLTs based on higher-order Taylor series}\label{sec3.4}
Theorems \ref{theorem3.2}--\ref{theorem3.4} indicate that when $N$ grows slower than certain order of $p$, the kernel two-sample test is capable of detecting the discrepancy between the mean vectors and the covariance matrices through the quantities $\Delta_0$ and $T_1$. A natural question to ask is what high-order features are captured by the MMD if $N$ grows faster than the rates specified in Theorems \ref{theorem3.2}--\ref{theorem3.4}. To answer this question, we perform a more in-depth analysis based on the high-order Taylor series. We need to strengthen the moment assumption in Assumptions \ref{assumption1} and \ref{assumption5} as follows: for some $l\geq 3$,
\begin{assumptionp}{assumption1}\label{assumption1p}
The other parts in Assumption \ref{assumption1} remain the same. We additionally require that $\max_{1\leq k\leq q}\{E(U_{1}(k)^{4l}),E(V_{1}(k)^{4l})\}<\infty;$
\end{assumptionp}
\begin{assumptionp}{assumption5}\label{assumption5p}
Assume $g$ is a $C^{l}$ function on $[0,+\infty)$, and $\sup_{1\leq s\leq l}\sup_{x\geq 0}|g^{(s)}(x)|<\infty$.
\end{assumptionp}

Consider the $l$th order Taylor expansion (see Section \ref{secB.1} in the Appendix):
\[
f(p^{-1}\|X_{i_1}-X_{i_2}\|_{2}^2)=f(\tau_1)+\sum^{l-1}_{s=1}\frac{f^{(s)}(\tau_1)}{s!}\widetilde{X}_{i_1,i_2}^s+c_{l,\tau_1}(\widetilde{X}_{i_1,i_2})\widetilde{X}_{i_1,i_2}^{l},
\]
where $c_{l,\tau_1}(\cdot)$ is some bounded function. Analogous expansions hold for $f(p^{-1}\|Y_{j_1}-Y_{j_2}\|_{2}^{2})$ and $f(p^{-1}\|X_{i}-Y_{j}\|_{2}^{2})$. Then the unbiased MMD estimator (\ref{eq-mmds}) can be decomposed as
\[
\text{MMD}_{n,m}^{2}=\Delta_{0}+\sum^{l-1}_{s=1}\Delta_{s}+\widetilde{\Delta}_{l},
\]
where
\begin{align*}
    \Delta_{s}&=\frac{1}{s!}\bigg\{\frac{f^{(s)}(\tau_1)}{n(n-1)}\sum_{i_{1}\neq i_{2}}\widetilde{X}_{i_1,i_2}^s+\frac{f^{(s)}(\tau_2)}{m(m-1)}\sum_{j_{1}\neq j_{2}}\widetilde{Y}_{j_1,j_2}^s-\frac{2}{nm}f^{(s)}(\tau_3)\sum_{i,j}\widetilde{Z}_{i,j}^s\bigg\}\ \text{ for }s=1,\ldots,l-1,\\
    \widetilde{\Delta}_{l}&=\frac{1}{n(n-1)}\sum_{i_{1}\neq i_{2}}c_{l,\tau_1}(\widetilde{X}_{i_1,i_2})\widetilde{X}_{i_1,i_2}^{l}+\frac{1}{m(m-1)}\sum_{j_{1}\neq j_{2}}c_{l,\tau_2}(\widetilde{Y}_{j_1,j_2})\widetilde{Y}_{j_1,j_2}^{l}-\frac{2}{nm}\sum_{i,j}c_{l,\tau_3}(\widetilde{Z}_{i,j})\widetilde{Z}_{i,j}^{l}.
\end{align*}

Following similar arguments as in the calculation of $\text{var}(\widetilde{\Delta}_{2})$, one can show
\[
\text{var}(\widetilde{\Delta}_{l})=O(N^{-1}p^{-l}).
\]
In addition, we assume that
\begin{assumption}\label{assumption8}
for $s=1,\ldots,l-1$, $\text{var}\{f^{(s)}(\tau_1)E(\widetilde{X}_{1,2}^s\mid X_1)-f^{(s)}(\tau_3)E(\widetilde{Z}_{1,1}^s\mid X_1)\}=o(N^{-1}p^{-1})$ and $\text{var}\{f^{(s)}(\tau_2)E(\widetilde{Y}_{1,2}^s\mid Y_1)-f^{(s)}(\tau_3)E(\widetilde{Z}_{1,1}^s\mid Y_1)\}=o(N^{-1}p^{-1}),$ 
\end{assumption}
Then we have
\[
\text{var}(\Delta_s)=o(N^{-2}p^{-1})\text{ for }s=2,\ldots,l-1.
\]
See more details for the analysis of the variances of $\Delta_{s}$ with $s=2,\ldots,l-1$ and $\widetilde{\Delta}_{l}$ in Section \ref{secB.4} in the Appendix. 

Assumption \ref{assumption8} imposes a type of local alternative ensuring that {\color{red}$\Delta_1$
(or more precisely $\Delta_{1,1}$) is the dominant term} of the sample MMD, which can be viewed as a generalization of Assumption \ref{assumption6}. To illustrate Assumption \ref{assumption8}, we present the following two results.
\begin{proposition}\label{prop1}
For $s=1$, Assumption \ref{assumption8} is equivalent to Assumption \ref{assumption6}. 
\end{proposition}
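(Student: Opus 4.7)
The plan is to unpack the $s=1$ variance condition in Assumption \ref{assumption8} by computing the two conditional expectations in closed form and then reading off an equivalence with the two scalar conditions of Assumption \ref{assumption6}.

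First, from Assumption \ref{assumption1} and the identities $E\|X_2\|_2^2 = \text{tr}(\Sigma_1) + \|\mu_1\|_2^2$, $E\|Y_1\|_2^2 = \text{tr}(\Sigma_2) + \|\mu_2\|_2^2$, combined with the definitions of $\tau_1,\tau_3$, a direct calculation gives, with $W_1 = X_1-\mu_1$ and $\delta = \mu_1-\mu_2$,
\[
E(\widetilde{X}_{1,2}\mid X_1) = p^{-1}\bigl\{\|W_1\|_2^{2} - \text{tr}(\Sigma_1)\bigr\},\quad E(\widetilde{Z}_{1,1}\mid X_1) = p^{-1}\bigl\{\|W_1\|_2^{2} - \text{tr}(\Sigma_1) + 2W_1^\top\delta\bigr\}.
\]
Thus the random variable inside the first variance of Assumption \ref{assumption8} at $s=1$ equals $p^{-1}\{\alpha(\|W_1\|_2^{2}-\text{tr}(\Sigma_1)) - 2\beta W_1^\top\delta\}$, where $\alpha := f^{(1)}(\tau_1)-f^{(1)}(\tau_3)$ and $\beta := f^{(1)}(\tau_3)$ (bounded by Assumption \ref{assumption5} together with $\tau_3=\Theta(1)$). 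Writing $W_1 = \Gamma_1 U_1$ and exploiting independence of the entries of $U_1$, standard moment formulas for quadratic and linear forms give $V_1 := \text{var}(\|W_1\|_2^{2}) = 2\,\text{tr}(\Sigma_1^{2}) + \sum_{k}(EU_1(k)^{4}-3)(\Gamma_1^\top\Gamma_1)_{kk}^{2} = \Theta(p)$ under Assumptions \ref{assumption1}--\ref{assumption2}, $\text{var}(W_1^\top\delta) = \delta^\top\Sigma_1\delta =: Q_1$, and $C_1 := \text{cov}(\|W_1\|_2^{2}, W_1^\top\delta) = \sum_{k}(\Gamma_1^\top\Gamma_1)_{kk}(\Gamma_1^\top\delta)_k EU_1(k)^{3}$, which by Cauchy--Schwarz satisfies $|C_1| = O(\sqrt{pQ_1})$. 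Multiplying out,
\[
p^{2}\,\text{var}\{\cdots\} = \alpha^{2} V_1 + 4\beta^{2} Q_1 - 4\alpha\beta C_1.
\]

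For the forward direction, Assumption \ref{assumption6} yields $\alpha^{2} = o(N^{-1})$ and $Q_1 = o(N^{-1}p)$, so the first two terms are $o(N^{-1}p)$; AM--GM gives $|4\alpha\beta C_1| = O(|\alpha|\sqrt{pQ_1}) \le \alpha^{2}p/2 + Q_1/2 = o(N^{-1}p)$, and dividing by $p^{2}$ delivers the $o(N^{-1}p^{-1})$ bound in Assumption \ref{assumption8}. The same argument with $W_1$ replaced by $Y_1-\mu_2$ handles the $i=2$ case. For the reverse direction, completing the square in $\alpha$ gives
\[
p^{2}\,\text{var}\{\cdots\} = V_1\bigl(\alpha - 2\beta C_1/V_1\bigr)^{2} + 4\beta^{2}\bigl(Q_1 - C_1^{2}/V_1\bigr),
\]
a sum of two nonnegative pieces. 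Since $V_1 = \Theta(p)$ while $C_1^{2}/V_1 = O(Q_1)$ with the implicit constant strictly below $1$ (a mild non-degeneracy ensuring that the correlation of $\|W_1\|_2^{2}$ and $W_1^\top\delta$ is bounded away from $\pm 1$), the residual $Q_1 - C_1^{2}/V_1$ is of order $Q_1$. The second piece being $o(N^{-1}p)$ then forces $Q_1 = o(N^{-1}p)$, and together with $|\beta C_1/V_1| = O(\sqrt{Q_1/p}) = o(N^{-1/2})$ the first piece forces $\alpha^{2} = o(N^{-1})$; the $Y_1$-conditional derivation yields the $i=2$ part of Assumption \ref{assumption6}.

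The main obstacle is the reverse direction: a priori the cross term $-4\alpha\beta C_1$ could cancel portions of $\alpha^{2}V_1$ and $4\beta^{2}Q_1$. The completion-of-squares identity displays $p^{2}\,\text{var}\{\cdots\}$ as a sum of two nonnegative residuals, and the quantitative gap $|C_1| = O(\sqrt{pQ_1})$ versus $V_1 = \Theta(p)$ is precisely what prevents the residuals from collapsing, allowing the two scalar conditions in Assumption \ref{assumption6} to be recovered individually from the single variance condition in Assumption \ref{assumption8}.
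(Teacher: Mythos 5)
Your computation is correct and, in substance, it is the paper's own argument: the two conditional expectations you evaluate are exactly the per-observation summands of $\Delta_{1,2}+\Delta_{1,3}$, and the paper's proof of Proposition \ref{prop1} simply observes that $I_2+I_3=\Delta_{1,2}+\Delta_{1,3}$ in the decomposition of Section B.4 and then appeals to the closed-form variances of $\Delta_{1,2}$ and $\Delta_{1,3}$ recorded in the proof of Lemma \ref{lemma3.1}; your $\alpha^{2}V_1+4\beta^{2}Q_1-4\alpha\beta C_1$ is that same quantity written out from scratch. Your forward direction is complete: only the upper bounds $V_1=O(p)$, $|C_1|=O(\sqrt{pQ_1})$ and $\beta=O(1)$ are needed there, and all of them hold under Assumptions \ref{assumption1}--\ref{assumption3} and \ref{assumption5}.

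The point to flag is the converse. The completion-of-squares step is the right way to rule out cancellation by the cross term, but it leans on two lower bounds that do not follow from Assumptions \ref{assumption1}--\ref{assumption2}: that $V_1=\textup{var}(\|W_1\|_2^{2})$ is bounded below by a multiple of $p$, and that the correlation between $\|W_1\|_2^{2}$ and $W_1^{\top}\delta$ stays bounded away from $\pm 1$. Both can fail: for two-point entry distributions $U_1(k)\in\{a,b\}$ the square $U_1(k)^{2}$ is an affine function of $U_1(k)$, so with diagonal $\Gamma_1^{\top}\Gamma_1$ the quantity $V_1=2\,\textup{tr}(\Sigma_1^{2})+\sum_k\{E(U_1(k)^{4})-3\}s_{kk}^{2}$ can vanish and the correlation can equal $\pm 1$, in which case the single variance condition in Assumption \ref{assumption8} cannot separate $\alpha$ from $Q_1$. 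So your ``mild non-degeneracy'' is genuinely an additional hypothesis, and the assertion that $V_1=\Theta(p)$ under Assumptions \ref{assumption1}--\ref{assumption2} is an overstatement (only $O(p)$ is guaranteed). To be fair, the paper's own proof says no more than ``the equivalence follows'' and silently relies on the same separation of $\textup{var}(\Delta_{1,2})$ and $\textup{var}(\Delta_{1,3})$ from their covariance, so you have not introduced a weakness the paper avoids; you have made its implicit assumption visible, which is arguably an improvement, but you should state it as an assumption rather than attribute it to Assumptions \ref{assumption1}--\ref{assumption2}.
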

\begin{proposition}\label{prop2}
For $s=2$, suppose $\mu_1=\mu_2=\mu$ and $\Sigma_1=\Sigma_2$. Under Assumptions \ref{assumption1}--\ref{assumption2}, 
\[
|\textup{var}\{f^{(2)}(\tau_1)E(\widetilde{X}_{1,2}^2\mid X_1)-f^{(2)}(\tau_3)E(\widetilde{Z}_{1,1}^2\mid X_1)\}|=O\bigg(p^{-4}\sum_{k=1}^{q}\{s_{kk}(\mu_{k,3}^{(1)}-\mu_{k,3}^{(2)})\}^2\bigg),
\]
where $\mu_{k,3}^{(1)}=E(U_1(k)^3)$, $\mu_{k,3}^{(2)}=E(V_1(k)^3)$ and $\Gamma^\top \Gamma=(s_{kl})_{q\times q}$. 
\end{proposition}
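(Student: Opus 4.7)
The plan is to exploit the full symmetry $\mu_1=\mu_2$ and $\Sigma_1=\Sigma_2$ to collapse the variance calculation to a single surviving ``third-moment'' piece. Because the moments match, $\tau_1=\tau_2=\tau_3=:\tau$, so $f^{(2)}(\tau_1)=f^{(2)}(\tau_3)$ is a single bounded constant (Assumption \ref{assumption5}) which I pull outside the variance. The task then reduces to bounding $\textup{var}\{E(\widetilde{X}_{1,2}^2\mid X_1)-E(\widetilde{Z}_{1,1}^2\mid X_1)\}$, with the correct $p^{-4}$ normalization emerging from the two explicit factors of $p^{-1}$ in $\widetilde{X}_{1,2},\widetilde{Z}_{1,1}$ after squaring and one further factor of $p^{-2}$ from squaring the sum $\sum_i(\Gamma^\top A)_i s_{ii}(\mu_{i,3}^{(1)}-\mu_{i,3}^{(2)})$.

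Write $A=X_1-\mu=\Gamma U_1$, using the common representation $\Gamma_1=\Gamma_2=\Gamma$ permitted by $\Sigma_1=\Sigma_2$, and decompose
\[
\widetilde{X}_{1,2}=\alpha+\beta+\gamma',\qquad \widetilde{Z}_{1,1}=\alpha+\beta^{\sharp}+\gamma^{\sharp\prime},
\]
where $\alpha=p^{-1}\|A\|_2^2-\tau/2$ is $X_1$-measurable; $\beta=-2p^{-1}A^\top\Gamma U_2$ and $\beta^\sharp=-2p^{-1}A^\top\Gamma V_1$ are conditionally mean-zero and linear in $U_2,V_1$; and, with $S=\Gamma^\top\Gamma$, the quadratic forms $\gamma'=p^{-1}(U_2^\top S U_2-\operatorname{tr} S)$, $\gamma^{\sharp\prime}=p^{-1}(V_1^\top S V_1-\operatorname{tr} S)$ are conditionally mean-zero. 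Squaring and conditioning on $X_1$ gives
\[
E(\widetilde{X}_{1,2}^2\mid X_1)=\alpha^2+E(\beta^2\mid X_1)+E(\gamma'^2\mid X_1)+2E(\beta\gamma'\mid X_1),
\]
and the analogous formula for $\widetilde{Z}_{1,1}^2$. I will then verify that $E(\beta^2\mid X_1)=4p^{-2}A^\top\Sigma A=E(\beta^{\sharp 2}\mid X_1)$ (identity covariance of $U_2$ and $V_1$), so these cancel exactly, and that $E(\gamma'^2\mid X_1),\,E(\gamma^{\sharp\prime 2}\mid X_1)$ are functions of the distributions of $U_2,V_1$ alone and so are $X_1$-free deterministic constants; their difference therefore contributes $0$ to the variance. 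The $\alpha^2$ terms are identical and also cancel.

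The only surviving piece is the cross term. Expanding $\beta\gamma'=-2p^{-2}(u^\top U_2)(U_2^\top S U_2)$ with $u=\Gamma^\top A$ and using independence plus the zero-mean of the entries of $U_2$, only triples $(i,j,k)$ with $i=j=k$ contribute, yielding
\[
E(\beta\gamma'\mid X_1)=-2p^{-2}\sum_i u_i s_{ii}\mu_{i,3}^{(1)},\qquad E(\beta^\sharp\gamma^{\sharp\prime}\mid X_1)=-2p^{-2}\sum_i u_i s_{ii}\mu_{i,3}^{(2)},
\]
so the conditional-expectation difference equals $-4p^{-2}\sum_i(\Gamma^\top A)_i s_{ii}\{\mu_{i,3}^{(1)}-\mu_{i,3}^{(2)}\}$ after including the outer factor of $2$. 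Substituting $\Gamma^\top A=SU_1$ rewrites the expression as $\sum_k d_k U_1(k)$ with $d_k=(S\theta)_k$ and $\theta_i=s_{ii}\{\mu_{i,3}^{(1)}-\mu_{i,3}^{(2)}\}$.

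To finish, independence and unit variance of the entries of $U_1$ give variance $\|S\theta\|_2^2\le\|S\|_{\rm op}^2\|\theta\|_2^2$, and $\|S\|_{\rm op}=\|\Gamma^\top\Gamma\|_{\rm op}=\|\Gamma\Gamma^\top\|_{\rm op}=\|\Sigma\|_{\rm op}\le K$ by Assumption \ref{assumption2}, so the bound is $K^2\sum_k\{s_{kk}(\mu_{k,3}^{(1)}-\mu_{k,3}^{(2)})\}^2$. Reinstating the prefactor $16p^{-4}[f^{(2)}(\tau)]^2$ (bounded by Assumption \ref{assumption5}) yields the claim. The main obstacle is not any single estimate but the bookkeeping: identifying that under $\Sigma_1=\Sigma_2$ all second-moment contributions cancel exactly (so no $p^{-2}$- or $p^{-3}$-order leakage remains) and then passing from the ``off-diagonal'' sum $\|S\theta\|_2^2$ to the ``diagonal'' sum $\sum_k\theta_k^2$ via the operator-norm control on $S$, which is precisely what upgrades the rate to $p^{-4}$ modulated by the announced third-moment discrepancy.
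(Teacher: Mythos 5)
Your proposal is correct and follows essentially the same route as the paper: both arguments reduce the conditional-expectation difference to a constant plus a linear functional of $X_1$ whose coefficient vector is $\Gamma\theta$ (equivalently $S\theta$ in the $U_1$ coordinates) with $\theta_k=s_{kk}(\mu_{k,3}^{(1)}-\mu_{k,3}^{(2)})$, the surviving term coming from the third-moment diagonal contributions, and then bound its variance by $\theta^\top S^2\theta\le\|S\|_{\rm op}^2\|\theta\|_2^2$ using Assumption \ref{assumption2}. Your $\alpha,\beta,\gamma'$ decomposition merely makes the cancellation of the quadratic and purely-$U_2$/$V_1$ pieces more explicit than the paper's direct expansion of $E(\|X_1-X_2\|_2^4\mid X_1)-E(\|X_1-Y_1\|_2^4\mid X_1)$, but the substance is identical.
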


As a result of Proposition \ref{prop2}, Assumption \ref{assumption8} holds for $s=2$ provided that $\sum_{k=1}^{q}\{s_{kk}(\mu_{k,3}^{(1)}-\mu_{k,3}^{(2)})\}^2=o(N^{-1}p^{3})$, which holds if $\max_k|\mu_{k,3}^{(1)}-\mu_{k,3}^{(2)}|\leq C$ and $N=o(p^2)$ for some constant $C>0.$

\begin{theorem}\label{theorem3.9}
Under Assumptions \ref{assumption1p}, \ref{assumption2}--\ref{assumption4}, \ref{assumption5p}, \ref{assumption8} and $N=o(p^{l-1})$,
\[
\frac{\textup{MMD}_{n,m}^{2}-\textup{MMD}^2(P_X,P_Y)}{\surd{\textup{var}({\color{red}\Delta_{1,1}})}}\rightarrow N(0,1)\text{ in distribution, as }N\rightarrow\infty\text{ and }p\rightarrow\infty.
\]
\end{theorem}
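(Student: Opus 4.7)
The plan is to exploit the $l$th-order Taylor expansion
\[
\text{MMD}_{n,m}^{2}=\Delta_{0}+\sum^{l-1}_{s=1}\Delta_{s}+\widetilde{\Delta}_{l}
\]
together with the unbiasedness of the sample MMD, which gives $E(\text{MMD}_{n,m}^{2})=\text{MMD}^2(P_X,P_Y)$. Since $\Delta_0$ is deterministic, centering both sides yields
\[
\text{MMD}_{n,m}^{2}-\text{MMD}^2(P_X,P_Y)=\bigl(\Delta_1-E(\Delta_1)\bigr)+\sum_{s=2}^{l-1}\bigl(\Delta_s-E(\Delta_s)\bigr)+\bigl(\widetilde{\Delta}_l-E(\widetilde{\Delta}_l)\bigr).
\]
I would then show that, after normalization by $\surd{\text{var}(\Delta_1)}$, the first term is asymptotically $N(0,1)$ while every other summand is $o_p(1)$, and invoke Slutsky's theorem to conclude.

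For the leading contribution, note that each of $\Delta_{1,1},\Delta_{1,2},\Delta_{1,3}$ has mean zero by construction, so $E(\Delta_1)=0$. Proposition \ref{prop1} shows that Assumption \ref{assumption8} at $s=1$ coincides with Assumption \ref{assumption6}, so Lemma \ref{lemma3.1} applies and directly gives $\Delta_1/\surd{\text{var}(\Delta_1)}\rightarrow N(0,1)$ in distribution.

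For the higher-order Taylor pieces $\Delta_s$ ($s\ge 2$) and the remainder $\widetilde{\Delta}_l$, I would combine Chebyshev's inequality with the variance calculations already carried out in Section B.4 of the Appendix. Those bounds, valid under Assumptions \ref{assumption1p}, \ref{assumption2}--\ref{assumption4}, \ref{assumption5p}, and \ref{assumption8}, read
\[
\text{var}(\Delta_s)=o(N^{-2}p^{-1})\ (s=2,\ldots,l-1),\qquad \text{var}(\widetilde{\Delta}_l)=O(N^{-1}p^{-l}).
\]
Using the leading-order bound $\text{var}(\Delta_1)=\Theta(N^{-2}p^{-1})$, which follows from $\text{tr}(\Sigma_i^2)=\Theta(p)$ under Assumption \ref{assumption2} together with $\text{var}(\Delta_{1,1})$ dominating the other two subterms under the local-alternative regime of Assumption \ref{assumption8}, I obtain
\[
\frac{\text{var}(\Delta_s)}{\text{var}(\Delta_1)}=o(1),\qquad \frac{\text{var}(\widetilde{\Delta}_l)}{\text{var}(\Delta_1)}=O\bigl(N\,p^{-(l-1)}\bigr)=o(1),
\]
where the last equality uses exactly the rate assumption $N=o(p^{l-1})$. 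Chebyshev then yields $(\Delta_s-E(\Delta_s))/\surd{\text{var}(\Delta_1)}=o_p(1)$ for $s=2,\ldots,l-1$ and $(\widetilde{\Delta}_l-E(\widetilde{\Delta}_l))/\surd{\text{var}(\Delta_1)}=o_p(1)$, and Slutsky's theorem combined with the centered decomposition above finishes the argument.

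The main obstacle is twofold. First, one must ensure that $\text{var}(\Delta_1)$ actually attains the full order $\Theta(N^{-2}p^{-1})$ and is not dominated by cancellations among the variances of $\Delta_{1,1},\Delta_{1,2},\Delta_{1,3}$; this is precisely where Assumption \ref{assumption8} at $s=1$ (equivalently Assumption \ref{assumption6}) plays its role, in the same way it does in Lemma \ref{lemma3.1}. Second, the variance bound $\text{var}(\Delta_s)=o(N^{-2}p^{-1})$ for general $s$ is not obvious: a naive Hoeffding-type decomposition of $\Delta_s$ would produce a term of order $N^{-1}$ times $\text{var}\{f^{(s)}(\tau_1)E(\widetilde{X}_{1,2}^s\mid X_1)-f^{(s)}(\tau_3)E(\widetilde{Z}_{1,1}^s\mid X_1)\}$ (and its $Y$-analogue), and Assumption \ref{assumption8} is exactly the condition that kills these leading conditional-expectation contributions so that only the fully-degenerate, $N^{-2}$-sized piece remains. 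Once these two ingredients are in place, everything else reduces to routine Chebyshev and Slutsky bookkeeping.
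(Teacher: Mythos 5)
Your proposal is correct and follows essentially the same route as the paper: decompose $\textup{MMD}_{n,m}^{2}$ via the $l$th-order Taylor expansion, reduce the leading term to Lemma \ref{lemma3.1} (the paper likewise relies on Assumption \ref{assumption8} at $s=1$ playing the role of Assumption \ref{assumption6}), and kill the terms $\Delta_s$ ($s\ge 2$) and $\widetilde{\Delta}_l$ by the variance bounds of Section B.4 together with $N=o(p^{l-1})$, Chebyshev, and Slutsky. Your added remarks on why $\textup{var}(\Delta_1)=\Theta(N^{-2}p^{-1})$ and on the role of the Hoeffding-type decomposition behind Assumption \ref{assumption8} are accurate and, if anything, make explicit steps the paper leaves implicit.
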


Below we study the effects from the higher-order moments. To disentangle the higher-order effects, we assume throughout the following discussions that
$X$ and $Y$ share the first $(l-1)$ moments, i.e., $\mu_{1}=\mu_{2}$, $\Sigma_{1}=\Sigma_{2}$, $E(U_{1}(k)^{s})=E(V_{1}(k)^{s})$ for $s=1,\ldots,l-1$ and $k=1,\ldots,q$. We begin by analyzing the higher-order effects on the population MMD. Define
\begin{equation}\label{eq-Ts}
T_{s-1}=\frac{f^{(s)}(\tau)}{s!p^{s}}\left[E\{(\|X_{1}-X_{2}\|_{2}^{2}-p\tau)^{s}\}+E\{(\|Y_{1}-Y_{2}\|_{2}^{2}-p\tau)^{s}\}-2E\{(\|X_{1}-Y_{1}\|_{2}^{2}-p\tau)^{s}\}\right]
\end{equation}
for $s\geq 2$, where $\tau=\tau_1=\tau_2=\tau_3$. 
\begin{remark}\label{remark1}
Suppose all the moments of $U_i$ and $V_j$ exist. Applying the Taylor series expansion to the population MMD, we get 
$\text{MMD}^2(P_X,P_Y)=\Delta_0+\sum^{\infty}_{s=1}T_{s},$
as $p\rightarrow \infty.$ It suggests that the combined effect from $\Delta_0$ and all $T_s$'s is equivalent to that of the population MMD. When the first $(l-1)$ moments of $X$ and $Y$ coincide, $\Delta_0=0$ and $T_s=0$ for $s=1,\ldots,l-2$, which is shown in Section \ref{secB.5} in the Appendix. Therefore, 
$\text{MMD}^2(P_X,P_Y)=\sum^{\infty}_{s=l-1}T_{s},$
which indicates that MMD detects the discrepancy lying on the higher-order moments through the quantity $\sum^{\infty}_{s=l-1}T_{s}$.
\end{remark}

The expansion in Remark \ref{remark1} motivates us to study the properties of $T_{s}$ for $s\geq l-1$. For any $a_1\ge 0,a_2\ge 0,a_1+a_2\leq a$, define two tensors
\[
\mathcal{T}_{1,a}^{(a_1,a_2)}=(\mu_{i_1,\ldots,i_{a-a_1-a_2}}^{(1)})_{1\leq i_1,\ldots,i_{a-a_1-a_2}\leq p},\quad\mathcal{T}_{2,a}^{(a_1,a_2)}=(\mu_{i_1,\ldots,i_{a-a_1-a_2}}^{(2)})_{1\leq i_1,\ldots,i_{a-a_1-a_2}\leq p},
\]
where 
\[
\mu_{i_1,\ldots,i_{a-a_1-a_2}}^{(1)}=\sum_{j_1,\ldots,j_{a_1}}E\bigg[\prod^{a-a_1-a_2}_{s=1}\{x_{i_s}-E(x_{i_s})\}\prod^{a_1}_{s=1}\{x_{j_s}-E(x_{j_s})\}^2\bigg],
\]
and $\mu_{i_1,\ldots,i_{a-a_1-a_2}}^{(2)}$ is defined in a similar fashion with respect to $Y$. 

\begin{lemma}\label{lemma3.8}
Under $\mu_{1}=\mu_{2}$, $\Sigma_{1}=\Sigma_{2}$, $E(U_{1}(k)^{s})=E(V_{1}(k)^{s})$ for $s=1,\ldots,l-1$ and $k=1,\ldots,q$, we have for any $l\geq 3$,
\[
 T_{l-1}=\frac{f^{(l)}(\tau)}{p^{l}}\sum_{\substack{0\leq a_1+a_2\leq l\\a_1=a_2}}(-2)^{l-a_1-a_2}\frac{1}{a_1!a_2!(l-a_1-a_2)!}\|\mathcal{T}_{1,l}^{(a_1,a_2)}-\mathcal{T}_{2,l}^{(a_1,a_2)}\|_{\rm F}^2.  
\]
Besides, $T_{l-1}=O(p^{-2r+2})$ when $l=2r-1$ or $l=2r$ for $r\ge 2$. Assume additionally that
\begin{assumption}\label{assumption9}
$\|\mathcal{T}_{1,a}^{(a_1,a_2)}-\mathcal{T}_{2,a}^{(a_1,a_2)}\|_{\rm F}^2=O(p^{a+a_1-a_2-2r+2})$ when $l=2r-1$ or $l=2r$ for $r \geq 2$, $a_1\ge 0$, $a_2\ge 0$, $a_1+a_2\leq a$, $|a_1-a_2|\leq a-l$, and $l+1\leq a\leq s\leq 2(l-1)-1$, which holds automatically when $X_i=U_i+\mu$ and $Y_j=V_j+\mu$. 
\end{assumption}
Then for any $l+1\leq s\leq 2(l-1)-1$ and $r\geq 2$, $T_{s-1}=O(p^{-2r+2})$ when $l=2r-1$ or $l=2r$. 
Consequently, for $l=2r-1$ or $l=2r$ with $r\geq 2$,
\[
\textup{MMD}^2(P_X,P_Y)=O(p^{-2r+2}).
\]
\end{lemma}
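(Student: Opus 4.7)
Under the stated assumptions $\mu_1 = \mu_2 = \mu$ and $\Sigma_1 = \Sigma_2 = \Sigma$, so $\tau_1 = \tau_2 = \tau_3 = \tau$, I would write $W_i = X_i - \mu$, $V_j = Y_j - \mu$, and set $A^X_i = \|W_i\|_2^2 - \text{tr}(\Sigma)$ and $B^{XX}_{12} = W_1^\top W_2$, with analogous notation for $Y$ and for the cross term $B^{XY}_{11} = W_1^\top V_1$. Then $\|X_1 - X_2\|_2^2 - p\tau = A^X_1 + A^X_2 - 2 B^{XX}_{12}$, and analogously for the other two quadratic forms entering (\ref{eq-Ts}). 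The multinomial theorem applied to each $s$-th power, combined with independence across samples, factorizes the resulting expectations through the tensor
\[
\psi_X^{(a,b)}(j_1, \ldots, j_b) = E\bigl[(A^X)^a\, W(j_1) \cdots W(j_b)\bigr],
\]
(and $\psi_Y^{(a,b)}$ analogously), so that $E\bigl[(A^X_1)^{a_1}(A^X_2)^{a_2}(B^{XX}_{12})^b\bigr] = \langle \psi_X^{(a_1,b)}, \psi_X^{(a_2,b)} \rangle$, and similarly for the $YY$ and $XY$ pieces.

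I would next combine the $XX$, $YY$, and cross contributions. Pairing the multinomial term $(a_1, a_2, b)$ with its transpose $(a_2, a_1, b)$, which shares the symmetric coefficient $\frac{s!}{a_1! a_2! b!}(-2)^b$, collapses their joint contribution into $\langle \psi_X^{(a_1, b)} - \psi_Y^{(a_1, b)},\ \psi_X^{(a_2, b)} - \psi_Y^{(a_2, b)} \rangle$. Since $W = \Gamma U$ with independent components $U(k), V(k)$ sharing their first $l - 1$ moments, every expectation involving at most $l - 1$ total factors of $W$ agrees with its $V$-counterpart, whence $\psi_X^{(a,b)} = \psi_Y^{(a,b)}$ whenever $2a + b \leq l - 1$. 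For $s = l$, the constraint $a_1 + a_2 + b = l$ together with the non-vanishing requirement $2\min(a_1, a_2) + b \geq l$ forces $a_1 = a_2$. Expanding $(A^X)^a = \sum_{j=0}^{a} \binom{a}{j}(-\text{tr}(\Sigma))^{a - j}(\|W\|_2^2)^j$, the contributions with $j < a$ carry total degree $2j + b \leq l - 1$ in $W$ and therefore cancel in the $X - Y$ difference, so $\psi_X^{(a, l - 2a)} - \psi_Y^{(a, l - 2a)}$ reduces exactly to $\mathcal{T}_{1, l}^{(a, a)} - \mathcal{T}_{2, l}^{(a, a)}$. Reassembling the multinomial coefficients with the prefactor $f^{(l)}(\tau)/(l! p^l)$ in the definition of $T_{l-1}$ produces the stated closed form.

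For the rate, each surviving tensor $\mathcal{T}_{i, l}^{(a, a)}$ has order $l - 2a$ and thus $p^{l - 2a}$ entries, each a moment of total degree $l$ in $W$ (resp. $V$). Using $W = \Gamma U$, Assumption~\ref{assumption2} on $\|\Sigma\|_{\rm op}$, and the $4l$-th moment bound in Assumption~\ref{assumption1p}, a bookkeeping over index collisions yields $\|\mathcal{T}_{1, l}^{(a, a)} - \mathcal{T}_{2, l}^{(a, a)}\|_{\rm F}^2 = O(p^{l - 2r + 2})$ when $l \in \{2r - 1, 2r\}$, so the $p^{-l}$ prefactor delivers $T_{l - 1} = O(p^{-2r + 2})$. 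For $T_{s - 1}$ with $l + 1 \leq s \leq 2l - 3$, the same tensor-inner-product expansion applies; each non-vanishing contribution is now a (not necessarily squared) inner product of tensor differences $\mathcal{T}_{1, s}^{(a_1, a_2)} - \mathcal{T}_{2, s}^{(a_1, a_2)}$, whose Frobenius norms are controlled directly by Assumption~\ref{assumption9}, and Cauchy--Schwarz then yields $T_{s - 1} = O(p^{-2r + 2})$. Finally, Remark~\ref{remark1} gives $\textup{MMD}^2(P_X, P_Y) = \sum_{s \geq l - 1} T_s$, and summing the bounds (with the remaining $T_s$ for $s \geq 2l - 3$ absorbed by the same moment controls) produces $\textup{MMD}^2(P_X, P_Y) = O(p^{-2r + 2})$. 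The main obstacle will be the identification step $\psi_X^{(a, l - 2a)} - \psi_Y^{(a, l - 2a)} = \mathcal{T}_{1, l}^{(a, a)} - \mathcal{T}_{2, l}^{(a, a)}$: verifying that every lower-order term produced by the binomial expansion of $(\|W\|_2^2 - \text{tr}(\Sigma))^a$ cancels in the $X - Y$ difference requires careful moment bookkeeping, as does showing the $O(p^{-2r + 2})$ rate uniformly over all surviving tensor differences.
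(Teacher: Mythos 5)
Your treatment of the closed form and of the bounds on $T_{l-1}$ and $T_{s-1}$ for $l+1\le s\le 2(l-1)-1$ is essentially the paper's argument, reorganized: the paper first expands binomially in $(-p\tau)^{s-a}$ and then applies the multinomial theorem to $\|X_1-X_2\|_2^{2a}$, whereas you center $\|W\|_2^2$ by $\mathrm{tr}(\Sigma)$ up front. Either way the computation factorizes through the same moment tensors, the matching of the first $l-1$ moments kills every term of total degree at most $l-1$ (including the lower-order terms from your binomial expansion of $(A^X)^a$, so your identification $\psi_X^{(a,l-2a)}-\psi_Y^{(a,l-2a)}=\mathcal{T}_{1,l}^{(a,a)}-\mathcal{T}_{2,l}^{(a,a)}$ is correct), the symmetrization over $(a_1,a_2)\leftrightarrow(a_2,a_1)$ produces inner products of tensor differences (squared Frobenius norms when $s=l$, where the non-vanishing constraint forces $a_1=a_2$), and Cauchy--Schwarz plus Assumption \ref{assumption9} handles $s>l$. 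All of that is sound and matches the paper.

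The gap is in your last step. You invoke Remark \ref{remark1} to write $\mathrm{MMD}^2(P_X,P_Y)=\sum_{s\ge l-1}T_s$ and then claim the tail $\sum_{s\ge 2l-3}T_s$ is ``absorbed by the same moment controls.'' This does not work under the stated hypotheses: the infinite expansion in Remark \ref{remark1} is derived assuming all moments of $U_i,V_j$ exist, while Assumption \ref{assumption1p} only provides moments up to order $4l$; and Assumption \ref{assumption9} only controls the tensor differences for $a\le s\le 2(l-1)-1$, so you have no bound on $T_s$ for $s\ge 2(l-1)$, even formally. The paper instead truncates: it Taylor-expands the sample MMD to order $2(l-1)$, takes expectations to get $\mathrm{MMD}^2(P_X,P_Y)=\sum_{s=l}^{2(l-1)-1}T_{s-1}+E(\widetilde{\Delta}_{2(l-1)})$, and bounds the single remainder via $E\{(\|X_1-X_2\|_2^2-p\tau)^{2(l-1)}\}=O(p^{l-1})$, which gives $E(\widetilde{\Delta}_{2(l-1)})=O(p^{-l+1})=O(p^{-2r+2})$. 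You need this finite-order remainder argument (or an equivalent one) to close the proof. Separately, you do not address the clause asserted inside Assumption \ref{assumption9} --- that the bound holds automatically when $X_i=U_i+\mu$ and $Y_j=V_j+\mu$ --- which the paper establishes by counting how many latent indices must collide for a tensor-difference entry to survive the moment matching; that verification is part of the lemma and should not be omitted.
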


The term $\mathcal{T}_{1,a}^{(a_1,a_2)}-\mathcal{T}_{2,a}^{(a_1,a_2)}$ quantifies the discrepancy in the $(a+a_1-a_2)$th moment. The Frobenius norm in Assumption \ref{assumption9} is the cumulative effects from these discrepancies measured by $T_{s-1}$ for $l+1\leq s\leq 2(l-1)-1$. In short, Assumption \ref{assumption9} ensures that the discrepancy in the $s$th order moments with $s\geq l+1$ will have the same effect size or smaller effect size than that in the $l$th order moment. 

\begin{corollary}\label{Coro-trivial-1}
Suppose the assumptions in Theorem \ref{theorem3.9} and Lemma \ref{lemma3.8} hold. For $r\geq 2$, if $N=o(p^{l-3/2})$ when $l=2r-1$, or $N=o(p^{l-5/2})$ when $l=2r$, 
\[
\frac{\textup{MMD}_{n,m}^{2}}{\surd{\textup{var}({\color{red}\Delta_{1,1}})}}\rightarrow N(0,1)\text{ in distribution, as }N\rightarrow\infty\text{ and }p\rightarrow\infty.
\]
\end{corollary}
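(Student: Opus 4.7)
The plan is to combine Theorem \ref{theorem3.9} with the bias bound from Lemma \ref{lemma3.8} via a Slutsky argument. Writing
\[
\frac{\textup{MMD}_{n,m}^{2}}{\surd{\textup{var}(\Delta_{1})}}=\frac{\textup{MMD}_{n,m}^{2}-\textup{MMD}^{2}(P_X,P_Y)}{\surd{\textup{var}(\Delta_{1})}}+\frac{\textup{MMD}^{2}(P_X,P_Y)}{\surd{\textup{var}(\Delta_{1})}},
\]
Theorem \ref{theorem3.9} forces the first (random) summand to $N(0,1)$, while the second is purely deterministic and needs to be shown to vanish under the dimension-and-sample rate prescribed in the corollary. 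To apply Theorem \ref{theorem3.9}, observe that Assumptions \ref{assumption1p}, \ref{assumption2}--\ref{assumption4}, \ref{assumption5p} and \ref{assumption8} are part of the corollary's standing hypotheses (inherited from Theorem \ref{theorem3.9} and Lemma \ref{lemma3.8}), and its rate requirement $N=o(p^{l-1})$ is strictly weaker than the assumed $N=o(p^{l-3/2})$ when $l=2r-1$ and $N=o(p^{l-5/2})$ when $l=2r$ (since $2r-5/2<2r-2<2r-1$ for $r\ge 2$).

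\textbf{Negligibility of the bias.} Lemma \ref{lemma3.8} gives immediately $\textup{MMD}^{2}(P_X,P_Y)=O(p^{-2r+2})$ in both parity cases, so it only remains to produce a matching lower bound on $\surd{\textup{var}(\Delta_{1})}$. Since $\mu_1=\mu_2$ and $\Sigma_1=\Sigma_2=:\Sigma$ force $\tau_1=\tau_2=\tau_3=:\tau$, the explicit formula from Section \ref{sec3.3} for $\textup{var}(\Delta_{1,1})=\textup{var}(\Delta_{1})$ collapses to
\[
\textup{var}(\Delta_{1})=\frac{8\{f^{(1)}(\tau)\}^{2}\textup{tr}(\Sigma^{2})}{p^{2}}\bigg\{\frac{1}{n(n-1)}+\frac{1}{m(m-1)}+\frac{2}{nm}\bigg\}.
\]
Assumption \ref{assumption2} provides $\textup{tr}(\Sigma)=\Theta(p)$, and Cauchy--Schwarz yields $\textup{tr}(\Sigma^{2})\ge\textup{tr}(\Sigma)^{2}/p=\Theta(p)$; the bracketed sample-size factor is $\Theta(N^{-2})$ by Assumption \ref{assumption4}. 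Hence $\surd{\textup{var}(\Delta_{1})}=\Theta(N^{-1}p^{-1/2})$, and
\[
\frac{|\textup{MMD}^{2}(P_X,P_Y)|}{\surd{\textup{var}(\Delta_{1})}}=O(N\,p^{5/2-2r})=o(1)
\]
precisely when $N=o(p^{2r-5/2})$, which is exactly the rate imposed in the corollary in both the $l=2r-1$ and $l=2r$ cases. Slutsky's theorem then yields the conclusion.

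\textbf{Main obstacle.} The only point not wholly automatic is the lower bound $\textup{var}(\Delta_{1})=\Omega(N^{-2}p^{-1})$. The trace lower bound $\textup{tr}(\Sigma^{2})=\Omega(p)$ follows directly from Assumption \ref{assumption2} via Cauchy--Schwarz, but one additionally needs $|f^{(1)}(\tau)|$ to stay bounded away from zero along the limiting sequence of $\tau$. This is a mild non-degeneracy condition on the kernel at the common limit of $\tau_1,\tau_2,\tau_3$ (satisfied by the Gaussian, Laplace, rational-quadratic, and energy-distance kernels for every $\tau>0$) and is implicit throughout Section \ref{sec3.4}; absent it, the first-order term $\Delta_1$ would not dominate and the entire variance-calibration strategy underlying Theorems \ref{theorem3.2}--\ref{theorem3.9} would require separate treatment.
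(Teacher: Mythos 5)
Your proof is correct and follows essentially the same route as the paper: Lemma \ref{lemma3.8} gives $\textup{MMD}^2(P_X,P_Y)=O(p^{-2r+2})$, the order $\textup{var}(\Delta_1)=\Theta(N^{-2}p^{-1})$ makes the deterministic bias term $O(Np^{5/2-2r})=o(1)$ under the stated rates, and Theorem \ref{theorem3.9} plus Slutsky finishes. The paper's own proof is just a two-sentence version of this; your additional remark that $|f^{(1)}(\tau)|$ must stay bounded away from zero is a fair observation about an implicit non-degeneracy assumption the paper uses throughout when asserting $\textup{var}(\Delta_1)=\Theta(N^{-2}p^{-1})$.
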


\begin{remark}
{\color{red}An intriguing observation from Lemma \ref{lemma3.8} is that the discrepancy in the $(l-1)$th order moment has the same effect size as that in the $l$th order moment, where $l=2r$ and $r\ge 2$. }From Corollary \ref{Coro-trivial-1}, when $N=o(p^{2r-{\color{red}1/2}})$ and the first ${\color{red}2r}$ moments of $X$ and $Y$ are equal for $r\geq {\color{red}1}$, the MMD-based test has trivial power. Put differently, the MMD-based test can only detect the discrepancy lying on the first ${\color{red}2r}$ moments when $N=o(p^{2r-{\color{red}1/2}})$. 
\end{remark}

\section{Power analysis}\label{sec:power-inner}
\subsection{When is the power trivial?}\label{sec:power-trivial}
Based on the CLT results, we study the power behavior of the test under the local alternative. We focus on two scenarios: 

S1. There is a discrepancy among the first two moments which satisfies Assumption \ref{assumption6}.

S2. The first $(l-1)$ moments are equal and Assumptions \ref{assumption8}--\ref{assumption9} hold, while there is a discrepancy between the $l$th moments for $l\geq 3$. 

Under S1, in view of Theorem \ref{theorem3.2}, the asymptotic power function of the test is
\begin{equation}\label{power1}
\Phi\bigg(-z_{1-\alpha}+\frac{\Delta_0+T_1}{\surd{\textup{var}({\color{red}\Delta_{1,1}})}}\bigg).    
\end{equation}
where $\Phi$ is the cdf of $N(0,1)$. We observe that:

O1-S1. When $N=o(\surd{p})$ and $\Delta_0=o(p^{-1/2}N^{-1}),$ or 
$N=o(p)$ and $\Delta_0+T_1=o(p^{-1/2}N^{-1}),$ the test has trivial power;

O2-S1. When $N=o(\surd{p})$ and $\Delta_0=\Theta(p^{-1/2}N^{-1}),$ or $N=o(p)$ and $\Delta_0+T_1=\Theta(p^{-1/2}N^{-1})$, the test has nontrivial power between $(0,1)$; 

O3-S1. When $N=o(\surd{p})$ and $\Delta_0=\omega(p^{-1/2}N^{-1}),$ or $N=o(p)$ and $\Delta_0+T_1=\omega(p^{-1/2}N^{-1})$, the test has power approaching one asymptotically.

Under S2, by Theorem \ref{theorem3.9}, the corresponding power function is equal to
\begin{align}\label{power2}
\Phi\bigg(-z_{1-\alpha}+\frac{\text{MMD}^2(P_X,P_Y)}{\surd{\textup{var}({\color{red}\Delta_{1,1}})}}\bigg),\quad l\geq 3.    
\end{align}
We have the following observations:

O1-S2. When $N=o(p^{l-1})$ and $\text{MMD}^2(P_X,P_Y)=o(p^{-1/2}N^{-1}),$ the test has trivial power;

O2-S2. When $N=o(p^{l-1})$ and $\text{MMD}^2(P_X,P_Y)=\Theta(p^{-1/2}N^{-1})$, the test has nontrivial power between $(0,1)$;
    
O3-S2. When $N=o(p^{l-1})$ and $\text{MMD}^2(P_X,P_Y)=\omega(p^{-1/2}N^{-1})$, the test has power approaching one asymptotically.

\begin{table}
\centering
\footnotesize
\def~{\hphantom{0}}
\caption{When do the MMD-based tests have trivial power? }{%
\begin{tabular}{ll}
%\\
\\
Dimension and sample size orders & Common features \\
fixed $N$, growing $p$ &  $\mu_1=\mu_2,\text{tr}(\Sigma_1)=\text{tr}(\Sigma_2)$ \\
$N=o(\surd{p})$ & $\mu_1=\mu_2,\text{tr}(\Sigma_1)=\text{tr}(\Sigma_2)$ \\
$N=o(p^{3/2})$ & $\mu_1=\mu_2,\Sigma_1=\Sigma_2$ and Assumption \ref{assumption8} holds\\
$N=o(p^{2r-{\color{red}1/2}})$ & the first ${\color{red}2r}$ moments are equal and Assumptions \ref{assumption8}--\ref{assumption9} hold\\
$N=o(p)$ & $\Delta_0+T_1=o(p^{-1/2}N^{-1})$ and Assumption \ref{assumption6} holds\\
$N=o(p^{l-1})$ & $\text{MMD}^2(P_X,P_Y)=o(p^{-1/2}N^{-1})$ and Assumption \ref{assumption8} holds
\end{tabular}}
\label{tb2}
\end{table}

As shown in Section \ref{secB.5} in the Appendix, the equality of the first $s$ moments is a sufficient condition for $T_{s-1}=0$. 
Regarding $\Delta_0$, we have the following necessary and sufficient condition for $\Delta_0=0.$
\begin{lemma}\label{lemma-delta0}
Suppose $f$ is strictly decreasing and strictly convex on $[0,+\infty)$. Then
\[
\Delta_{0}=0\text{ if and only if }\mu_{1}=\mu_{2}\text{ and }\textup{tr}(\Sigma_1)=\textup{tr}(\Sigma_2).
\]
\end{lemma}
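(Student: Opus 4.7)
The plan is to exploit the identity
\[
\tau_3 \;=\; \frac{\tau_1+\tau_2}{2} + \frac{\|\mu_1-\mu_2\|_2^2}{p},
\]
which follows directly from the definitions of $\tau_1,\tau_2,\tau_3$. In particular $\tau_3 \ge (\tau_1+\tau_2)/2$, with equality if and only if $\mu_1=\mu_2$.

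Sufficiency is trivial: when $\mu_1=\mu_2$ and $\mathrm{tr}(\Sigma_1)=\mathrm{tr}(\Sigma_2)$, we get $\tau_1=\tau_2=\tau_3$, hence $\Delta_0=0$.

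For necessity, I would chain two monotonicity/convexity inequalities. First, since $f$ is strictly decreasing on $[0,+\infty)$ and $\tau_3\ge(\tau_1+\tau_2)/2$, we have
\[
f(\tau_3)\;\le\;f\!\left(\tfrac{\tau_1+\tau_2}{2}\right),
\]
with equality if and only if $\tau_3=(\tau_1+\tau_2)/2$, i.e.\ $\|\mu_1-\mu_2\|_2^2=0$. Second, strict convexity of $f$ gives
\[
f\!\left(\tfrac{\tau_1+\tau_2}{2}\right)\;\le\;\tfrac{1}{2}\{f(\tau_1)+f(\tau_2)\},
\]
with equality if and only if $\tau_1=\tau_2$, i.e.\ $\mathrm{tr}(\Sigma_1)=\mathrm{tr}(\Sigma_2)$. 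Combining the two yields
\[
2f(\tau_3)\;\le\;f(\tau_1)+f(\tau_2),\qquad\text{so}\qquad \Delta_0\ge 0,
\]
and the chained equality holds iff both individual equalities hold, i.e.\ iff $\mu_1=\mu_2$ and $\mathrm{tr}(\Sigma_1)=\mathrm{tr}(\Sigma_2)$. This closes the ``only if'' direction.

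There is essentially no obstacle here; the whole argument is a two-line application of strict monotonicity together with strict convexity (Jensen's inequality for two points), once the key algebraic identity for $\tau_3$ is noted. The only minor subtlety is to ensure the midpoint $(\tau_1+\tau_2)/2$ lies in the domain $[0,+\infty)$, which is immediate since $\tau_1,\tau_2\ge 0$. No other property of $f$ (such as smoothness or the assumptions used in the asymptotic expansions) is needed for this lemma.
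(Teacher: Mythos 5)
Your proof is correct and is essentially the same argument as the paper's: both chain the strict-convexity inequality $\tfrac12\{f(\tau_1)+f(\tau_2)\}\ge f((\tau_1+\tau_2)/2)$ with the strict-monotonicity inequality $f((\tau_1+\tau_2)/2)\ge f(\tau_3)$ (valid since $\tau_3=(\tau_1+\tau_2)/2+\|\mu_1-\mu_2\|_2^2/p$), and then force both to be equalities. You merely spell out the key identity for $\tau_3$ more explicitly than the paper does.
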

As one can verify, the strict monotonicity and convexity assumption on $f$ is satisfied for the Gaussian/Laplace/rational quadratic kernels and the energy distance.

The result in O1 complements the finding in Lemma 4.1 and Theorem 4.2 of \citet{chakraborty2021new} which state that with $n,m$ fixed and $p\rightarrow+\infty$, the test based on energy distance has trivial power whenever $\mu_1=\mu_2 \text{ and } \text{tr}(\Sigma_1)=\text{tr}(\Sigma_2)$. As seen from O1-S1, the MMD-based test with the kernel satisfying the condition in Lemma \ref{lemma-delta0} remains to have trivial power under $\mu_1=\mu_2 \text{ and } \text{tr}(\Sigma_1)=\text{tr}(\Sigma_2)$ as long as $N=o(\surd{p})$. Moreover, some algebra shows that under $\mu_1=\mu_2 \text{ and } \text{tr}(\Sigma_1)=\text{tr}(\Sigma_2)$,
\begin{equation}\label{T1_2}
T_1=2p^{-2}f^{(2)}(\tau)\|\Sigma_1-\Sigma_2\|_{\rm F}^2.   
\end{equation}
When $N$ grows faster than $o(\surd{p})$ but slower than $p$, our result suggests that the power of the MMD-based test could be either trivial or non-trivial depending on the magnitude of the Frobenius norm of the difference between the two covariance matrices. 

Under $\mu_1=\mu_2$ and $\Sigma_1=\Sigma_2$, we note that 
$\Delta_0=0$, $E(\Delta_2)=T_1=0$, $E(\widetilde{\Delta}_3)-T_2=O(p^{-2})$, $\text{var}(\Delta_2)=o(N^{-2}p^{-1})$, and $\text{var}(\widetilde{\Delta}_3)=O(N^{-1}p^{-3})$. In the regime $N=o(p^{3/2})$, the asymptotic power function of the MMD-based test is thus
$\Phi(-z_{1-\alpha}+T_2/\surd{\textup{var}({\color{red}\Delta_{1,1}})}). $
By Lemma \ref{lemma3.8}, we have $T_2=O(p^{-2})$. Thus when Assumption \ref{assumption8} holds and $N=o(p^{3/2})$, $\mu_1=\mu_2$, $\Sigma_1=\Sigma_2,$ the MMD-based test has trivial power. In other words, when the discrepancy is lying on the third order or higher order moments, it is necessary for $N$ to grow at least as fast as $p^{3/2}$ for the test to have non-trivial power. 

More generally, by Corollary \ref{Coro-trivial-1}, when Assumptions \ref{assumption8}--\ref{assumption9} hold, $N=o(p^{2r-{\color{red}1/2}})$, and the first ${\color{red}2r}$ moments of $X$ and $Y$ are equal for $r\geq {\color{red}1}$, the MMD-based test has trivial power.

\subsection{When is the power approaching one?}
It may seem counter-intuitive to have an upper bound on the growth rate of $N$ (i.e., $N=o(p)$ and $N=o(p^{l-1})$) for the test to have nontrivial power in O2 and O3. These restrictions are from Theorems \ref{theorem3.2} and \ref{theorem3.9}, which enable us to get the asymptotic exact forms of the power functions under these two scenarios. The restrictions are certainly unnecessary for the test to have nontrivial power provided that the discrepancy between the two distributions is large enough. To see this, suppose there is a discrepancy between the first two moments that satisfies Assumption \ref{assumption6}. Consider the expansion
\[
\frac{\textup{MMD}_{n,m}^{2}}{{\color{red}\surd{\widehat{\textup{var}(\Delta_{1,1}})}}}=\frac{\Delta_0+T_1+\Delta_1+\{\widetilde{\Delta}_{2}-E(\widetilde{\Delta}_{2})\}+\{E(\widetilde{\Delta}_{2})-T_1\}}{\surd{\text{var}({\color{red}\Delta_{1,1}})}}{\color{red}\times\{1+o_{p}(1)\}}.   
\]
Recall that $\text{var}(\Delta_{1})=O(p^{-1}N^{-2})$, $\text{var}(\widetilde{\Delta}_{2})=O(p^{-2}N^{-1})$, and $E(\widetilde{\Delta}_{2})-T_1=O(p^{-3/2})$.
If 
\begin{align}\label{eq-pow-1}
\Delta_0+T_1=\omega(\max\{p^{-1/2}N^{-1},p^{-1}N^{-1/2},p^{-3/2}\}), 
\end{align}
then the asymptotic power converges to one. In particular, when $p$ and $N$ are of the same order and the difference lies on the means, the above rate is not only sufficient but also necessary in view of equation (3.11) of \citet{chen2010two} which suggests that under the local alternatives, the power will approach one only if $\|\mu_1-\mu_2\|^2=\omega(p^{1/2}N^{-1}).$ Note that for linear kernel (i.e., $f(x)=-x$), $\Delta_0=2\|\mu_1-\mu_2\|^2/p$, $T_1=0$ and the condition in (\ref{eq-pow-1}) is consistent with theirs. As the order of $\Delta_0$ can be as larger as $\Theta(1)$, we do not need additional restriction on the dimension-and-sample orders
for (\ref{eq-pow-1}) to be satisfied.

\begin{table}
\centering
\footnotesize
\def~{\hphantom{0}}
\caption{When do the MMD-based tests have non-trivial power? }{%
\begin{tabular}{ll}
%\\
\\
Dimension-and-sample orders & Discrepancy between $P_X$ and $P_Y$ \\
$N=o(\surd{p})$ & $\Delta_0=\Omega(p^{-1/2}N^{-1})$ and Assumption \ref{assumption6} holds\\
$N=o(p)$  & $\Delta_0+T_1=\Omega(p^{-1/2}N^{-1})$ and Assumption \ref{assumption6} holds\\
\text{no restriction}  &  $\Delta_0+T_1=\omega(\max\{p^{-1/2}N^{-1},p^{-1}N^{-1/2},p^{-3/2}\})$ and Assumption \ref{assumption6} holds\\
$N=o(p^{l-1})$ & $\text{MMD}^2(P_X,P_Y)=\Omega(p^{-1/2}N^{-1})$ and Assumption \ref{assumption8} holds\\
no restriction & $\text{MMD}^2(P_X,P_Y)=\omega(\max\{p^{-1/2}N^{-1},N^{-1/2}p^{-l/2}\})$ and Assumption \ref{assumption8} holds 
\end{tabular}}
\label{tb3}
\end{table}

More generally, as $\text{MMD}^2(P_X,P_Y)=E(\textup{MMD}_{n,m}^{2})=\Delta_0+\sum_{s=2}^{l-1}E(\Delta_s)+E(\widetilde{\Delta}_l)$, we have the expansion
\[
\frac{\textup{MMD}_{n,m}^{2}}{{\color{red}\surd{\widehat{\textup{var}(\Delta_{1,1}})}}}=\frac{\Delta_1+\sum^{l-1}_{s=2}\{\Delta_s-E(\Delta_s)\}+\{\widetilde{\Delta}_l-E(\widetilde{\Delta}_l)\}+\text{MMD}^2(P_X,P_Y)}{\surd{\textup{var}({\color{red}\Delta_{1,1}})}}{\color{red}\times\{1+o_{p}(1)\}}.
\]
Then under Assumption \ref{assumption8}, the power approaches one asymptotically whenever 
\[
\text{MMD}^2(P_X,P_Y)=\omega(\max\{p^{-1/2}N^{-1},p^{-l/2}N^{-1/2}\}).
\]

\section{Simulations}\label{sec:sim}
\subsection{Accuracy of the normal approximation under the null}
We perform simulation studies to validate the theoretical results established in previous sections. We consider the balanced case $n=m$. As for the kernel $k$, we consider the Gaussian kernel $k(x,y)=\exp(-\|x-y\|_{2}^{2}/\gamma)$, Laplace kernel $k(x,y)=\exp(-\|x-y\|_{2}/\surd{\gamma})$ and the energy distance (up to a scaling factor $\surd{\gamma}$) $k(x,y)=-\|x-y\|_{2}/\surd{\gamma}$ with the bandwidth $\gamma=2p$. We start by verifying the null CLT derived in Section \ref{sec3.3}.
\begin{example}\label{example1}
    We generate i.i.d. samples $X_i=\Sigma^{1/2}U_i+\mu$ for $i=1,\ldots,n$, and $Y_j=\Sigma^{1/2}V_j+\mu$ for $j=1,\ldots,m$ from the following models:
    
    1. $U_i(k)\stackrel{i.i.d.}{\sim}N(0,1)$ and $V_j(k)\stackrel{i.i.d.}{\sim}N(0,1)$ for $k=1,\ldots,p$. Also, $\mu=\mathbf{0}_{p\times 1}=(0,\ldots,0)^{\top}$ and $\Sigma=(\sigma_{ij})_{i,j=1}^{p}$ with $\sigma_{ij}=0.5^{|i-j|}$. 
    
    2. $U_i(k)\stackrel{i.i.d.}{\sim}\text{Poisson}(1)-1$ and $V_j(k)\stackrel{i.i.d.}{\sim}\text{Poisson}(1)-1$ for $k=1,\ldots,p$. Also, $\mu=\mathbf{1}_{p\times 1}=(1,\ldots,1)^{\top}$ and $\Sigma=(\sigma_{ij})_{i,j=1}^{p}$ with $\sigma_{ij}=0.5^{|i-j|}$. 
    
    3. $U_i(k)\stackrel{i.i.d.}{\sim}\text{Exponential}(1)-1$ and $V_j(k)\stackrel{i.i.d.}{\sim}\text{Exponential}(1)-1$ for $k=1,\ldots,p$. Also, $\mu=\mathbf{0}_{p\times 1}$ and $\Sigma=(\sigma_{ij})_{i,j=1}^{p}$ with $\sigma_{ii}=1$ for $i=1,\ldots,p$, $\sigma_{ij}=0.25$ if $1\le|i-j|\le 2$ and $\sigma_{ij}=0$ otherwise. 
\end{example}
Three different high-dimensional settings $(n,p)\in\{(32,1000),(200,200),(1000,100)\}$ are considered, where $n\approx p^{0.5},p,p^{1.5}$ respectively. We compare the standard normal quantiles with the corresponding sample quantiles of the test statistics. As can be seen from Figure \ref{fig:null} in the Appendix, the normal approximation appears to be quite accurate in all cases. 

\subsection{Power behavior}
Next, we analyze the power behavior of the test in the case of Theorem \ref{theorem3.2} where there is a discrepancy among the first two moments. For a given model, we can compute $\Delta_0$, $T_1$ and hence the power function (\ref{power1}). We consider the following two examples. In Example \ref{example2}, the mean vectors differ and the covariance matrices are equal. In Example \ref{example3}, the mean vectors are the same while the covariance matrices differ. 
\begin{example}\label{example2}
$X_i\stackrel{i.i.d.}{\sim}N(\mathbf{0}_{p\times 1},I_{p})$ for $i=1,\ldots,n$, and $Y_j\stackrel{i.i.d.}{\sim}N((2p)^{-1/2}\mathbf{1}_{p\times 1},I_{p})$ for $j=1,\ldots,m$. 
\end{example}
\begin{example}\label{example3}
$X_i\stackrel{i.i.d.}{\sim}N(\mathbf{0}_{p\times 1},I_{p})$ for $i=1,\ldots,n$, and $Y_j\stackrel{i.i.d.}{\sim}N(\mathbf{0}_{p\times 1},\Sigma)$ for $j=1,\ldots,m$, where $\Sigma=(\sigma_{ij})_{i,j=1}^{p}$ with $\sigma_{ij}=0.7^{|i-j|}$.
\end{example}
\begin{figure}
    \centering
    \includegraphics[width=\textwidth]{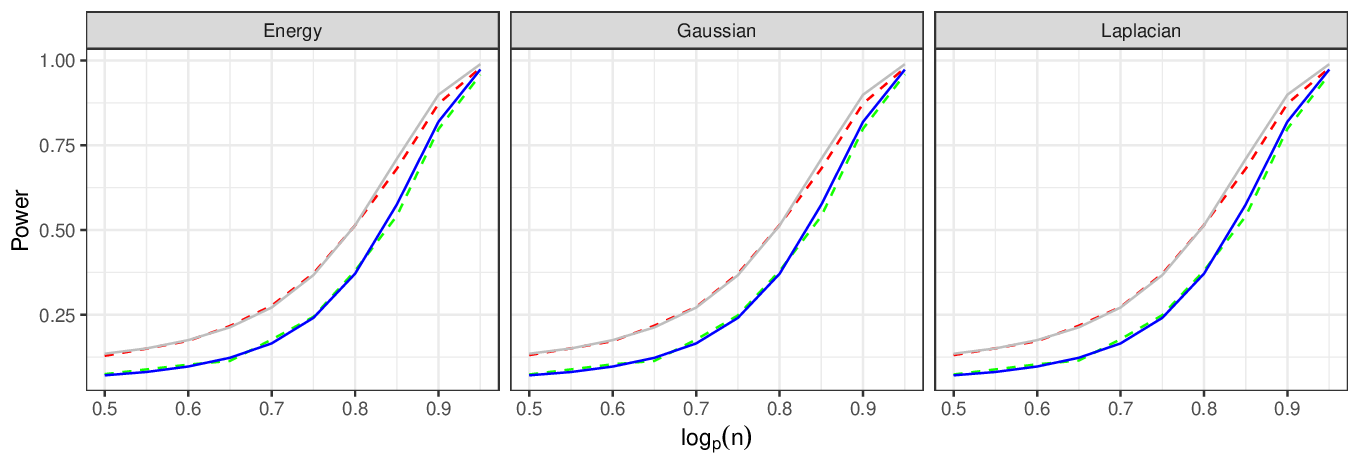}
    \caption{\footnotesize Empirical and theoretical powers for the MMD-based test, where the difference between the two distributions lies on the means. The results are obtained based on 1000 replications. 
    The dashed green, dashed red, solid blue and solid grey lines represent the empirical power curve with $\alpha=0.05$, empirical power curve with $\alpha=0.1$, theoretical power curve with $\alpha=0.05$ and theoretical power curve with $\alpha=0.1$, respectively.}
    \label{fig:nonnull1}
\end{figure}
\begin{figure}
    \centering
    \includegraphics[width=\textwidth]{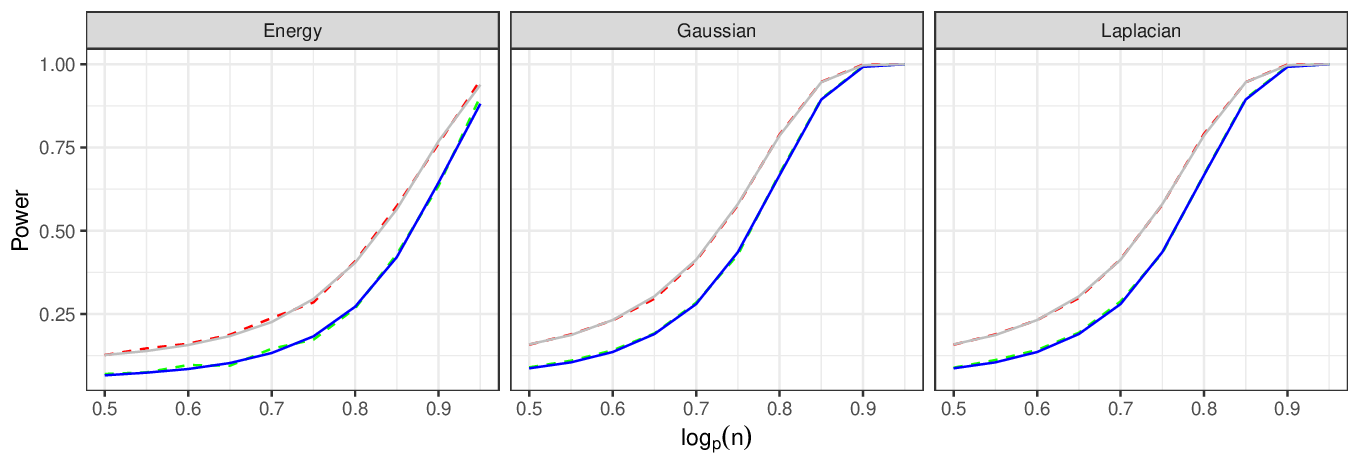}
    \caption{\footnotesize Empirical and theoretical powers for the MMD-based test, where the difference between the two distributions lies on the second-order moments. The results are obtained based on 1000 replications. 
    The dashed green, dashed red, solid blue and solid grey lines represent the empirical power curve with $\alpha=0.05$, empirical power curve with $\alpha=0.1$, theoretical power curve with $\alpha=0.05$ and theoretical power curve with $\alpha=0.1$, respectively.}
    \label{fig:nonnull2}
\end{figure}
We try $p=800$ and $n=p^{d}$, where $d$ ranges from 0.5 to 0.95 with the spacing 0.05. We plot the power of these tests when the significance level $\alpha=0.05$ or $0.1$. Figures \ref{fig:nonnull1} and \ref{fig:nonnull2} show that the empirical power is consistent with that predicted by our theory.

Finally, we validate the power behavior of the test in the case where the difference between the two distributions lies on the higher order moments. 
\begin{example}
$X_{i}(k)\stackrel{i.i.d.}{\sim}N(1,1)$ for $i=1,\ldots,n;k=1,\ldots,p$, and $Y_{j}(k)\stackrel{i.i.d.}{\sim}\text{Poisson}(1)$ for $j=1,\ldots,m;k=1,\ldots,p$. We try $p=50$, and $n=p^{d}$ where $d$ ranges from $1$ to $1.9$ with the spacing $0.1$. \end{example}
\begin{figure}
    \centering
    \includegraphics[width=\textwidth]{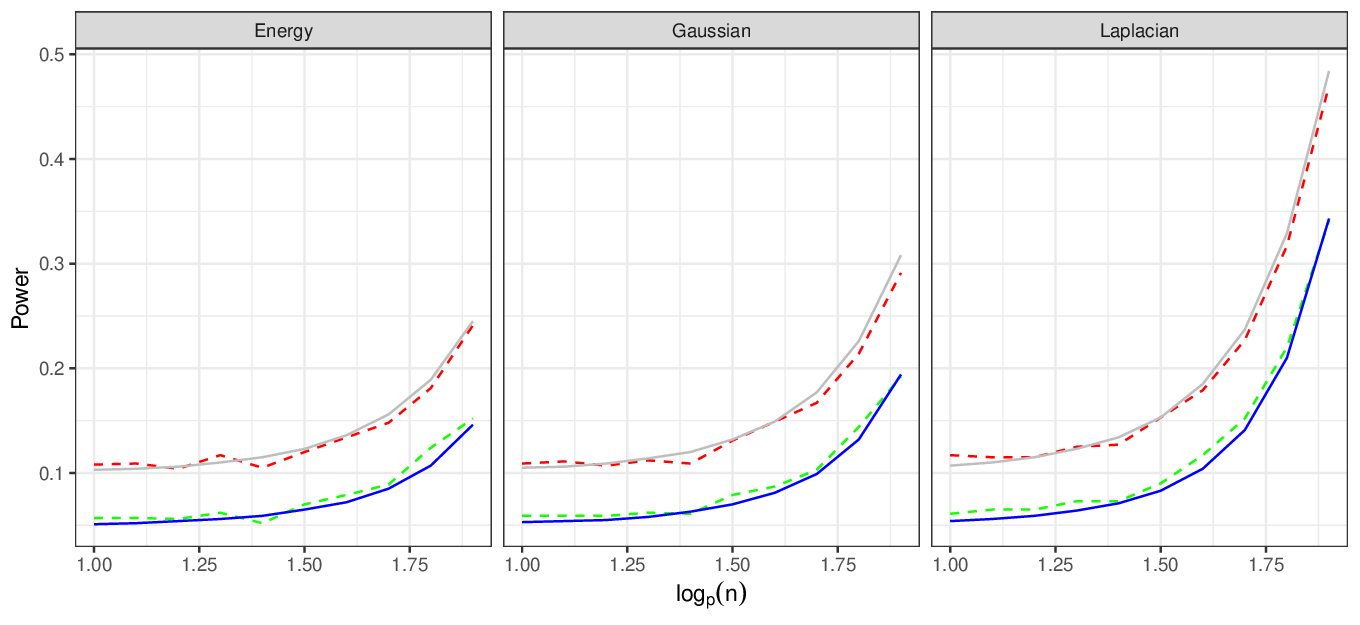}
    \caption{\footnotesize Empirical and theoretical powers for the MMD-based test, where the difference between the two distributions lies on the third-order moments. The results are obtained based on 1000 replications.
    The dashed green, dashed red, solid blue and solid grey lines represent the empirical power curve with $\alpha=0.05$, empirical power curve with $\alpha=0.1$, theoretical power curve with $\alpha=0.05$ and theoretical power curve with $\alpha=0.1$, respectively.}
    \label{fig:nonnull3}
\end{figure}
Since $N=o(p^2)$, the asymptotic power function is given in (\ref{power2}). As seen from Figure \ref{fig:nonnull3}, the empirical power curve matches with the asymptotic power, where we estimate the population MMD in the power function using Monte Carlo method.
Besides, we can verify that Assumptions \ref{assumption8}--\ref{assumption9} hold as $X$ and $Y$ have independent entries.
When $N= o(p^{3/2})$, the empirical power is close to the nominal level, which is consistent with the result in Corollary \ref{Coro-trivial-1}. A recent result in Proposition 3.4.3 of \citet{gao2021two} requires $p=o(N^{1/3})$ for the MMD-based test to have asymptotic power approaching one when the difference between the two distributions lies on the third moments. Our theory and numerical study suggest that the MMD-based test will have nontrivial power when $p=O(N^{2/3})$.

\section{Impact of kernel and bandwidth on power}\label{sec:impact-kernel}
In this section, we investigate the impact of kernel and bandwidth on the asymptotic and finite sample power behaviors in the regime $N=o(p)$. Our results shed some lights on how to select the optimal kernel that maximizes the power with respect to some specific alternatives. We consider the case where $\mu_1=\mu_2, \text{tr}(\Sigma_1)=\text{tr}(\Sigma_2)$ but $\Sigma_1\neq \Sigma_2$. In this case, we have $\tau=\tau_1=\tau_2=\tau_3$ and $\Delta_0=0$. By the expression of $T_1$ in (\ref{T1_2}), the asymptotic power depends on 
\[
\frac{T_1}{\surd{\text{var}({\color{red}\Delta_{1,1}})}}=\frac{f^{(2)}(\tau)}{|f^{(1)}(\tau)|}\frac{\gamma^{-1}\|\Sigma_1-\Sigma_2\|_{\rm F}^2}{\surd{\{\frac{2}{n(n-1)}\text{tr}(\Sigma_{1}^{2})+\frac{2}{m(m-1)}\text{tr}(\Sigma_{2}^{2})+\frac{4}{nm}\text{tr}(\Sigma_{1}\Sigma_{2})\}}}.
\]
In Example \ref{example3}, the bandwidth $\gamma=2p$ and $\tau=1$. Hence $f^{(2)}(\tau)/|f^{(1)}(\tau)|=1$ for both the Gaussian kernel and Laplace kernel, which explains why the asymptotic powers for the Gaussian kernel and Laplace kernel are the same in Fig. \ref{fig:nonnull2}. 

For fixed bandwidth, the optimal kernel is the one that maximizes $h_1(f)=f^{(2)}(\tau)/|f^{(1)}(\tau)|$. 
Define $h_2(\gamma)=f^{(2)}(2\text{tr}(\Sigma_1)/\gamma)\{|f^{(1)}(2\text{tr}(\Sigma_1)/\gamma)|\gamma\}^{-1}$. For fixed kernel, the bandwidth affects the power through $h_2(\gamma)$. For the energy distance, the bandwidth $\gamma$ only acts as a scaling factor, and thus does not affect the power. For other choices, the power increases when the bandwidth decreases; see Table \ref{tb4}. Below we provide some numerical evidence on how the choice of kernels and bandwidth affects the power, where we follow the setting in Example \ref{example3}.  

\begin{table}
\centering
\footnotesize
\def~{\hphantom{0}}
\caption{Impact of bandwidth on the asymptotic power for different kernels. }{%
\begin{tabular}{llll}
%\\
\\
Kernel & $f(x)$ & $h_2(\gamma)$, where $C=2\text{tr}(\Sigma_1)$ & Impact of bandwidth $\gamma$ on power\\
Gaussian & $\exp(-x)$ & $\gamma^{-1}$ & The power increases when $\gamma$ decreases \\
Laplace & $\exp(-\surd{x})$  & $\{(C\gamma)^{-1/2}+C^{-1}\}/2$ & The power increases when $\gamma$ decreases\\
Rational quadratic & $(1+x)^{-\alpha}$ & $(\alpha+1)/(\gamma+C)$ & The power increases when $\gamma$ decreases\\
Energy & $-\surd{x}$ & $(2C)^{-1}$ & The power is unrelated to $\gamma$
\end{tabular}}
\label{tb4}
\end{table}

\begin{example}
$X_i\stackrel{i.i.d.}{\sim}N(\mathbf{0}_{p\times 1},I_{p})$ for $i=1,\ldots,n$, and $Y_j\stackrel{i.i.d.}{\sim}N(\mathbf{0}_{p\times 1},\Sigma)$ for $j=1,\ldots,m$, where $\Sigma=(\sigma_{ij})_{i,j=1}^{p}$ with $\sigma_{ij}=0.5^{|i-j|}$. We try $p=800$, and $n=m=p^{d}$ where $d$ ranges from 0.5 to 0.95 with the spacing 0.05. We use the significance level $\alpha=0.05$ in the following two scenarios: 

1. Fixing the bandwidth $\gamma=p$, we compare the power of the MMD-based tests using the Gaussian, Laplace, rational quadratic (RQ) kernel (with $\alpha=1/2$) and energy distance. 

2. We compare the power of the MMD-based tests using the Gaussian kernel with different bandwidth $\gamma\in\{0.5p,p,1.5p,2p\}$. 
\end{example}

\begin{figure}
    \centering
    \includegraphics[width=\textwidth]{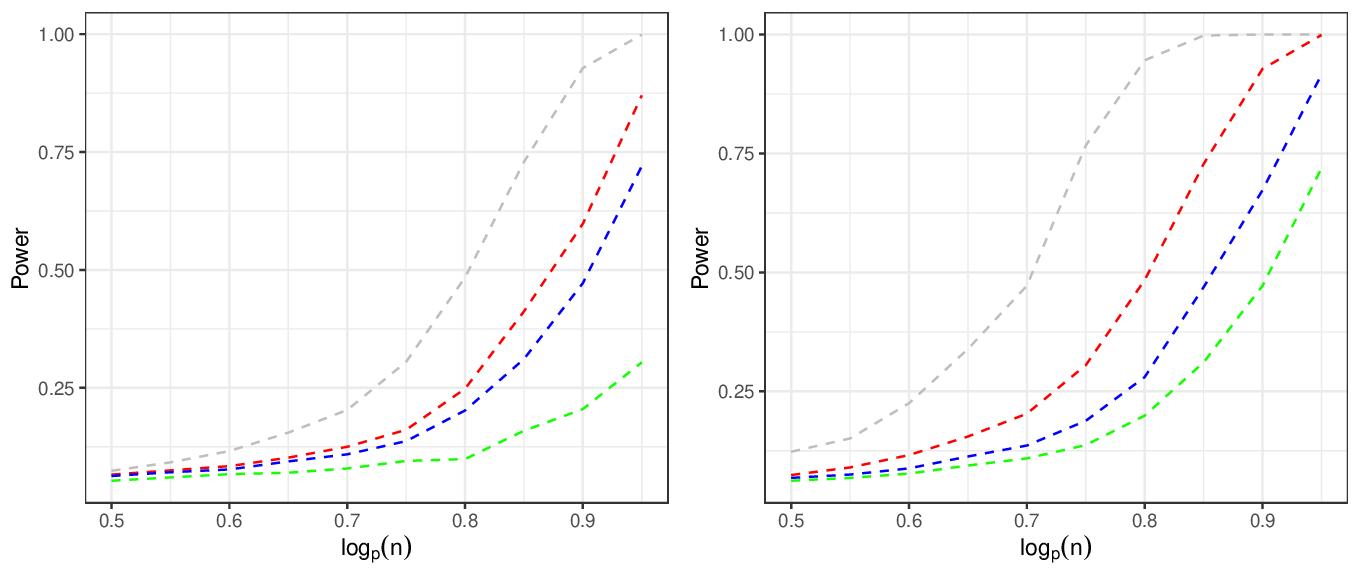}
    \caption{\footnotesize Impact of the kernel (left) and bandwidth (right) on the empirical power, where the results are based on 1000 replications. Left: the dashed grey, red, blue and green lines represent the Gaussian, Laplace, rational quadratic kernel and energy distance, respectively. Right: the dashed grey, red, blue and green lines represent the bandwidth $0.5p$, $p$, $1.5p$ and $2p$, respectively. }
    \label{fig:case2}
\end{figure}

In the first scenario, the bandwidth $\gamma=p$ is fixed and $\tau=2$. Simple algebra shows that
$h_1(\exp(-x))=1>h_1(\exp(-\surd{x}))=0.6>h_1((1+x)^{-1/2})=0.5>h_1(-\surd{x})=0.25$.
As seen from Figure \ref{fig:case2}, the MMD-based test using Gaussian kernel has the highest power, followed by the Laplace kernel and the rational quadratic kernel. The energy distance-based test has the lowest power. In the second scenario, the empirical power increases when the bandwidth decreases. The empirical observations from Fig. \ref{fig:case2} match with the theoretical results in Table \ref{tb4}. 

In Section \ref{secE} of the Appendix, we consider the case where $\Sigma_1=\Sigma_2$ but $\mu_1\neq \mu_2$. It is shown that the asymptotic power of the MMD-based test is not affected by the kernels and bandwidths, which is consistent with the finding in Fig. \ref{fig:nonnull1} associated with Example \ref{example2}.

\section{Discussion}
To conclude, we mention two future research directions. First, as Hilbert-Schmidt independence criterion (HSIC) can be viewed as MMD applied to the joint distribution between $X$ and $Y$ and the product of the two marginals, we expect similar results to hold for HSIC. A recent work along this direction is \citet{han2021generalized}. Second, it would be intriguing to study the high-dimensional behaviors of other popular discrepancy measures in the literature, such as the kernelized Stein discrepancy {\color{red}\citep{chwialkowski2016kernel,liu2016kernelized}} and the Wasserstein distance. 

\newpage

\begin{appendices}
\counterwithin{lemma}{section}
\counterwithin{figure}{section}
\counterwithin{example}{section}

\section{Some useful results}\label{secA}
\begin{lemma}\label{lemma-expansion}
Consider a function of the form $h(x)=g((a+x)^{1/2})$ for $a>0$ and $x\geq -a$, where $g$ is a real-valued function defined on $[0,+\infty)$. Suppose $$\sup_{1\leq s\leq l+1}\sup_{x\geq 0}|g^{(s)}(x)|<\infty.$$
Then we can write $h$ as
\begin{align*}
h(x)=\sum^{l}_{s=0}\frac{h^{(s)}(0)x^s}{s!}+c_{l+1,a}(x)x^{l+1}, \quad \sup_{x\geq -a}|c_{l+1,a}(x)|\leq C,    
\end{align*}
for some constant $C>0$ and any $x\geq -a$. The subscripts of the function $c(x)$ are used to indicate the dependency on $l+1$ and $a$. 
\end{lemma}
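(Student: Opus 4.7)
The plan is to define the remainder coefficient implicitly by $c_{l+1,a}(x) = [h(x) - \sum_{s=0}^{l} h^{(s)}(0) x^s/s!]/x^{l+1}$ for $x\neq 0$ (extended by continuity at $x=0$ using that $h$ is $C^{l+1}$ in a neighborhood of $0$ since $a>0$), and to establish the uniform bound by splitting $[-a,\infty)$ into a region $[-a/2,\infty)$ bounded away from the singularity of $\phi(x)=(a+x)^{1/2}$ and a short boundary region $[-a,-a/2]$. The principal obstacle is exactly this singularity: the derivatives of $\phi$ satisfy $\phi^{(j)}(x)\propto (a+x)^{-(2j-1)/2}$, so $h^{(l+1)}$ is in general unbounded as $x\to -a$, and the standard Taylor-remainder estimate using a global sup-norm of $h^{(l+1)}$ cannot be invoked on the full domain.

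On the region $x\geq -a/2$ I would use Fa\`{a} di Bruno's formula, which expresses $h^{(k)}(x)$ as a finite sum over partitions $\pi$ of $\{1,\ldots,k\}$ of terms of the form $g^{(|\pi|)}(\phi(x))\prod_{B\in\pi}\phi^{(|B|)}(x)$. Since $(a+x)^{-(2j-1)/2}\leq (a/2)^{-(2j-1)/2}$ throughout this region, each $\phi^{(j)}$ is bounded by a constant depending only on $a$ and $j$; and since $\phi(x)\geq 0$, the hypothesis $\sup_{u\geq 0}|g^{(s)}(u)|=:M_s<\infty$ for $1\leq s\leq l+1$ applies at $u=\phi(x)$. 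Combining these gives a uniform bound $\sup_{x\geq -a/2}|h^{(l+1)}(x)|\leq D$ for some constant $D$ depending on $a,l$, and the $M_s$; the integral form of Taylor's theorem then yields $|h(x)-T_l(x)|\leq D|x|^{l+1}/(l+1)!$, which bounds $|c_{l+1,a}|$ throughout $[-a/2,\infty)$.

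On the boundary region $x\in[-a,-a/2]$ the argument is simpler because $|x|^{l+1}\geq (a/2)^{l+1}$ is bounded away from zero, so it suffices to produce any sup-bound on $|h-T_l|$ there. The polynomial $T_l$ is trivially bounded on this compact interval, and since $\phi([-a,-a/2])=[0,\sqrt{a/2}]$, the mean value theorem together with the bound on $g^{(1)}$ gives $|g(\phi(x))|\leq |g(0)|+M_1\sqrt{a/2}<\infty$. Dividing by the lower bound on $|x|^{l+1}$ shows that $|c_{l+1,a}|$ is bounded on $[-a,-a/2]$ as well, and combining the two regions produces the claimed uniform bound $\sup_{x\geq -a}|c_{l+1,a}(x)|\leq C$. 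The only step requiring real care is the Fa\`{a} di Bruno bookkeeping to confirm that the constant $D$ can be taken independent of $x$; this is mechanical once the individual $\phi^{(j)}$ factors have been bounded as above.
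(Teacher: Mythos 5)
Your proposal is correct, and it handles the one genuinely delicate point of the lemma --- the singularity of $h^{(l+1)}$ at $x=-a$ --- by a different route than the paper. The paper works on $[-a,0]$ with the integral form of the Taylor remainder, writes $h^{(l+1)}(t)=\sum_i b_i(t)(a+t)^{-c_i}$ with $0\le c_i\le l+1/2$ and $b_i$ bounded, checks that the improper integral $\int_{-a}^0 h^{(l+1)}(t)(a+t)^l\,dt$ converges (so $r(-a)/(-a)^{l+1}$ is finite), and then invokes continuity of $r(x)/x^{l+1}$ on the compact interval together with the extreme value theorem. You instead split at $-a/2$: on $[-a/2,\infty)$ you bound $h^{(l+1)}$ uniformly via Fa\`{a} di Bruno (which is essentially the same derivative bookkeeping the paper hides behind ``some algebra yields''), and on $[-a,-a/2]$ you avoid the remainder formula entirely, observing that $|x|^{l+1}\ge (a/2)^{l+1}$ so a crude sup bound on $|h-T_l|$ --- obtained from $|g(\phi(x))|\le |g(0)|+M_1\sqrt{a/2}$ and the boundedness of the Taylor polynomial on a compact set --- already yields the bound on $c_{l+1,a}$. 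Your treatment of the boundary region is more elementary and arguably more robust (it never needs the precise exponents $c_i\le l+1/2$ or the integrability of the singular remainder integrand), while the paper's argument is slightly more uniform in that a single remainder representation covers all of $[-a,0]$. Both are complete; the only step you flag as ``mechanical'' (the Fa\`{a} di Bruno constant being independent of $x$) is indeed routine given that every $\phi^{(j)}$ factor is bounded on $[-a/2,\infty)$ and every $g^{(|\pi|)}$ factor with $1\le|\pi|\le l+1$ is bounded by hypothesis.
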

\begin{proof}
Our goal is to show for any $x\geq -a$
\begin{align*}
\bigg|h(x)-\sum^{l}_{s=0}\frac{h^{(s)}(0)x^s}{s!}\bigg|\leq C|x|^{l+1}    
\end{align*}
for some universal constant $C>0.$ First of all, some algebra yields that
\begin{align}\label{eq-der-h}
h^{(l+1)}(x)=\sum^{R}_{i=1}\frac{b_i(x)}{(a+x)^{c_i}},    
\end{align}
where $0\leq c_i\leq l+1/2$ for all $1\leq i\leq R,$ and $\sup_{1\leq i\leq R}\sup_{x\geq -a}|b_i(x)|<\infty$ under our assumption. 

When $x\geq 0$, the result holds by using the Taylor expansion with Lagrange remainder. We consider the case where $-a\leq x\leq 0.$ Using the Taylor expansion with integral remainder, we have
\begin{align*}
h(x)-\sum^{l}_{s=0}\frac{h^{(s)}(0)x^s}{s!}=\int^{x}_{0}\frac{h^{(l+1)}(t)(x-t)^l}{l!} dt=r(x).
\end{align*}
We show that $r(x)/x^{l+1}$ is well defined at the two boundary points $0$ and $-a$. By the L'Hospital's rule,
\begin{align*}
&\lim_{x\rightarrow 0^-}\bigg|\frac{r(x)}{x^{l+1}}\bigg|=\bigg|\frac{h^{(l+1)}(0)}{(l+1)!}\bigg|.
\end{align*}
On the other hand, when $x=-a,$ we have 
\begin{align*}
r(-a)=\frac{(-1)^{l+1}}{l!} \int^{0}_{-a}h^{(l+1)}(t)(a+t)^l dt.    
\end{align*}
By (\ref{eq-der-h}), it is not hard to verify that 
\begin{align*}
\bigg|\frac{r(-a)}{(-a)^{l+1}}\bigg|<\infty.    
\end{align*}
Therefore, the function $r(x)/x^{l+1}$ is continuous on the closed interval $[-a,0]$. By the extreme value theorem, there must exist a $C>0$ such that $|r(x)/x^{l+1}|\leq C.$ The conclusion thus follows by combining the results for $x\geq 0$ and $-a\leq x\leq 0$.
\end{proof}

\begin{lemma}
\label{lemmaa2}
Suppose $U$ is a random vector with independent entries $\{u_i\}_{i=1}^{q}$ such that $E(u_i)=0$ and $E(u_i^2)=1$. Then if $\max_{i}E(u_i^4)<\infty$ and $A_{q\times q}$ is a deterministic symmetric matrix, we have
\begin{align*}
\textup{var}(U^\top A U)=2\textup{tr}(A^2)+\sum_{i=1}^{q} \delta_i a_{ii}^2,
\end{align*}
where $\delta_i=E(u_i^4)-3$. 
\end{lemma}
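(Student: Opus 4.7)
The plan is to compute $\operatorname{var}(U^\top A U)$ directly by expanding the quadratic form as a four-fold sum and then classifying the resulting terms $E(u_i u_j u_k u_l)$ according to how the indices coincide. Because the entries of $U$ are independent with $E(u_i)=0$ and $E(u_i^2)=1$, this expectation is nonzero only in two regimes: either all four indices are equal, giving $E(u_i^4)$; or the indices split into exactly two distinct pairs, giving $1$. Every other index pattern contains at least one lone factor $u_r$ whose mean vanishes.

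First I would record the elementary identity $E(U^\top A U)=\sum_{i,j} a_{ij}E(u_i u_j)=\operatorname{tr}(A)$, which will be subtracted at the end. Next I would expand
\begin{equation*}
E\bigl[(U^\top A U)^2\bigr]=\sum_{i,j,k,l} a_{ij}a_{kl}\,E(u_iu_ju_ku_l)
\end{equation*}
and isolate the contributions: the diagonal term $\sum_i a_{ii}^2\,E(u_i^4)$; and three ``two-pair'' contributions corresponding to the pairings $\{i=j,k=l,i\neq k\}$, $\{i=k,j=l,i\neq j\}$, and $\{i=l,j=k,i\neq j\}$. These produce, respectively, $\sum_{i\neq k} a_{ii}a_{kk}$, $\sum_{i\neq j} a_{ij}^2$, and $\sum_{i\neq j} a_{ij}a_{ji}$. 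Invoking the symmetry $a_{ij}=a_{ji}$ merges the latter two into $2\sum_{i\neq j}a_{ij}^2$.

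Now I would rewrite the off-diagonal sums by adding and subtracting their diagonal counterparts, using $\sum_{i\neq k}a_{ii}a_{kk}=(\operatorname{tr} A)^2-\sum_i a_{ii}^2$ and $\sum_{i\neq j}a_{ij}^2=\operatorname{tr}(A^2)-\sum_i a_{ii}^2$. Combining everything,
\begin{equation*}
E\bigl[(U^\top A U)^2\bigr]=\sum_i a_{ii}^2\bigl(E(u_i^4)-3\bigr)+(\operatorname{tr} A)^2+2\operatorname{tr}(A^2).
\end{equation*}
Subtracting $(\operatorname{tr} A)^2=(EU^\top A U)^2$ yields the claim with $\delta_i=E(u_i^4)-3$.

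There is no real obstacle here beyond careful bookkeeping; the one spot that requires a second glance is verifying that the three distinct two-pair patterns above are genuinely disjoint and together exhaust the nonvanishing cases with two pairs, which is why each contributes separately and, after symmetry of $A$, yields the coefficient $2$ on $\operatorname{tr}(A^2)$. The existence of each expectation is guaranteed by $\max_i E(u_i^4)<\infty$ and Cauchy--Schwarz for the cross terms.
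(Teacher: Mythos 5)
Your proof is correct and follows essentially the same route as the paper: expand $E\{(U^\top A U)^2\}$ as a four-fold sum, classify the nonvanishing index patterns (all-equal versus the three two-pair configurations), use the symmetry of $A$ to merge two of them, and subtract $(\operatorname{tr} A)^2$. The bookkeeping and the final identity match the paper's computation exactly.
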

\begin{proof}
We have
\begin{align*}
E(U^\top A U)=\sum_{i,j}a_{ij}E(u_iu_j)=\text{tr}(A),    
\end{align*}
and
\begin{align*}
E\{(U^\top A U)^2\}&=\sum_{i_1,j_1}\sum_{i_2,j_2}a_{i_1j_1}a_{i_2j_2}E(u_{i_1}u_{i_2}u_{j_1}u_{j_2})\\
&=\sum_i E(u_i^4)a_{ii}^2+2\sum_{i\neq j}a_{ij}^2+\sum_{i\neq j}a_{ii}a_{jj}
\\&=\sum_i\delta_i a_{ii}^2+2\text{tr}(A^2)+\{\text{tr}(A)\}^2.
\end{align*}
The result thus follows.
\end{proof}

\begin{lemma}
\label{lemmaa3}
Suppose $U$ is a random vector with independent entries $\{u_{i}\}_{i=1}^{q}$ such that $E(u_{i})=0$. Then if $\max_i E(|u_{i}|^{l})\leq \mu_l<\infty$ for $l\le L$ and $a_{q\times 1}$ is a deterministic column vector, we have
$$E\{(a^{\top}U)^{L}\}=O(\|a\|_{2}^{L}).$$
\end{lemma}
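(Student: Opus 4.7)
The plan is to expand $(a^\top U)^L$ via the multinomial theorem, use independence and the mean-zero condition to discard most terms, and then show that the surviving terms admit a clean bound in $\|a\|_2^L$ with a constant that depends only on $L$ and the moment bounds $\mu_2, \ldots, \mu_L$.

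First I would write
\[
E\{(a^\top U)^L\} \;=\; \sum_{\substack{k_1,\ldots,k_q\ge 0\\ k_1+\cdots+k_q=L}}\binom{L}{k_1,\ldots,k_q}\Bigl(\prod_{i=1}^q a_i^{k_i}\Bigr)\prod_{i=1}^q E(u_i^{k_i}),
\]
where the product form of the expectations comes from the independence of the entries of $U$. Since $E(u_i)=0$, any multi-index $(k_1,\ldots,k_q)$ containing some $k_i=1$ contributes zero, so the sum is effectively restricted to multi-indices in which every nonzero $k_i$ satisfies $k_i\ge 2$. Consequently, the number $m$ of nonzero $k_i$'s is at most $L/2$, and $|E(u_i^{k_i})|\le\mu_{k_i}\le M:=\max_{2\le l\le L}\mu_l$, which is a constant depending only on $L$.

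The key step is bounding, for a fixed pattern of exponents $(k_1',\ldots,k_m')$ with $k_j'\ge 2$ and $\sum_j k_j'=L$, the sum
\[
S \;=\; \sum_{\substack{i_1,\ldots,i_m\\ \text{distinct}}} |a_{i_1}|^{k_1'}\cdots|a_{i_m}|^{k_m'}.
\]
Using $|a_i|^{k_j'}=|a_i|^2\cdot|a_i|^{k_j'-2}\le|a_i|^2\,\|a\|_2^{k_j'-2}$, we can extract a factor $\|a\|_2^{\sum_j(k_j'-2)}=\|a\|_2^{L-2m}$, leaving
\[
S \;\le\; \|a\|_2^{L-2m}\sum_{i_1,\ldots,i_m}|a_{i_1}|^2\cdots|a_{i_m}|^2 \;\le\; \|a\|_2^{L-2m}\cdot(\|a\|_2^2)^m \;=\; \|a\|_2^{L},
\]
where the middle inequality drops the distinctness constraint. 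Multiplying by $M^m$ and by the multinomial coefficient (both bounded by constants depending only on $L$), and summing over the finitely many exponent patterns, produces a bound $C_L\|a\|_2^L$ for some $C_L$ independent of $q$ and $a$, which is the desired conclusion.

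There is no real obstacle here beyond careful bookkeeping; the one point worth being slightly careful about is that the moment bound is uniform in $i$ (this is what $\max_i E(|u_i|^l)\le\mu_l$ supplies), so that the constants $M$ and $C_L$ truly depend only on $L$ and $\mu_2,\ldots,\mu_L$ and not on the dimension $q$ or the particular vector $a$.
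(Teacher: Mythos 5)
Your proof is correct and follows essentially the same route as the paper's: expand the power, use independence and mean-zero to restrict to exponent patterns with all nonzero exponents at least $2$, and bound each surviving factor via $\sum_i |a_i|^{k}\le \|a\|_2^{k}$ for $k\ge 2$ (the paper phrases this as the norm monotonicity $\|a\|_{k}\le\|a\|_2$), yielding $C_L\|a\|_2^L$ with a constant depending only on $L$ and the moment bounds.
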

\begin{proof} Note that $\|a\|_p\leq \|a\|_q$ for $p\geq q\geq 1$, where $\|a\|_{p}=(\sum_{i}|a_{i}|^{p})^{1/p}$. We have
\begin{align*}
    |E\{(a^{\top}U)^{L}\}|&=\bigg|\sum_{i_1,\ldots,i_L}\prod_{l=1}^{L}a_{i_{l}}\times E\bigg(\prod_{l=1}^{L}u_{i_{l}}\bigg)\bigg|\le\sum_{\{l_{j}\}:l_{j}\ge 2,\sum_{j}l_{j}=L}C\prod_{j}\|a\|_{l_{j}}^{l_{j}}\mu_{l_{j}}\\
    &\leq C\sum_{\{l_{j}\}:l_{j}\ge 2,\sum_{j}l_{j}=L}\prod_{j}\|a\|_{2}^{l_{j}}\mu_{l_{j}}\\
    &\leq C\|a\|_{2}^{L}\times\sum_{\{l_{j}\}:l_{j}\ge 2,\sum_{j}l_{j}=L}\prod_{j}\mu_{l_{j}}=O(\|a\|_{2}^{L}),
\end{align*}
where $C$ is a constant related to $L$ only. 
\end{proof}

\begin{lemma}
\label{lemmaa4}
Suppose $U$ and $V$ are two independent but not necessarily identically distributed random vectors taking values in $\mathbb{R}^{q}$. The entries of $U$ and $V$ are also independent, and we denote the $i$-th entry of $U$ and $V$ by $u_{i}$ and $v_{i}$ respectively. Moreover, $E(u_{i})=E(v_{i})=0$, and $E(u_{i}^{2})=E(v_{i}^{2})=1$. Then if $\max_i E(|u_{i}|^{L})<\infty$ for $L\ge 0$, $\max_i E(|v_{i}|^{L})<\infty$ and $A_{q\times q}$ is a deterministic but not necessarily symmetric matrix, we have 
$$E\{(U^{\top}AV)^{L}\}=O(\{\textup{tr}(AA^{\top})\}^{L/2}).$$
\end{lemma}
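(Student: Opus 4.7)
The plan is to reduce the bilinear form $U^\top A V$ to a linear form in $V$ by conditioning on $U$, then apply Lemma \ref{lemmaa3} twice. Specifically, conditional on $U$, the vector $a := A^\top U \in \mathbb{R}^q$ is deterministic and $U^\top A V = a^\top V$. Since the entries of $V$ are independent with mean zero and $\max_i E(|v_i|^L) < \infty$, Lemma \ref{lemmaa3} applied to $V$ (conditional on $U$) yields
\[
|E\{(U^\top A V)^L \mid U\}| \le C \|A^\top U\|_2^L = C (U^\top A A^\top U)^{L/2}
\]
for a constant $C$ depending only on $L$ and the moment bounds for $V$. Taking expectation over $U$, the problem reduces to showing
\[
E[(U^\top A A^\top U)^{L/2}] = O(\{\textup{tr}(AA^\top)\}^{L/2}).
\]

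To handle this quadratic form in $U$, I would write $U^\top A A^\top U = \sum_{j=1}^{q} w_j^2$ where $w_j := (A^\top U)_j = \sum_{i} A_{ij} u_i$, so that each $w_j$ is itself a linear functional of the independent entries of $U$ to which Lemma \ref{lemmaa3} applies. By Lemma \ref{lemmaa3},
\[
E(w_j^L) = O\bigl(\|A_{\cdot j}\|_2^L\bigr),
\]
where $A_{\cdot j}$ is the $j$-th column of $A$. For $L \ge 2$, apply Minkowski's inequality in the $L^{L/2}$ norm to the sum of the non-negative random variables $w_j^2$:
\[
\Bigl\|\sum_{j=1}^{q} w_j^2\Bigr\|_{L^{L/2}} \le \sum_{j=1}^{q} \|w_j^2\|_{L^{L/2}} = \sum_{j=1}^{q} \|w_j\|_{L^{L}}^2 = O\Bigl(\sum_{j=1}^{q}\|A_{\cdot j}\|_2^2\Bigr) = O(\textup{tr}(AA^\top)).
\]
Raising both sides to the $L/2$ power gives $E[(U^\top A A^\top U)^{L/2}] = O(\{\textup{tr}(AA^\top)\}^{L/2})$, which combined with the conditional bound completes the proof. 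The boundary case $L = 1$ is handled by Jensen's inequality applied to the concave function $\sqrt{\cdot}$, namely $E[(U^\top A A^\top U)^{1/2}] \le \{E(U^\top A A^\top U)\}^{1/2} = \{\textup{tr}(AA^\top)\}^{1/2}$, while $L=0$ is trivial.

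The main obstacle I anticipate is the last step, namely controlling fractional powers of the non-negative quadratic form $\|A^\top U\|_2^2$. A naive moment expansion of $(U^\top A A^\top U)^{L/2}$ for even $L$ is tractable but combinatorially heavy; using Minkowski in $L^{L/2}$ and then invoking Lemma \ref{lemmaa3} on the scalar linear forms $w_j$ sidesteps that expansion cleanly and gives the sharp Frobenius-norm bound in a single stroke. For odd $L$, one could alternatively use the monotonicity of $L^p$ norms to reduce to an even integer power, but the Minkowski approach handles all integer $L \ge 2$ uniformly.
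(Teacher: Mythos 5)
Your proof is correct, and its first half coincides exactly with the paper's: both condition on $U$, treat $a=A^{\top}U$ as deterministic, apply Lemma \ref{lemmaa3} to the linear form in $V$, and thereby reduce the problem to bounding $E\{(U^{\top}AA^{\top}U)^{L/2}\}$. Where you genuinely diverge is the second half. The paper disposes of that moment by citing Lemma A.1 of \citet{bai1998no} (and must remark that the identical-distribution assumption there can be dropped by inspecting that proof), whereas you give a self-contained argument: writing $U^{\top}AA^{\top}U=\sum_{j}w_j^2$ with $w_j=A_{\cdot j}^{\top}U$, applying the triangle inequality in $L^{L/2}$ (valid since $L/2\geq 1$), and feeding each scalar linear form $w_j$ back into Lemma \ref{lemmaa3} to get $\sum_{j}\|w_j\|_{L^{L}}^{2}=O(\sum_{j}\|A_{\cdot j}\|_{2}^{2})=O(\textup{tr}(AA^{\top}))$. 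This buys a proof that needs nothing beyond Lemma \ref{lemmaa3} and makes transparent exactly what the cited random-matrix lemma is doing; the paper's route is shorter but outsources the key estimate. One small caveat: for odd $L\geq 3$ your Minkowski step requires the absolute moment $E|w_j|^{L}$, while the statement of Lemma \ref{lemmaa3} controls only the signed moment $E\{(a^{\top}U)^{L}\}$; this matches the paper's own level of rigor (every application in the paper has $L$ even, where the two coincide), but to cover odd $L$ you would need either to upgrade Lemma \ref{lemmaa3} to absolute moments or to interpolate via $E|w_j|^{L}\leq\{E(w_j^{L+1})\}^{L/(L+1)}$ at the cost of one extra moment. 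Your handling of $L=1$ by Jensen's inequality and $L=0$ trivially is fine.
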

\begin{proof}
Using Lemma \ref{lemmaa3} and independence of $U$ and $V$, 
\[
    {\color{red}|E\{(U^{\top}AV)^{L}\}|\le E|E\{(U^{\top}AV)^{L}\mid U\}|}\leq C\times E(\|A^{\top}U\|_{2}^{L}),
\]
where $C$ is a constant related to $L$ only. 

Using Lemma A.1 in \citet{bai1998no}, 
\[
    E(\|A^{\top}U\|_{2}^{L})=E\{(U^{\top}AA^{\top}U)^{L/2}\}=O(\{\textup{tr}(AA^{\top})\}^{L/2}).
\]
Note that the assumption that the entries of $U$ are identically distributed in Lemma A.1 in \citet{bai1998no} can be dropped by inspecting their proof. Also, $\text{tr}\{(AA^{\top})^{L/2}\}\leq \{\text{tr}(AA^{\top})\}^{L/2}$ as $AA^{\top}$ is positive semi-definite. 
\end{proof}

\begin{lemma}
\label{lemmaa5}
Suppose $U$ is a random vector with independent entries $\{u_i\}_{i=1}^{q}$ such that $E(u_i)=0$ and $E(u_i^2)=1$. Then if $\max_i E(u_{i}^{2L})<\infty$ and $A_{q\times q}$ is a deterministic but not necessarily symmetric matrix, we have
$$E[\{U^{\top}AU-\textup{tr}(A)\}^{L}]=O(\{\textup{tr}(AA^{\top})\}^{L/2}).$$
\end{lemma}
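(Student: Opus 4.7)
The approach is to decompose $U^{\top}AU-\text{tr}(A)$ into a diagonal sum and an off-diagonal quadratic chaos, bound each piece separately by $O(\{\text{tr}(AA^{\top})\}^{L/2})$, and combine via Minkowski's inequality. Since $U^{\top}AU=U^{\top}\tilde{A}U$ with $\tilde{A}=(A+A^{\top})/2$ and $\text{tr}(\tilde{A}\tilde{A}^{\top})\le\text{tr}(AA^{\top})$ (by the parallelogram identity for the Frobenius norm), I may assume without loss of generality that $A$ is symmetric. Writing
\[
S:=U^{\top}AU-\text{tr}(A)=\underbrace{\sum_{i=1}^{q}a_{ii}(u_{i}^{2}-1)}_{=:D}+\underbrace{2\sum_{i<j}a_{ij}u_{i}u_{j}}_{=:W},
\]
by Minkowski's inequality in $L^{L}(\Omega)$ it suffices to bound $E|D|^{L}$ and $E|W|^{L}$ separately by $O(\{\text{tr}(AA^{\top})\}^{L/2})$. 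The case $L=1$ is trivial since $E(S)=0$, so I may assume $L\ge 2$.

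For the diagonal contribution $D$, the summands $\xi_{i}:=a_{ii}(u_{i}^{2}-1)$ are independent and mean zero with $E\xi_{i}^{2}\le C a_{ii}^{2}$ and $E|\xi_{i}|^{L}\le C|a_{ii}|^{L}$ under the $2L$-th moment hypothesis. Rosenthal's inequality yields
\[
E|D|^{L}\le C_{L}\Bigl\{\Bigl(\sum_{i}a_{ii}^{2}\Bigr)^{L/2}+\sum_{i}|a_{ii}|^{L}\Bigr\}.
\]
Since $\sum_{i}a_{ii}^{2}\le\text{tr}(AA^{\top})$ and, by the monotonicity of $\ell^{p}$ norms, $\sum_{i}|a_{ii}|^{L}\le(\sum_{i}a_{ii}^{2})^{L/2}$ for $L\ge 2$, both pieces are $O(\{\text{tr}(AA^{\top})\}^{L/2})$.

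For the off-diagonal chaos $W$, let $B$ be $A$ with its diagonal zeroed out, so that $W=U^{\top}BU$, $B$ is symmetric, and $\text{tr}(BB^{\top})\le\text{tr}(AA^{\top})$. Because $B$ has vanishing diagonal, the de la Pe\~na--Gin\'e decoupling inequality for second-order Gaussian-like chaos gives
\[
E|U^{\top}BU|^{L}\le C_{L}\,E|U^{\top}BV|^{L},
\]
where $V$ is an independent copy of $U$. Lemma~\ref{lemmaa4}, which applies to an arbitrary (not necessarily symmetric) matrix, then bounds the right-hand side by $C_{L}\{\text{tr}(BB^{\top})\}^{L/2}\le C_{L}\{\text{tr}(AA^{\top})\}^{L/2}$ (invoked with a moment order up to $2L$, which is available under our hypothesis). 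Combining the bounds on $D$ and $W$ via Minkowski completes the argument.

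The main obstacle is the decoupling step for $W$; if one prefers to avoid citing de la Pe\~na--Gin\'e, an equivalent elementary route is available: order the indices, define the martingale differences $M_{k}=u_{k}\sum_{j<k}b_{jk}u_{j}$ so that $W/2=\sum_{k}M_{k}$ relative to the filtration $\mathcal{F}_{k}=\sigma(u_{1},\ldots,u_{k})$, apply Burkholder's inequality to reduce $E|W|^{L}$ to a moment of the quadratic variation $\sum_{k}M_{k}^{2}=\sum_{k}u_{k}^{2}R_{k}^{2}$ with $R_{k}=\sum_{j<k}b_{jk}u_{j}$, and then control $E|R_{k}|^{L}$ via Lemma~\ref{lemmaa3} before summing. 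Either route yields the stated bound.
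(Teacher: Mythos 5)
Your proof is correct, but it takes a genuinely different route from the paper: the paper does not prove this lemma at all, it simply cites Lemma 2.7 of Bai and Silverstein (1998) and asserts that the identical-distribution assumption in their statement can be dropped by inspecting their proof (which proceeds by a direct combinatorial expansion of the $L$th moment of the quadratic form). Your argument is instead self-contained and modular: symmetrize $A$ (the Frobenius-norm contraction $\|\tilde A\|_{\rm F}\le\|A\|_{\rm F}$ is fine, though the triangle inequality is the more direct justification than the parallelogram identity), split into the diagonal sum and the off-diagonal chaos, control the diagonal by Rosenthal plus $\ell^2\hookrightarrow\ell^L$, and control the off-diagonal part by decoupling followed by the paper's own Lemma \ref{lemmaa4} (or, equivalently, by the Burkholder martingale route you sketch, which combined with Lemma \ref{lemmaa3} avoids any external decoupling reference). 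What your approach buys is transparency about exactly which moment hypotheses are used where ($2L$th moments only enter through the diagonal term and through $E|u_i^2-1|^L$), and it makes the non-i.i.d. extension automatic rather than an assertion about someone else's proof; what the paper's citation buys is brevity. One small bookkeeping point: Lemma \ref{lemmaa4} as stated bounds the raw moment $E\{(U^\top BV)^L\}$ rather than $E|U^\top BV|^L$, so for odd $L$ you should either note that its proof actually controls the absolute moment (it passes through $E\|B^\top U\|_2^L$) or restrict to even $L$, which is the only case the paper ever invokes.
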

\begin{proof}
See Lemma 2.7 in \citet{bai1998no} and its proof. Again, the assumption that $\{u_i\}_{i=1}^{q}$ are identically distributed in Lemma 2.7 in \citet{bai1998no} can be dropped. 
\end{proof}

\section{Technical details}\label{secB}
\subsection{Taylor expansion for the kernel function}\label{secB.1}
Consider $k(X_{i_1},X_{i_2})=f(p^{-1}\|X_{i_1}-X_{i_2}\|_{2}^2)$ for illustration. The Taylor expansions of $f(p^{-1}\|Y_{j_1}-Y_{j_2}\|_{2}^2)$ and $f(p^{-1}\|X_{i}-Y_{j}\|_{2}^2)$ follow the same line. 

Let $g(x)=f(x^2)$ as defined in Assumption \ref{assumption5} and write $\widetilde{X}_{i_1,i_2}=p^{-1}\|X_{i_{1}}-X_{i_{2}}\|_{2}^{2}-\tau_1$. Under the assumption $\sup_{1\leq s\leq l+1}\sup_{x\geq 0}|g^{(s)}(x)|<\infty$, we can apply Lemma \ref{lemma-expansion} with $x=\widetilde{X}_{i_1,i_2}$, $a=\tau_1$ and $h(x)=g((a+x)^{1/2})$ to obtain
\begin{align}\label{eq-expan-h}
h(\widetilde{X}_{i_1,i_2})=\sum^{l}_{s=0}\frac{h^{(s)}(0)\widetilde{X}_{i_1,i_2}^s}{s!}+c_{l+1,\tau_1}(\widetilde{X}_{i_1,i_2})\widetilde{X}_{i_1,i_2}^{l+1},
\end{align}
where $c_{l+1,\tau_1}(\cdot)$ is a bounded function. 

Note that $f$ is related to $h$ through $f(x+a)=h(x)$ and hence $f^{(s)}(x+a)=h^{(s)}(x)$ for any $s\geq 0$. Thus (\ref{eq-expan-h}) implies that
\begin{align*}
f(p^{-1}\|X_{i_1}-X_{i_2}\|_{2}^2)=f(\widetilde{X}_{i_1,i_2}+\tau_1)=\sum^{l}_{s=0}\frac{f^{(s)}(\tau_1)\widetilde{X}_{i_1,i_2}^s}{s!}+c_{l+1,\tau_1}(\widetilde{X}_{i_1,i_2})\widetilde{X}_{i_1,i_2}^{l+1}.
\end{align*}

\subsection{The variance and the mean of the second-order term in the non-null setting}
\label{secB.2}
We first handle the variance of $\widetilde{\Delta}_{2}$. Define $P_{1}=\{n(n-1)\}^{-1}\sum_{i_{1}\neq i_{2}}c_{2,\tau_1}(\widetilde{X}_{i_1,i_2})(\|X_{i_{1}}-X_{i_{2}}\|_{2}^{2}-p\tau_{1})^{2}$, $P_{2}=\{m(m-1)\}^{-1}\sum_{j_{1}\neq j_{2}}c_{2,\tau_2}(\widetilde{Y}_{j_1,j_2})(\|Y_{j_{1}}-Y_{j_{2}}\|_{2}^{2}-p\tau_{2})^{2}$ and $P_{3}=-2(nm)^{-1}\sum_{i,j}c_{2,\tau_3}(\widetilde{Z}_{i,j})(\|X_{i}-Y_{j}\|_{2}^{2}-p\tau_{3})^{2}$. We note that
\begin{align*}
    \text{var}(P_{1})&=\frac{1}{n^{2}(n-1)^{2}}\sum_{i_{1}\neq i_{2}}\sum_{i_{3}\neq i_{4}}\text{cov}\{c_{2,\tau_1}(\widetilde{X}_{i_1,i_2})(\|X_{i_{1}}-X_{i_{2}}\|_{2}^{2}-p\tau_{1})^{2},c_{2,\tau_1}(\widetilde{X}_{i_3,i_4})(\|X_{i_{3}}-X_{i_{4}}\|_{2}^{2}-p\tau_{1})^{2}\}\\
    &=\frac{4}{n^{2}(n-1)^{2}}\sum_{i_{1}\neq i_{2}}\sum_{i_{3}\neq i_{1},i_{2}}\text{cov}\{c_{2,\tau_1}(\widetilde{X}_{i_1,i_2})(\|X_{i_{1}}-X_{i_{2}}\|_{2}^{2}-p\tau_{1})^{2},c_{2,\tau_1}(\widetilde{X}_{i_1,i_3})(\|X_{i_{1}}-X_{i_{3}}\|_{2}^{2}-p\tau_{1})^{2}\}\\
    &\quad+\frac{2}{n^{2}(n-1)^{2}}\sum_{i_{1}\neq i_{2}}\text{var}\{c_{2,\tau_1}(\widetilde{X}_{i_1,i_2})(\|X_{i_{1}}-X_{i_{2}}\|_{2}^{2}-p\tau_{1})^{2}\}\\
    &=\frac{4(n-2)}{n(n-1)}\text{cov}\{c_{2,\tau_1}(\widetilde{X}_{1,2})(\|X_{1}-X_{2}\|_{2}^{2}-p\tau_{1})^{2},c_{2,\tau_1}(\widetilde{X}_{1,3})(\|X_{1}-X_{3}\|_{2}^{2}-p\tau_{1})^{2}\}\\
    &\quad+\frac{2}{n(n-1)}\text{var}\{c_{2,\tau_1}(\widetilde{X}_{1,2})(\|X_{1}-X_{2}\|_{2}^{2}-p\tau_{1})^{2}\}, 
\end{align*}
where $X_{1},X_{2},X_{3}$ are i.i.d copies of $X$. Furthermore, we have
\begin{align*}
    \text{var}(P_{1})&\le\frac{4(n-2)}{n(n-1)}\surd{\text{var}\{c_{2,\tau_1}(\widetilde{X}_{1,2})(\|X_{1}-X_{2}\|_{2}^{2}-p\tau_{1})^{2}\}}\times\surd{\text{var}\{c_{2,\tau_1}(\widetilde{X}_{1,3})(\|X_{1}-X_{3}\|_{2}^{2}-p\tau_{1})^{2}\}}\\
    &\quad+\frac{2}{n(n-1)}\text{var}\{c_{2,\tau_1}(\widetilde{X}_{1,2})(\|X_{1}-X_{2}\|_{2}^{2}-p\tau_{1})^{2}\}\\
    &=\frac{2(2n-3)}{n(n-1)}\text{var}\{c_{2,\tau_1}(\widetilde{X}_{1,2})(\|X_{1}-X_{2}\|_{2}^{2}-p\tau_{1})^{2}\}\\
    &\le\frac{2(2n-3)}{n(n-1)}E\{c_{2,\tau_1}^{2}(\widetilde{X}_{1,2})(\|X_{1}-X_{2}\|_{2}^{2}-p\tau_{1})^{4}\}\\
    &=O(N^{-1})\times E\{(\|X_{1}-X_{2}\|_{2}^{2}-p\tau_{1})^{4}\},
\end{align*}
since $c_{2,\tau_1}(\cdot)$ is bounded. Hence we need to study the term $E\{(\|X_{1}-X_{2}\|_{2}^{2}-p\tau_{1})^{4}\}$. We have
\begin{align*}
    E\{(\|X_{1}-X_{2}\|_{2}^{2}-p\tau_{1})^{4}\}&=E[\{\|X_{1}-\mu_{1}\|_{2}^{2}+\|X_{2}-\mu_{1}\|_{2}^{2}-2(X_{1}-\mu_{1})^{\top}(X_{2}-\mu_{1})-2\text{tr}(\Sigma_{1})\}^{4}]\\
    &=E[\{U_{1}^{\top}\Gamma_{1}^{\top}\Gamma_{1}U_{1}-\text{tr}(\Sigma_{1})+U_{2}^{\top}\Gamma_{1}^{\top}\Gamma_{1}U_{2}-\text{tr}(\Sigma_{1})-2U_{1}^{\top}\Gamma_{1}^{\top}\Gamma_{1}U_{2}\}^{4}]\\
    &\leq C\times[E\{(U_{1}^{\top}\Gamma_{1}^{\top}\Gamma_{1}U_{1}-\text{tr}(\Sigma_{1}))^{4}\}+E\{(U_{1}^{\top}\Gamma_{1}^{\top}\Gamma_{1}U_{2})^{4}\}],
\end{align*}
for some constant $C>0$. Using Lemma \ref{lemmaa5}, we know that
\[
E\{(U_{1}^{\top}\Gamma_{1}^{\top}\Gamma_{1}U_{1}-\text{tr}(\Sigma_{1}))^{4}\}=O(\{\text{tr}(\Gamma_{1}^{\top}\Gamma_{1}\Gamma_{1}^{\top}\Gamma_{1})\}^{2})=O(\text{tr}^{2}(\Sigma_{1}^{2}))=O(p^{2}).
\]
Using Lemma \ref{lemmaa4}, 
\[
E\{(U_{1}^{\top}\Gamma_{1}^{\top}\Gamma_{1}U_{2})^{4}\}=O(\text{tr}^{2}(\Sigma_{1}^{2}))=O(p^{2}).
\]
Therefore, 
$$\text{var}(P_{1})=O(N^{-1})\times E\{(\|X_{1}-X_{2}\|_{2}^{2}-p\tau_{1})^{4}\}=O(N^{-1} p^{2}).$$
Similarly, 
\begin{align*}
    \text{var}(P_{2})&=\frac{4(m-2)}{m(m-1)}\text{cov}\{c_{2,\tau_2}(\widetilde{Y}_{1,2})(\|Y_{1}-Y_{2}\|_{2}^{2}-p\tau_{2})^{2},c_{2,\tau_2}(\widetilde{Y}_{1,3})(\|Y_{1}-Y_{3}\|_{2}^{2}-p\tau_{2})^{2}\}\\
    &\quad+\frac{2}{m(m-1)}\text{var}\{c_{2,\tau_2}(\widetilde{Y}_{1,2})(\|Y_{1}-Y_{2}\|_{2}^{2}-p\tau_{2})^{2}\}\\
    &=O(N^{-1})\times E\{(\|Y_{1}-Y_{2}\|_{2}^{2}-p\tau_{2})^{4}\}=O(N^{-1} p^{2}),
\end{align*}
and
\begin{align*}
    \text{var}(P_{3})&=\frac{4}{n^{2}m^{2}}\sum_{i_{1}}\sum_{i_{2}}\sum_{j_{1}}\sum_{j_{2}}\text{cov}\{c_{2,\tau_3}(\widetilde{Z}_{i_1,j_1})(\|X_{i_1}-Y_{j_1}\|_{2}^{2}-p\tau_{3})^{2},c_{2,\tau_3}(\widetilde{Z}_{i_2,j_2})(\|X_{i_2}-Y_{j_2}\|_{2}^{2}-p\tau_{3})^{2}\}\\
    &=\frac{4}{n^{2}m^{2}}\sum_{i}\sum_{j_{1}\neq j_{2}}\text{cov}\{c_{2,\tau_3}(\widetilde{Z}_{i,j_1})(\|X_{i}-Y_{j_1}\|_{2}^{2}-p\tau_{3})^{2},c_{2,\tau_3}(\widetilde{Z}_{i,j_2})(\|X_{i}-Y_{j_2}\|_{2}^{2}-p\tau_{3})^{2}\}\\
    &\quad+\frac{4}{n^{2}m^{2}}\sum_{i_{1}\neq i_{2}}\sum_{j}\text{cov}\{c_{2,\tau_3}(\widetilde{Z}_{i_1,j})(\|X_{i_1}-Y_{j}\|_{2}^{2}-p\tau_{3})^{2},c_{2,\tau_3}(\widetilde{Z}_{i_2,j})(\|X_{i_2}-Y_{j}\|_{2}^{2}-p\tau_{3})^{2}\}\\
    &\quad+\frac{4}{n^{2}m^{2}}\sum_{i}\sum_{j}\text{var}\{c_{2,\tau_3}(\widetilde{Z}_{i,j})(\|X_{i}-Y_{j}\|_{2}^{2}-p\tau_{3})^{2}\}\\
    &=\frac{4(m-1)}{nm}\text{cov}\{c_{2,\tau_3}(\widetilde{Z}_{1,1})(\|X_{1}-Y_{2}\|_{2}^{2}-p\tau_{3})^{2},c_{2,\tau_3}(\widetilde{Z}_{1,2})(\|X_{1}-Y_{2}\|_{2}^{2}-p\tau_{3})^{2}\}\\
    &\quad+\frac{4(n-1)}{nm}\text{cov}\{c_{2,\tau_3}(\widetilde{Z}_{1,1})(\|X_{1}-Y_{1}\|_{2}^{2}-p\tau_{3})^{2},c_{2,\tau_3}(\widetilde{Z}_{2,1})(\|X_{2}-Y_{1}\|_{2}^{2}-p\tau_{3})^{2}\}\\
    &\quad+\frac{4}{nm}\text{var}\{c_{2,\tau_3}(\widetilde{Z}_{1,1})(\|X_{1}-Y_{1}\|_{2}^{2}-p\tau_{3})^{2}\}\\
    &=O(N^{-1})\times E\{(\|X_{1}-Y_{1}\|_{2}^{2}-p\tau_{3})^{4}\}\\
    &=O(N^{-1})\times(E\{(\|X_{1}-\mu_{1}\|_{2}^{2}-\text{tr}(\Sigma_{1}))^{4}\}+E\{(\|Y_{1}-\mu_{2}\|_{2}^{2}-\text{tr}(\Sigma_{2}))^{4}\}\\
    &\quad+E\{((X_{1}-\mu_{1})^{\top}(Y_{1}-\mu_{2}))^{4}\}+E\{((\mu_{1}-\mu_{2})^{\top}(X_{1}-\mu_{1}))^{4}\}+E\{((\mu_{2}-\mu_{1})^{\top}(Y_{1}-\mu_{2}))^{4}\}].
\end{align*}
As shown in the calculations of $\text{var}(P_1)$ and $\text{var}(P_2)$, we have $E\{(\|X_{1}-\mu_{1}\|_{2}^{2}-\text{tr}(\Sigma_{1}))^{4}\}=O(p^2)$ and $E\{(\|Y_{1}-\mu_{2}\|_{2}^{2}-\text{tr}(\Sigma_{2}))^{4}\}=O(p^2)$. Thus, 
\begin{align*}
    \text{var}(P_{3})&=O(N^{-1})\times[O(p^{2})+E\{(U_{1}^{\top}\Gamma_{1}^{\top}\Gamma_{2}V_{1})^{4}\}+E\{((\mu_{1}-\mu_{2})^{\top}\Gamma_{1}U_{1})^{4}\}+E\{((\mu_{2}-\mu_{1})^{\top}\Gamma_{2}V_{1})^{4}\}]\\
    &=O(N^{-1})\times[O(p^{2})+O(\text{tr}^{2}(\Sigma_{1}\Sigma_{2}))+O(\{(\mu_{1}-\mu_{2})^{\top}\Sigma_{1}(\mu_{1}-\mu_{2})\}^{2})\\
    &\quad+O(\{(\mu_{2}-\mu_{1})^{\top}\Sigma_{2}(\mu_{2}-\mu_{1})\}^{2})]\\
    &=O(N^{-1} p^{2}),
\end{align*}
where we have used Lemma \ref{lemmaa3}, Lemma \ref{lemmaa4} and Assumption \ref{assumption3}. Putting together the above results, we have
$$\text{var}(\widetilde{\Delta}_{2})=p^{-4}\times\text{var}(P_{1}+P_{2}+P_{3})\leq 3p^{-4}\times\{\text{var}(P_{1})+\text{var}(P_{2})+\text{var}(P_{3})\}=O(N^{-1} p^{-2}).$$
under Assumptions \ref{assumption1}--\ref{assumption5}. 

Now we cope with the mean of $\widetilde{\Delta}_{2}$. Under Assumption \ref{assumption5}, we can also perform a third-order Taylor expansion (again see Section \ref{secB.1}):
$$f(p^{-1}\|X_{i_1}-X_{i_2}\|_{2}^2)=f(\tau_1)+f^{(1)}(\tau_1)\widetilde{X}_{i_1,i_2}+\frac{f^{(2)}(\tau_1)}{2}\widetilde{X}_{i_1,i_2}^{2}+c_{3,\tau_1}(\widetilde{X}_{i_1,i_2})\widetilde{X}_{i_1,i_2}^{3},$$
where $c_{3,\tau_1}(\cdot)$ is some bounded function. Comparing with the second-order expansion, we immediately have
$$c_{2,\tau_1}(\widetilde{X}_{i_1,i_2})=\frac{1}{2}f^{(2)}(\tau_1)+c_{3,\tau_1}(\widetilde{X}_{i_1,i_2})\widetilde{X}_{i_1,i_2}.$$
Similarly,
\begin{align*}
    c_{2,\tau_2}(\widetilde{Y}_{j_1,j_2})&=\frac{1}{2}f^{(2)}(\tau_2)+c_{3,\tau_2}(\widetilde{Y}_{j_1,j_2})\widetilde{Y}_{j_1,j_2},\\
    c_{2,\tau_3}(\widetilde{Z}_{i,j})&=\frac{1}{2}f^{(2)}(\tau_3)+c_{3,\tau_3}(\widetilde{Z}_{i,j})\widetilde{Z}_{i,j}.
\end{align*}
Using the definition of $T_{1}$, we obtain
\begin{align*}
    &\quad|E(\widetilde{\Delta}_{2})-T_{1}|\\
    &\le \frac{1}{p^2}\times\big[E\{|c_{2,\tau_1}(\widetilde{X}_{1,2})-\frac{1}{2}f^{(2)}(\tau_1)|\times(\|X_{1}-X_{2}\|_{2}^{2}-p\tau_{1})^{2}\}\\
    &\quad+E\{|c_{2,\tau_2}(\widetilde{Y}_{1,2})-\frac{1}{2}f^{(2)}(\tau_2)|\times(\|Y_{1}-Y_{2}\|_{2}^{2}-p\tau_{2})^{2}\}\\
    &\quad+E\{|2c_{2,\tau_3}(\widetilde{Z}_{1,1})-f^{(2)}(\tau_3)|\times(\|X_{1}-Y_{1}\|_{2}^{2}-p\tau_{3})^{2}\}\big]\\
    &=\frac{1}{p^2}\times\big[E\{|c_{3,\tau_1}(\widetilde{X}_{1,2})\widetilde{X}_{1,2}|\times(\|X_{1}-X_{2}\|_{2}^{2}-p\tau_{1})^{2}\}\\
    &\quad+E\{|c_{3,\tau_2}(\widetilde{Y}_{1,2})\widetilde{Y}_{1,2}|\times(\|Y_{1}-Y_{2}\|_{2}^{2}-p\tau_{2})^{2}\}+2E\{|c_{3,\tau_3}(\widetilde{Z}_{1,1})\widetilde{Z}_{1,1}|\times(\|X_{1}-Y_{1}\|_{2}^{2}-p\tau_{3})^{2}\}\big]\\
    &\leq \frac{C}{p^3}\times\big\{E(|\|X_{1}-X_{2}\|_{2}^{2}-p\tau_{1}|^{3})+E(|\|Y_{1}-Y_{2}\|_{2}^{2}-p\tau_{2}|^{3})+2E(|\|X_{1}-Y_{1}\|_{2}^{2}-p\tau_{3}|^{3})\big\},
\end{align*}
for some constant $C>0$. Following the previous calculations of the variance, we have
\begin{align*}
    E(|\|X_{1}-X_{2}\|_{2}^{2}-p\tau_{1}|^{3})&\le\surd{E\{(\|X_{1}-X_{2}\|_{2}^{2}-p\tau_{1})^{2}\}}\times \surd{E\{(\|X_{1}-X_{2}\|_{2}^{2}-p\tau_{1})^{4}\}}\\
    &=\surd{O(p)}\times \surd{O(p^{2})}=O(p^{\frac{3}{2}}).
\end{align*}
Similarly, $E(|\|Y_{1}-Y_{2}\|_{2}^{2}-p\tau_{2}|^{3})=O(p^{3/2})$ and $E(|\|X_{1}-Y_{1}\|_{2}^{2}-p\tau_{3}|^{3})=O(p^{3/2})$. Therefore, 
$$E(\widetilde{\Delta}_{2})-T_{1}=O(p^{-\frac{3}{2}}).$$

\subsection{The variance of the second-order term under the null}\label{secB.3}
Under the null $H_0:P_X=P_Y$, let $\tau_{1}=\tau_{2}=\tau_{3}=\tau$ and observe that
\begin{align*}
    &\quad\text{cov}\{c_{2,\tau}(\widetilde{X}_{1,2})(\|X_{1}-X_{2}\|_{2}^{2}-p\tau)^{2},c_{2,\tau}(\widetilde{X}_{1,3})(\|X_{1}-X_{3}\|_{2}^{2}-p\tau)^{2}\}
    \\&=\text{cov}\{c_{2,\tau}(\widetilde{Y}_{1,2})(\|Y_{1}-Y_{2}\|_{2}^{2}-p\tau)^{2},c_{2,\tau}(\widetilde{Y}_{1,3})(\|Y_{1}-Y_{3}\|_{2}^{2}-p\tau)^{2}\}\\
    &=\text{cov}\{c_{2,\tau}(\widetilde{Z}_{1,1})(\|X_{1}-Y_{1}\|_{2}^{2}-p\tau)^{2},c_{2,\tau}(\widetilde{Z}_{1,2})(\|X_{1}-Y_{2}\|_{2}^{2}-p\tau)^{2}\}
    \\&=\text{cov}\{c_{2,\tau}(\widetilde{Z}_{1,1})(\|X_{1}-Y_{1}\|_{2}^{2}-p\tau)^{2},c_{2,\tau}(\widetilde{Z}_{2,1})(\|X_{2}-Y_{1}\|_{2}^{2}-p\tau)^{2}\}\\
    &=\text{cov}\{c_{2,\tau}(\widetilde{X}_{1,2})(\|X_{1}-X_{2}\|_{2}^{2}-p\tau)^{2},c_{2,\tau}(\widetilde{Z}_{1,1})(\|X_{1}-Y_{1}\|_{2}^{2}-p\tau)^{2}\}
    \\&=\text{cov}\{c_{2,\tau}(\widetilde{Y}_{1,2})(\|Y_{1}-Y_{2}\|_{2}^{2}-p\tau)^{2},c_{2,\tau}(\widetilde{Z}_{1,1})(\|X_{1}-Y_{1}\|_{2}^{2}-p\tau)^{2}\}=Q_{1},
\end{align*}
and
\begin{align*}
    &\quad\text{var}\{c_{2,\tau}(\widetilde{X}_{1,2})(\|X_{1}-X_{2}\|_{2}^{2}-p\tau)^{2}\}\\
    &=\text{var}\{c_{2,\tau}(\widetilde{Y}_{1,2})(\|Y_{1}-Y_{2}\|_{2}^{2}-p\tau)^{2}\}\\
    &=\text{var}\{c_{2,\tau}(\widetilde{Z}_{1,1})(\|X_{1}-Y_{1}\|_{2}^{2}-p\tau)^{2}\}=Q_{2}.
\end{align*}
Then along with the calculations in Section \ref{secB.2}, some algebra yields that
\begin{align*}
    \text{var}(P_{1})&=\frac{4(n-2)}{n(n-1)}Q_{1}+\frac{2}{n(n-1)}Q_{2},\\
    \text{var}(P_{2})&=\frac{4(m-2)}{m(m-1)}Q_{1}+\frac{2}{m(m-1)}Q_{2},\\
    \text{var}(P_{3})&=\frac{4(n+m-2)}{nm}Q_{1}+\frac{4}{nm}Q_{2}.
\end{align*}
Because the two samples are independent, $\text{cov}(P_{1},P_{2})=0$. Also, under $H_{0}$, we have
\begin{align*}
    &\text{cov}(P_{1},P_{3})\\
    =&-\frac{2}{n^{2}(n-1)m}\sum_{i_{1}\neq i_{2}}\sum_{i_{3}}\sum_{j}\text{cov}\{c_{2,\tau}(\widetilde{X}_{i_1,i_2})(\|X_{i_{1}}-X_{i_{2}}\|_{2}^{2}-p\tau)^{2},c_{2,\tau}(\widetilde{Z}_{i_3,j})(\|X_{i_3}-Y_{j}\|_{2}^{2}-p\tau)^{2}\}\\
    =&-\frac{4}{n^{2}(n-1)m}\sum_{i_{1}\neq i_{2}}\sum_{j}\text{cov}\{c_{2,\tau}(\widetilde{X}_{i_1,i_2})(\|X_{i_{1}}-X_{i_{2}}\|_{2}^{2}-p\tau)^{2},c_{2,\tau}(\widetilde{Z}_{i_1,j})(\|X_{i_1}-Y_{j}\|_{2}^{2}-p\tau)^{2}\}\\
    =&-\frac{4}{n}Q_{1}.\\
\end{align*}
Similarly, 
$$\text{cov}(P_{2},P_{3})=-\frac{4}{m}Q_{1}.$$
In summary, under $H_{0}$,
\begin{align*}
    \text{var}(\widetilde{\Delta}_{2})&=p^{-4}\times\text{var}(P_{1}+P_{2}+P_{3})=\frac{2}{p^{4}}\times\bigg\{\frac{1}{n(n-1)}+\frac{1}{m(m-1)}+\frac{2}{nm}\bigg\}(Q_{2}-2Q_{1})\\
    &\le\frac{6}{p^{4}}\times\left\{\frac{1}{n(n-1)}+\frac{1}{m(m-1)}+\frac{2}{nm}\right\}Q_{2},
\end{align*}
where we have used the Cauchy–Schwarz inequality, i.e., $Q_{1}\ge-Q_{2}$. 

As shown in Section \ref{secB.2}, $Q_{2}=O(p^{2})$ under Assumptions \ref{assumption1}--\ref{assumption5}, thus, $\text{var}(\widetilde{\Delta}_{2})=O(N^{-2} p^{-2})$, the order of which is smaller than the one in the non-null setting.

\subsection{The variances of $\Delta_{s}$ for $s=2,\ldots,l-1,$ and $\widetilde{\Delta}_{l}$ in Theorem \ref{theorem3.9}}\label{secB.4}
We first handle the variance of $\Delta_{s}\ (s=2,\ldots,l-1)$ under Assumption \ref{assumption8}. We do not require the first $(l-1)$ moments of $X$ and $Y$ to be equal for now. We aim to show that $\text{var}(\Delta_{s})=o(N^{-2} p^{-1})$. Recall that 
\begin{align*}
    \Delta_{s}-E(\Delta_{s})&=\frac{1}{s!}\bigg[\frac{f^{(s)}(\tau_1)}{n(n-1)}\sum_{i_{1}\neq i_{2}}\{\widetilde{X}_{i_1,i_2}^s-E(\widetilde{X}_{i_1,i_2}^s)\}+\frac{f^{(s)}(\tau_2)}{m(m-1)}\sum_{j_{1}\neq j_{2}}\{\widetilde{Y}_{j_1,j_2}^s-E(\widetilde{Y}_{j_1,j_2}^s)\}
    \\&\quad-\frac{2}{nm}f^{(s)}(\tau_3)\sum_{i,j}\{\widetilde{Z}_{i,j}^s-E(\widetilde{Z}_{i,j}^s)\}\bigg].
\end{align*}
We can express the term inside the square bracket as
\begin{align*}
&\quad\frac{f^{(s)}(\tau_1)}{n(n-1)}\sum_{i_{1}\neq i_{2}}\{\widetilde{X}_{i_1,i_2}^s-E(\widetilde{X}_{i_1,i_2}^s)\}+\frac{f^{(s)}(\tau_2)}{m(m-1)}\sum_{j_{1}\neq j_{2}}\{\widetilde{Y}_{j_1,j_2}^s-E(\widetilde{Y}_{j_1,j_2}^s)\}-\frac{2}{nm}f^{(s)}(\tau_3)\sum_{i,j}\{\widetilde{Z}_{i,j}^s-E(\widetilde{Z}_{i,j}^s)\}
\\&=\frac{f^{(s)}(\tau_1)}{n(n-1)}\sum_{i_{1}\neq i_{2}}\{\widetilde{X}_{i_1,i_2}^s-E(\widetilde{X}_{i_1,i_2}^s\mid X_{i_1})-E(\widetilde{X}_{i_1,i_2}^s\mid X_{i_2})+E(\widetilde{X}_{i_1,i_2}^s)\}
\\&\quad+\frac{f^{(s)}(\tau_2)}{m(m-1)}\sum_{j_{1}\neq j_{2}}\{\widetilde{Y}_{j_1,j_2}^s-E(\widetilde{Y}_{j_1,j_2}^s\mid Y_{j_1})-E(\widetilde{Y}_{j_1,j_2}^s\mid Y_{j_2})+E(\widetilde{Y}_{j_1,j_2}^s)\}
\\&\quad-\frac{2}{nm}f^{(s)}(\tau_3)\sum_{i,j}\{\widetilde{Z}_{i,j}^s-E(\widetilde{Z}_{i,j}^s\mid X_i)-E(\widetilde{Z}_{i,j}^s\mid Y_j)+E(\widetilde{Z}_{i,j}^s)\}
\\&\quad+\frac{2}{n}\sum_{i}\{f^{(s)}(\tau_1)E(\widetilde{X}_{i,0}^s\mid X_i)-f^{(s)}(\tau_1)E(\widetilde{X}_{i,0}^s)-f^{(s)}(\tau_3)E(\widetilde{Z}_{i,0}^s\mid X_i)+f^{(s)}(\tau_3)E(\widetilde{Z}_{i,0}^s)\}
\\&\quad+\frac{2}{m}\sum_{j}\{f^{(s)}(\tau_2)E(\widetilde{Y}_{j,0}^s\mid Y_j)-f^{(s)}(\tau_2)E(\widetilde{Y}_{j,0}^s)-f^{(s)}(\tau_3)E(\widetilde{Z}_{0,j}^s\mid Y_j)+f^{(s)}(\tau_3)E(\widetilde{Z}_{0,j}^s)\}\\
&:=I_1+I_2+I_3
\end{align*}
where the sum of the first three terms is denoted by $I_1$, and $(X_0,Y_{0})$ is an i.i.d. copy of $(X,Y)$. 

Using the double-centering property, we have 
$E(I_1)=0$. Also, following Section \ref{secB.2} and using Lemmas \ref{lemmaa3}-\ref{lemmaa5}, 
\begin{align*}
    \text{var}(I_1)&=O(N^{-2})\times\{\text{var}(\widetilde{X}_{1,2}^s)+\text{var}(\widetilde{Y}_{1,2}^s)+\text{var}(\widetilde{Z}_{1,1}^s)\}\\
    &=O(N^{-2} p^{-2s})\times[E\{(\|X_{1}-X_{2}\|_{2}^{2}-p\tau_1)^{2s}\}+E\{(\|Y_{1}-Y_{2}\|_{2}^{2}-p\tau_2)^{2s}\}\\
    &\quad+E\{(\|X_{1}-Y_{1}\|_{2}^{2}-p\tau_3)^{2s}\}]\\
    &=O(N^{-2} p^{-2s})\times[O(\text{tr}^{s}(\Sigma_{1}^2))+O(\text{tr}^{s}(\Sigma_{2}^2))+O(\text{tr}^{s}(\Sigma_{1}\Sigma_{2}))\\
    &\quad+O(\{(\mu_{1}-\mu_{2})^{\top}\Sigma_{1}(\mu_{1}-\mu_{2})\}^{s})+O(\{(\mu_{2}-\mu_{1})^{\top}\Sigma_{2}(\mu_{2}-\mu_{1})\}^{s})]\\
    &=O(N^{-2} p^{-s})=o(N^{-2} p^{-1}).
\end{align*}
Thus, it remains to analyze $I_2$ (the analysis of $I_3$ is the same). Note that
\begin{align*}
\text{var}(I_2)=\frac{4}{n}\text{var}\{f^{(s)}(\tau_1)E(\widetilde{X}_{1,0}^s\mid X_1)-f^{(s)}(\tau_3)E(\widetilde{Z}_{1,0}^s\mid X_1)\}. 
\end{align*}
Under Assumption \ref{assumption8}, we have $\text{var}(I_2)=o(N^{-2} p^{-1})$. Similar analysis shows that 
$\text{var}(I_3)=o(N^{-2} p^{-1})$, which implies that $\text{var}(\Delta_{s})=o(N^{-2} p^{-1})$.

Next, we study $\text{var}(\widetilde{\Delta}_{l})$. Following similar calculations of $\text{var}(\widetilde{\Delta}_{2})$ in Section \ref{secB.2} and using Lemmas \ref{lemmaa3}-\ref{lemmaa5}, we have
\begin{align*}
    \text{var}(\widetilde{\Delta}_{l})&=O(N^{-1} p^{-2l})\times[E\{(\|X_{1}-X_{2}\|_{2}^{2}-p\tau)^{2l}\}+E\{(\|Y_{1}-Y_{2}\|_{2}^{2}-p\tau)^{2l}\}\\
    &\quad+E\{(\|X_{1}-Y_{1}\|_{2}^{2}-p\tau)^{2l}\}]\\
    &=O(N^{-1} p^{-2l})\times[O(\text{tr}^{l}(\Sigma_{1}^2))+O(\text{tr}^{l}(\Sigma_{2}^2))+O(\text{tr}^{l}(\Sigma_{1}\Sigma_{2}))\\
    &\quad+O(\{(\mu_{1}-\mu_{2})^{\top}\Sigma_{1}(\mu_{1}-\mu_{2})\}^{l})+O(\{(\mu_{2}-\mu_{1})^{\top}\Sigma_{2}(\mu_{2}-\mu_{1})\}^{l})]\\
    &=O(N^{-1} p^{-l}).
\end{align*}

\subsection{The means of $\Delta_{s}$ for $s=2,\ldots,l-1$ when $X$ and $Y$ share the first $(l-1)$ moments}\label{secB.5}
We will show that $E(\Delta_s)=T_{s-1}=0$ for $s=2,\ldots,l-1$ when the first $(l-1)$ moments of $X$ and $Y$ match. Observe that $T_{s-1} $ defined in (\ref{eq-Ts}) involves the first $2s$ moments of $X$ and $Y$. Thus it is a non-trivial task to show that $T_{s-1}=0$ when we only assume the first $(l-1)$ moments of $X$ and $Y$ match. Using the binomial expansion formula, we have
\begin{align*}
&\quad E\{(\|X_{1}-X_{2}\|_{2}^{2}-p\tau)^{s}\}+E\{(\|Y_{1}-Y_{2}\|_{2}^{2}-p\tau)^{s}\}-2E\{(\|X_{1}-Y_{1}\|_{2}^{2}-p\tau)^{s}\}
\\&=\sum^{s}_{a=0}\binom{s}{a}(-p\tau)^{s-a}\left(E\|X_1- X_2\|^{2a}_2+E\|Y_1-Y_2\|^{2a}_2-2E\|X_1- Y_1\|^{2a}_2\right).
\end{align*}
Next we show that $E\|X_1- X_2\|^{2a}_2+E\|Y_1-Y_2\|^{2a}_2-2E\|X_1- Y_1\|^{2a}_2=0$, for $a\le s$, when $X$ and $Y$ share the first $(l-1)$ moments. By the multinomial theorem,
\begin{align*}
    E\|X_{1}-X_{2}\|_{2}^{2a}&=\sum_{a_1+a_2\le a}\frac{(-2)^{a-a_1-a_2}a!}{a_1!a_2!(a-a_1-a_2)!}E\{\|X_1\|_{2}^{2a_1}\|X_2\|_{2}^{2a_2}(X_1^{\top}X_2)^{a-a_1-a_2}\}\\
    &=\sum_{2a_1\le a}\frac{(-2)^{a-2a_1}a!}{a_1!a_1!(a-2a_1)!}E\{\|X_1\|_{2}^{2a_1}\|X_2\|_{2}^{2a_1}(X_1^{\top}X_2)^{a-2a_1}\}\\
    &\quad+2\sum_{\substack{a_1>a_2 \\ a_1+a_2\le a}}\frac{(-2)^{a-a_1-a_2}a!}{a_1!a_2!(a-a_1-a_2)!}E\{\|X_1\|_{2}^{2a_1}\|X_2\|_{2}^{2a_2}(X_1^{\top}X_2)^{a-a_1-a_2}\}\\
    &:=S_{1}+2S_{2}.
\end{align*}
Observe that $S_{1}$ merely depends on the first $a$ moments of $X$. Thus
\begin{align*}
    S_{1}&=\sum_{2a_1\le a}\frac{(-2)^{a-2a_1}a!}{a_1!a_1!(a-2a_1)!}E\{\|X_1\|_{2}^{2a_1}\|Y_1\|_{2}^{2a_1}(X_1^{\top}Y_1)^{a-2a_1}\}\\
    &=\sum_{2a_1\le a}\frac{(-2)^{a-2a_1}a!}{a_1!a_1!(a-2a_1)!}E\{\|Y_1\|_{2}^{2a_1}\|Y_2\|_{2}^{2a_1}(Y_1^{\top}Y_2)^{a-2a_1}\},
\end{align*}
when the first $(l-1)$ moments of $X$ and $Y$ match. 

Moreover, by conditioning on $X_{1}$, $E\{\|X_1\|_{2}^{2a_1}\|X_2\|_{2}^{2a_2}(X_1^{\top}X_2)^{a-a_1-a_2}\mid X_1\}$ entirely depends on the first $(a-1)$ moments of $X$ (since $2a_2+a-a_1-a_2=a+a_2-a_1<a$). Thus, 
$$E\{\|X_1\|_{2}^{2a_1}\|X_2\|_{2}^{2a_2}(X_1^{\top}X_2)^{a-a_1-a_2}\mid X_1\}=E\{\|X_1\|_{2}^{2a_1}\|Y_1\|_{2}^{2a_2}(X_1^{\top}Y_1)^{a-a_1-a_2}\mid X_1\},$$
which implies
$$S_2=\sum_{\substack{a_1>a_2 \\ a_1+a_2\le a}}\frac{(-2)^{a-a_1-a_2}a!}{a_1!a_2!(a-a_1-a_2)!}E\{\|X_1\|_{2}^{2a_1}\|Y_1\|_{2}^{2a_2}(X_1^{\top}Y_1)^{a-a_1-a_2}\}.$$
Similarly, by conditioning on $Y_1$, we can show that
\begin{align*}
    &\quad\sum_{\substack{a_1>a_2 \\ a_1+a_2\le a}}\frac{(-2)^{a-a_1-a_2}a!}{a_1!a_2!(a-a_1-a_2)!}E\{\|Y_1\|_{2}^{2a_1}\|Y_2\|_{2}^{2a_2}(Y_1^{\top}Y_2)^{a-a_1-a_2}\}\\
    &=\sum_{\substack{a_1>a_2 \\ a_1+a_2\le a}}\frac{(-2)^{a-a_1-a_2}a!}{a_1!a_2!(a-a_1-a_2)!}E\{\|Y_1\|_{2}^{2a_1}\|X_1\|_{2}^{2a_2}(Y_1^{\top}X_1)^{a-a_1-a_2}\}.
\end{align*}
Combining these results, it is straightforward to see that
$$E\|X_1- X_2\|^{2a}_2+E\|Y_1-Y_2\|^{2a}_2-2E\|X_1- Y_1\|^{2a}_2=0,\ a\le s\implies E(\Delta_{s})=T_{s-1}=0.$$

\section{Proofs of lemmas, theorems and propositions}\label{secC}
\begin{proof}[Proof of Lemma \ref{lemma3.1}]
Following similar calculations in Section 6.2 of \cite{chen2010two}, it is straightforward to show that
$$E(\Delta_{1,1})=E(\Delta_{1,2})=E(\Delta_{1,3})=0,$$
and
\begin{align*}
    \text{var}(\Delta_{1,1})&=\frac{8}{p^{2}}\bigg[\frac{1}{n(n-1)}\{f^{(1)}(\tau_{1})\}^{2}\text{tr}(\Sigma_{1}^{2})+\frac{1}{m(m-1)}\{f^{(1)}(\tau_{2})\}^{2}\text{tr}(\Sigma_{2}^{2})+\frac{2}{nm}\{f^{(1)}(\tau_{3})\}^{2}\text{tr}(\Sigma_{1}\Sigma_{2})\bigg],\\
    \text{var}(\Delta_{1,2})&=\frac{16}{p^{2}}\{f^{(1)}(\tau_{3})\}^{2}\bigg\{\frac{1}{n}(\mu_{1}-\mu_{2})^\top \Sigma_{1}(\mu_{1}-\mu_{2})+\frac{1}{m}(\mu_{2}-\mu_{1})^\top \Sigma_{2}(\mu_{2}-\mu_{1})\bigg\},\\
    \text{var}(\Delta_{1,3})&=\frac{4}{p^{2}}\bigg[\{f^{(1)}(\tau_{1})-f^{(1)}(\tau_{3})\}^{2}\times\frac{1}{n}\text{var}(U_{1}^\top\Gamma_{1}^{\top}\Gamma_{1}U_{1})+\{f^{(1)}(\tau_{2})-f^{(1)}(\tau_{3})\}^{2}\times\frac{1}{m}\text{var}(V_{1}^\top\Gamma_{2}^{\top}\Gamma_{2}V_{1})\bigg]\\
    &=\frac{4}{p^{2}}\bigg[\{f^{(1)}(\tau_{1})-f^{(1)}(\tau_{3})\}^{2}\times\frac{1}{n}\bigg\{2\text{tr}(\Sigma_{1}^{2})+\sum_{k}(E(U_{1}(k)^{4})-3)s_{kk}^{2}\bigg\}\\
    &\quad+\{f^{(1)}(\tau_{2})-f^{(1)}(\tau_{3})\}^{2}\times\frac{1}{m}\bigg\{2\text{tr}(\Sigma_{1}^{2})+\sum_{k}(E(V_{1}(k)^{4})-3)t_{kk}^{2}\bigg\}\bigg],
\end{align*}
where $\Gamma_{1}^{\top}\Gamma_{1}=(s_{kl})_{q\times q}$, $\Gamma_{2}^{\top}\Gamma_{2}=(t_{kl})_{q\times q}$, and we have used Lemma \ref{lemmaa2}. 

Under the local alternative, i.e., Assumption \ref{assumption6}, as $\text{var}(\Delta_{1,2})=o(\text{var}(\Delta_{1,1}))$ and $\text{var}(\Delta_{1,3})=o(\text{var}(\Delta_{1,1}))$, 
$$\frac{\Delta_{1}}{\surd{\text{var}(\Delta_{1})}}=\frac{\Delta_{1,1}}{\surd{\text{var}(\Delta_{1,1})}}+o_{p}(1).$$
Note that Assumptions \ref{assumption1} and \ref{assumption4} correspond to (3.1), (3.2) and (3.3) in \cite{chen2010two}. Besides, under Assumption \ref{assumption2}, one may check that
$$\text{tr}(\Sigma_{i}\Sigma_{j}\Sigma_{l}\Sigma_{h})=O(p)=o(\text{tr}^{2}[(\Sigma_{1}+\Sigma_{2})^{2}]),$$
for $i,j,l,h=$1 or 2, which is (3.6) in \cite{chen2010two}. Therefore, the asymptotic normality of $\Delta_{1,1}$ can be attained by simply mimicking the arguments in Section 6.2 in \cite{chen2010two}. The only difference is the definition of $\phi_{ij}$ in their proof. In our case,
$$\phi_{ij}=\left\{
\begin{aligned}
    n^{-1}(n-1)^{-1}f^{(1)}(\tau_1)Z_{i}^{\top}Z_{j},\quad&\text{if }i,j\in\{1,\ldots,n\},\\
    -n^{-1}m^{-1}f^{(1)}(\tau_3)Z_{i}^{\top}Z_{j},\quad&\text{if }i\in\{1,\ldots,n\}\text{ and }j\in\{n+1,\ldots,n+m\},\\
    m^{-1}(m-1)^{-1}f^{(1)}(\tau_2)Z_{i}^{\top}Z_{j},\quad&\text{if }i,j\in\{n+1,\ldots,n+m\},
\end{aligned}
\right.
$$
where $Z_{i}=X_{i}-\mu_1$ for $i=1,\ldots,n$ and $Z_{j+n}=Y_{j}-\mu_2$ for $j=1,\ldots,m$. Then the asymptotic normality can be proved with verification of the conditions of the martingale CLT (see, for example, Corollary 3.1 of \cite{hall2014martingale}). We omit the details here to avoid duplication. 

Under Assumption \ref{assumption7}, we have either $\text{var}(\Delta_{1,1})=o(\text{var}(\Delta_{1,2}))$ or $\text{var}(\Delta_{1,1})=o(\text{var}(\Delta_{1,3}))$. Therefore, 
$$\frac{\Delta_{1}}{\surd{\text{var}(\Delta_{1})}}=\frac{\Delta_{1,2}+\Delta_{1,3}}{\surd{\text{var}(\Delta_{1,2}+\Delta_{1,3})}}+o_{p}(1).$$
As $\Delta_{1,2}+\Delta_{1,3}$ is a summation of averages of independent variables, the asymptotic normality follows {\color{red}by verifying Lyapunov's condition}. 
\end{proof}

\begin{proof}[Proof of Theorem \ref{theorem3.2}]
Note $\text{var}(\Delta_{1})=\text{var}(\Delta_{1,1})\{1+o(1)\}=\Theta(N^{-2} p^{-1})$ under Assumptions \ref{assumption1}--\ref{assumption4} and \ref{assumption6}. If $N=o(p)$, 
\begin{align*}
   \text{var}(\widetilde{\Delta}_{2})&=O(N^{-1} p^{-2})=o(\text{var}({\color{red}\Delta_{1,1}})),\\ E(\widetilde{\Delta}_{2})-T_{1}&=O(p^{-\frac{3}{2}})=o(\surd{\text{var}({\color{red}\Delta_{1,1}})}), 
\end{align*}
Hence, 
$$E\{(\widetilde{\Delta}_{2}-T_{1})^{2}\}=o(\text{var}({\color{red}\Delta_{1,1}})).$$
Therefore, 
$$\frac{\textup{MMD}_{n,m}^{2}-\Delta_{0}-T_{1}}{\surd{\textup{var}({\color{red}\Delta_{1,1}})}}=\frac{\Delta_{1}}{\surd{\textup{var}({\color{red}\Delta_{1,1}})}}+\frac{\widetilde{\Delta}_{2}-T_{1}}{\surd{\textup{var}({\color{red}\Delta_{1,1}})}}=\frac{{\color{red}\Delta_{1,1}}}{\surd{\textup{var}({\color{red}\Delta_{1,1}})}}+o_{p}(1)\xrightarrow{d}N(0,1).$$
\end{proof}

\begin{proof}[Proof of Theorem \ref{theorem3.4}]
Under Assumptions \ref{assumption1}--\ref{assumption4} and \ref{assumption7}, note $\text{var}(\Delta_{1})=\text{var}(\Delta_{1,2}+\Delta_{1,3})\{1+o(1)\}=\omega(\max\{p^{-3},N^{-2} p^{-1}\})$. As in Theorem \ref{theorem3.2}, it can be checked that
\begin{align*}
   \text{var}(\widetilde{\Delta}_{2})&=O(N^{-1} p^{-2})=o(\text{var}({\color{red}\Delta_{1,2}+\Delta_{1,3}})),\\ E(\widetilde{\Delta}_{2})-T_{1}&=O(p^{-\frac{3}{2}})=o(\surd{\text{var}({\color{red}\Delta_{1,2}+\Delta_{1,3}})}).
\end{align*}
Hence, the asymptotic normality of $\text{MMD}_{n,m}^{2}$ can be justified in the same way. 
\end{proof}

\begin{proof}[Proof of Corollary \ref{corollary3.3} \& \ref{corollary3.5}]
{\color{red}Under $N=o(\surd{p})$, Corollary \ref{corollary3.3} follows as $T_{1}=O(p^{-1})=o(\surd{\text{var}(\Delta_{1,1}}))$. Corollary \ref{corollary3.5} can be justified in the same way.}
\end{proof}

\begin{proof}[Proof of Theorem \ref{theorem3.6}]
Note $\widetilde{\Delta}_{2}/\surd{\text{var}({\color{red}\Delta_{1,1}})}=o_{p}(1)$ under the null. Thus, 
$$\frac{\textup{MMD}_{n,m}^{2}}{\surd{\textup{var}({\color{red}\Delta_{1,1}})}}=\frac{{\color{red}\Delta_{1,1}}}{\surd{\textup{var}({\color{red}\Delta_{1,1}})}}+\frac{\widetilde{\Delta}_{2}}{\surd{\textup{var}({\color{red}\Delta_{1,1}})}}\xrightarrow{d}N(0,1).$$
\end{proof}

\begin{proof}[Proof of Theorem \ref{theorem3.7}]
According to Theorem 2 in \cite{chen2010two}, $\widehat{\text{tr}(\Sigma_{i}^{2})}$ and $\widehat{\text{tr}(\Sigma_{1}\Sigma_{2})}$ are ratio-consistent estimators of $\text{tr}(\Sigma_{i}^{2})$ and $\text{tr}(\Sigma_{1}\Sigma_{2})$. Thus, we only need to prove that $f^{(1)}(\widehat{\tau_{i}})$ is ratio-consistent to $f^{(1)}(\tau_{i})$. We shall show the ratio consistency of $f^{(1)}(\widehat{\tau_{1}})$ here. The proofs for $f^{(1)}(\widehat{\tau_{i}}),i=2,3$ are similar. 

We note that
\begin{align*}
\frac{1}{n}\sum_{i}X_{i}^{\top}(X_{i}-\bar{X}_{-i})&=\frac{1}{n}\sum_i \|X_i\|_{2}^{2}-\frac{1}{n(n-1)}\sum_{i_1\neq i_2}X_{i_1}^\top X_{i_2}.
\end{align*}
Then
\begin{align*}
    \text{var}(\widehat{\tau_{1}})&\le 2\times\frac{4}{p^2}\times\bigg\{\text{var}\bigg(\frac{1}{n}\sum_{i}\|X_i\|_{2}^{2}\bigg)+\text{var}\bigg(\frac{1}{n(n-1)}\sum_{i_1\neq i_2}X_{i_1}^\top X_{i_2}\bigg)\bigg\},
\end{align*}
where
\begin{align*}
    \text{var}\bigg(\frac{1}{n}\sum_{i}\|X_i\|_{2}^{2}\bigg)&=\frac{1}{n}\text{var}(\|X_1\|_{2}^{2})\\
    &\le \frac{C}{n}\times[\text{var}(\|X_1-\mu_1\|_{2}^{2})+\text{var}\{\mu_1^{\top}(X_1-\mu_1)\}]\\
    &=O(p N^{-1}),
\end{align*}
and
\begin{align*}
    \text{var}\bigg(\frac{1}{n(n-1)}\sum_{i_1\neq i_2}X_{i_1}^\top X_{i_2}\bigg)=\frac{2}{n(n-1)}\text{tr}(\Sigma_{1}^{2})+\frac{4}{n}\mu_{1}^{\top}\Sigma_{1}\mu_{1}=O(p N^{-1}). 
\end{align*}
Thus, $\text{var}(\widehat{\tau_{1}})=O(p^{-1} N^{-1})=o(1)$, which implies $\widehat{\tau_{1}}\xrightarrow{p}\tau_{1}$. By the continuous mapping theorem, we have
$$|f^{(1)}(\widehat{\tau_{1}})-f^{(1)}(\tau_{1})|=o_{p}(1)=o_{p}(f^{(1)}(\tau_{1})).$$
\end{proof}

\begin{proof}[Proof of Proposition \ref{prop1}]
When $s=1$, one can verify that $I_1=\Delta_{1,1}$ and $I_2+I_3=\Delta_{1,2}+\Delta_{1,3}$ in the decomposition in Section \ref{secB.4}. Therefore, Assumption \ref{assumption8} means that $\text{var}(\Delta_{1,2}+\Delta_{1,3})=\text{var}(I_2+I_3)=o(N^{-2}p^{-1})$. The equivalence follows. 
\end{proof}

\begin{proof}[Proof of Proposition \ref{prop2}]
Let $\Gamma_1=\Gamma_2=\Gamma=(\gamma_{kl})_{p\times q}$ and $\Gamma^\top \Gamma=(s_{kl})_{q\times q}$. Without loss of generality, we assume that $\mu=0$ (as otherwise we can work with $X_i-\mu$ and $Y_j-\mu$). Under $\mu_1=\mu_2$ and $\Sigma_1=\Sigma_2$, which implies $\tau_1=\tau_3:=\tau$, 
\begin{align*}
&\quad f^{(2)}(\tau_1)E(\widetilde{X}_{1,2}^2\mid X_1)-f^{(2)}(\tau_3)E(\widetilde{Z}_{1,1}^2\mid X_1)\\
&=p^{-2}f^{(2)}(\tau)\left\{E(\|X_1-X_2\|_2^4\mid X_1)-E(\|X_1-Y_1\|_2^4\mid X_1)\right\}
\\&=-p^{-2}f^{(2)}(\tau)\big\{4E(\|X_2\|_2^2X_2^\top-\|Y_1\|_2^2Y_1^\top)X_1+E(\|X_2\|_{2}^{4}-\|Y_1\|_{2}^{4})\big\}.
\end{align*}
Then under Assumptions \ref{assumption1}--\ref{assumption2}, we have
\begin{align*}
&\quad\text{var}\{f^{(2)}(\tau_1)E(\widetilde{X}_{1,2}^2\mid X_1)-f^{(2)}(\tau_3)E(\widetilde{Z}_{1,1}^s\mid X_1)\}\\
&\le O(p^{-4})\|\Sigma_1\|_{\rm op}\times\|E(\|X\|_2^2 X - \|Y\|_2^2Y)\|_2^2
\\&=O(p^{-4})\sum_i\bigg\{\sum_j E(x_j^2x_i-y_j^2y_i)\bigg\}^2
\\&=O(p^{-4})\sum_i\bigg\{\sum_j\sum_k \gamma_{j,k}^2\gamma_{i,k}(\mu_{k,3}^{(1)}-\mu_{k,3}^{(2)})\bigg\}^2
\\&=O(p^{-4})\sum_i\sum_{j,j'}\sum_{k,k'}
\gamma_{j,k}^2\gamma_{i,k}\gamma_{j',k'}^2\gamma_{i,k'}(\mu_{k,3}^{(1)}-\mu_{k,3}^{(2)})(\mu_{k',3}^{(1)}-\mu_{k',3}^{(2)})
\\&=O(p^{-4})\sum_{k,k'}s_{k,k}s_{k,k'}s_{k',k'}(\mu_{k,3}^{(1)}-\mu_{k,3}^{(2)})(\mu_{k',3}^{(1)}-\mu_{k',3}^{(2)})
\\&\leq O(p^{-4})\|\Gamma^\top \Gamma\|_{\rm op}\times\bigg[\sum_{k}\{s_{k,k}(\mu_{k,3}^{(1)}-\mu_{k,3}^{(2)})\}^2\times\sum_{k'}\{s_{k',k'}(\mu_{k',3}^{(1)}-\mu_{k',3}^{(2)})\}^2\bigg]^{1/2}
\\&= O(p^{-4})\sum_{k}\{s_{k,k}(\mu_{k,3}^{(1)}-\mu_{k,3}^{(2)})\}^2,
\end{align*}
where we have used the definition of the operator norm that $\|A\|_{\text{op}}=\sup_{\|a\|_2\leq 1,\|b\|_2\leq 1}a^\top A b$ in the second to the last line. 
\end{proof}

\begin{proof}[Proof of Theorem \ref{theorem3.9}]
Recall $\text{MMD}^2(P_X,P_Y)=E(\textup{MMD}_{n,m}^{2})=\Delta_0+\sum^{l-1}_{s=2}E(\Delta_s)+E(\widetilde{\Delta}_l)$. Under Assumption \ref{assumption8}, we have
$\text{var}(\Delta_s)=o(N^{-2} p^{-1})$ for $s=2,\ldots,l-1$. Moreover, when 
$N=o(p^{l-1})$, $\text{var}(\widetilde{\Delta}_l)=O(N^{-1} p^{-l})=o(N^{-2} p^{-1}).$ Therefore, we have
\begin{align*}
\frac{\textup{MMD}_{n,m}^{2}-\text{MMD}^2(P_X,P_Y)}{\surd{\textup{var}({\color{red}\Delta_{1,1}})}}&=\frac{\Delta_{1}}{\surd{\textup{var}({\color{red}\Delta_{1,1}})}}+\frac{\sum_{s=2}^{l-1}(\Delta_{s}-E(\Delta_s))}{\surd{\textup{var}({\color{red}\Delta_{1,1}})}}+\frac{\widetilde{\Delta}_{l}-E(\widetilde{\Delta}_l)}{\surd{\textup{var}({\color{red}\Delta_{1,1}})}}
\\&=\frac{{\color{red}\Delta_{1,1}}}{\surd{\textup{var}({\color{red}\Delta_{1,1}})}}+o_{p}(1)+o_{p}(1)\xrightarrow{d}N(0,1).    
\end{align*}
\end{proof}

\begin{proof}[Proof of Lemma \ref{lemma3.8}]
We divide the proof into four parts.

\textbf{Part 1.} Recall that
\begin{align*}
T_{l-1}=\frac{f^{(l)}(\tau)}{l!p^{l}}\times\left[E\{(\|X_{1}-X_{2}\|_{2}^{2}-p\tau)^{l}\}+E\{(\|Y_{1}-Y_{2}\|_{2}^{2}-p\tau)^{l}\}-2E\{(\|X_{1}-Y_{1}\|_{2}^{2}-p\tau)^{l}\}\right].
\end{align*}
Using the binomial expansion formula, we have
\begin{align*}
&\quad E\{(\|X_{1}-X_{2}\|_{2}^{2}-p\tau)^{l}\}+E\{(\|Y_{1}-Y_{2}\|_{2}^{2}-p\tau)^{l}\}-2E\{(\|X_{1}-Y_{1}\|_{2}^{2}-p\tau)^{l}\}
\\&=\sum^{l}_{a=0}\binom{l}{a}(-p\tau)^{l-a}\left(E\|X_1- X_2\|^{2a}_2+E\|Y_1-Y_2\|^{2a}_2-2E\|X_1- Y_1\|^{2a}_2\right).
\end{align*}
Following the arguments in Section \ref{secB.5}, we can show that
$E\|X_1- X_2\|^{2a}_2+E\|Y_1-Y_2\|^{2a}_2-2E\|X_1- Y_1\|^{2a}_2=0$ whenever $a\le l-1.$ Thus, we only need to consider the term with $a=l$ in the summation. Without loss of generality, we set $\mu=0$ in the arguments below. Note that
\begin{align*}
&\quad E\|X_1- X_2\|^{2l}_2+E\|Y_1-Y_2\|^{2l}_2-2E\|X_1- Y_1\|^{2l}_2
\\&=\sum_{0\leq a_1+a_2\leq l}\frac{(-2)^{l-a_1-a_2}l!}{a_1!a_2!(l-a_1-a_2)!}\Big[E\{\|X_1\|_{2}^{2a_1}\|X_2\|_{2}^{2a_2}(X_1^\top X_2)^{l-a_1-a_2}\}
\\&\quad+
E\{\|Y_1\|_{2}^{2a_1}\|Y_2\|_{2}^{2a_2}(Y_1^\top Y_2)^{l-a_1-a_2}\}-2E\{\|X_1\|_{2}^{2a_1}\|Y_1\|_{2}^{2a_2}(X_1^\top Y_1)^{l-a_1-a_2}\}\Big].
\end{align*}
Again using the arguments in Section \ref{secB.5}, we can show that the summands will be non-zero only when the degrees of $X_1$ and $X_2$ ($Y_1$ and $Y_2$, $X_1$ and $Y_1$) are equal, i.e., $a_1=a_2$. When $a_1=a_2$, the summand inside the square brackets becomes
\begin{align*}
\sum_{1\leq i_1,\ldots,i_{l-a_1-a_2}\leq p}(\mu_{i_1,\ldots,i_{l-a_1-a_2}}^{(1)}-\mu_{i_1,\ldots,i_{l-a_1-a_2}}^{(2)})^2=\|\mathcal{T}_{1,l}^{(a_1,a_2)}-\mathcal{T}_{2,l}^{(a_1,a_2)}\|_{\rm F}^2,
\end{align*}

which thus implies the desired result.

Let $\Gamma_1=\Gamma_2=\Gamma=(\gamma_{kl})_{p\times q}$, $\Gamma^\top \Gamma=(s_{kl})_{q\times q}$, $\mu_{k,s}^{(1)}=E(U_1(k)^s)$ and $\mu_{k,s}^{(2)}=E(V_1(k)^s)$ for $k=1,\ldots,q$ and $s\ge 1$. Note that $\max_{k,l}|s_{kl}|\leq K$ under Assumption \ref{assumption2}. For $a_1=a_2=0$, as $\mu_{k,s}^{(1)}=\mu_{k,s}^{(2)}$ for $s=1,\ldots,l-1$, we have
\begin{align*}
\|\mathcal{T}_{1,l}^{(0,0)}-\mathcal{T}_{2,l}^{(0,0)}\|_{\rm F}^2&=\sum_{i_1,\ldots,i_l}\bigg\{\sum_k\gamma_{i_1,k}\gamma_{i_2,k}\cdots\gamma_{i_l,k}(\mu_{k,l}^{(1)}-\mu_{k,l}^{(2)})\bigg\}^2
\\&=\sum_{k_1,k_2}s_{k_1,k_2}^l(\mu_{k_1,l}^{(1)}-\mu_{k_1,l}^{(2)})(\mu_{k_2,l}^{(1)}-\mu_{k_2,l}^{(2)})
\\&\leq CK^{l-2}\sum_{k_1,k_2}s_{k_1,k_2}^2=CK^{l-2}\text{tr}(\Gamma^\top\Gamma\Gamma^\top\Gamma)=CK^{l-2}\text{tr}(\Sigma^2)=O(p).
\end{align*}
Now consider $a_1=a_2\geq 1$. Based on similar calculation as above, we obtain
\begin{align*}
\|\mathcal{T}_{1,l}^{(a_1,a_2)}-\mathcal{T}_{2,l}^{(a_1,a_2)}\|_{\rm F}^2 
=\sum_{k_1,k_2}s_{k_1,k_2}^{l-2a_1}s_{k_1,k_1}^{a_1}s_{k_2,k_2}^{a_1}(\mu_{k_1,l}^{(1)}-\mu_{k_1,l}^{(2)})(\mu_{k_2,l}^{(1)}-\mu_{k_2,l}^{(2)}).
\end{align*}
When $l=2r-1$ for $r\ge 2$, we have $l-2a_1\ge 1$, and thus
\begin{align*}
\|\mathcal{T}_{1,l}^{(a_1,a_2)}-\mathcal{T}_{2,l}^{(a_1,a_2)}\|_{\rm F}^2 
&=\sum_{k_1,k_2}s_{k_1,k_2}s_{k_1,k_2}^{l-2a_1-1}s_{k_1,k_1}^{a_1}s_{k_2,k_2}^{a_1}(\mu_{k_1,l}^{(1)}-\mu_{k_1,l}^{(2)})(\mu_{k_2,l}^{(1)}-\mu_{k_2,l}^{(2)})\\
&\le CK^{l-1}\sum_{k_1,k_2}s_{k_1,k_2}=O(p).
\end{align*}
When $l=2r$ for $r\ge 2$, $\|\mathcal{T}_{1,l}^{(a_1,a_2)}-\mathcal{T}_{2,l}^{(a_1,a_2)}\|_{\rm F}^2=O(p)$ when $l-2a_1>0$ (i.e., $a_1=a_2<r$) as argued above. However, when $l=2r$ and $l-2a_1=0$, i.e., $a_1=a_2=r$, we have
\begin{align*}
\|\mathcal{T}_{1,l}^{(a_1,a_2)}-\mathcal{T}_{2,l}^{(a_1,a_2)}\|_{\rm F}^2 
&=\sum_{k_1,k_2}s_{k_1,k_1}^{r}s_{k_2,k_2}^{r}(\mu_{k_1,l}^{(1)}-\mu_{k_1,l}^{(2)})(\mu_{k_2,l}^{(1)}-\mu_{k_2,l}^{(2)})\\
&=\bigg\{\sum_{k}s_{k,k}^{r}(\mu_{k,l}^{(1)}-\mu_{k,l}^{(2)})\bigg\}^{2}=O(p^2).
\end{align*}
Thus, $T_{l-1}=O(p^{-2r+2})$ when $l=2r-1$ or $l=2r$.

\textbf{Part 2.} Next we study $T_{s-1}$ for $s\geq l+1$. Note that
\begin{align*}
&\quad E\{(\|X_{1}-X_{2}\|_{2}^{2}-p\tau)^{s}\}+E\{(\|Y_{1}-Y_{2}\|_{2}^{2}-p\tau)^{s}\}-2E\{(\|X_{1}-Y_{1}\|_{2}^{2}-p\tau)^{s}\}
\\&=\sum^{s}_{a=0}\binom{s}{a}(-p\tau)^{s-a}\left(E\|X_1- X_2\|^{2a}_2+E\|Y_1-Y_2\|^{2a}_2-2E\|X_1- Y_1\|^{2a}_2\right)
\\&=\sum^{s}_{a=l}\binom{s}{a}(-p\tau)^{s-a}\left(E\|X_1- X_2\|^{2a}_2+E\|Y_1-Y_2\|^{2a}_2-2E\|X_1- Y_1\|^{2a}_2\right),
\end{align*}
where the last equality follows from the arguments in Section \ref{secB.5}. 
Without loss of generality, we set $\mu=0$ in the argument below. Note that
\begin{align*}
&\quad E\|X_1- X_2\|^{2a}_2+E\|Y_1-Y_2\|^{2a}_2-2E\|X_1- Y_1\|^{2a}_2
\\&=\sum_{0\leq a_1+a_2\leq a}\frac{(-2)^{a-a_1-a_2}a!}{a_1!a_2!(a-a_1-a_2)!}\Big[E\{\|X_1\|_{2}^{2a_1}\|X_2\|_{2}^{2a_2}(X_1^\top X_2)^{a-a_1-a_2}\}
\\&\quad+E\{\|Y_1\|_{2}^{2a_1}\|Y_2\|_{2}^{2a_2}(Y_1^\top Y_2)^{a-a_1-a_2}\}-E\{\|X_1\|_{2}^{2a_1}\|Y_1\|_{2}^{2a_2}(X_1^\top Y_1)^{a-a_1-a_2}\}
\\&\quad-E\{\|X_1\|_{2}^{2a_2}\|Y_1\|_{2}^{2a_1}(X_1^\top Y_1)^{a-a_1-a_2}\}\Big].
\end{align*}
Using the arguments in Section \ref{secB.5}, we can show that the summand vanishes whenever $a+a_1-a_2<l$ or $a+a_2-a_1<l.$
Therefore, we only need to consider those terms with $a-l\geq |a_1-a_2|$. Expanding the terms inside the square brackets, we obtain
\begin{align*}
&\quad E\|X_1- X_2\|^{2a}_2+E\|Y_1-Y_2\|^{2a}_2-2E\|X_1- Y_1\|^{2a}_2
\\&=\sum_{0\leq a_1+a_2\leq a,|a_1-a_2|\leq a-l}\frac{(-2)^{a-a_1-a_2}a!}{a_1!a_2!(a-a_1-a_2)!}\sum_{i_1,\ldots,i_{a-a_1-a_2}}\sum_{j_1,\ldots,j_{a_1}}\sum_{j_1',\ldots,j_{a_2}'}
\\&\quad\Bigg\{E\left(\prod^{a-a_1-a_2}_{s=1}x_{i_s}\prod^{a_1}_{s=1}x_{j_s}^2\right)E\left(\prod^{a-a_1-a_2}_{s=1}x_{i_s}\prod^{a_2}_{s=1}x_{j_s'}^2\right)
+E\left(\prod^{a-a_1-a_2}_{s=1}y_{i_s}\prod^{a_1}_{s=1}y_{j_s}^2\right)E\left(\prod^{a-a_1-a_2}_{s=1}y_{i_s}\prod^{a_2}_{s=1}y_{j_s'}^2\right)
\\&\quad-E\left(\prod^{a-a_1-a_2}_{s=1}x_{i_s}\prod^{a_1}_{s=1}x_{j_s}^2\right)E\left(\prod^{a-a_1-a_2}_{s=1}y_{i_s}\prod^{a_2}_{s=1}y_{j_s'}^2\right)
-E\left(\prod^{a-a_1-a_2}_{s=1}x_{i_s}\prod^{a_2}_{s=1}x_{j_s'}^2\right)E\left(\prod^{a-a_1-a_2}_{s=1}y_{i_s}\prod^{a_1}_{s=1}y_{j_s}^2\right)
\Bigg\}
\\&=\sum_{0\leq a_1+a_2\leq a,|a_1-a_2|\leq a-l}\frac{(-2)^{a-a_1-a_2}a!}{a_1!a_2!(a-a_1-a_2)!}\sum_{i_1,\ldots,i_{a-a_1-a_2}}
\\&\quad\sum_{j_1,\ldots,j_{a_1}}\Bigg\{E\left(\prod^{a-a_1-a_2}_{s=1}x_{i_s}\prod^{a_1}_{s=1}x_{j_s}^2\right)-
E\left(\prod^{a-a_1-a_2}_{s=1}y_{i_s}\prod^{a_1}_{s=1}y_{j_s}^2\right)\Bigg\}
\\&\quad\times \sum_{j_1',\ldots,j_{a_2}'}\Bigg\{E\left(\prod^{a-a_1-a_2}_{s=1}x_{i_s}\prod^{a_2}_{s=1}x_{j_s'}^2\right)-
E\left(\prod^{a-a_1-a_2}_{s=1}y_{i_s}\prod^{a_2}_{s=1}y_{j_s'}^2\right)\Bigg\}
.
\end{align*}
By the Cauchy–Schwarz inequality, we have 
\begin{align*}
&\quad E\|X_1- X_2\|^{2a}_2+E\|Y_1-Y_2\|^{2a}_2-2E\|X_1- Y_1\|^{2a}_2
\\&\leq \sum_{0\leq a_1+a_2\leq a,|a_1-a_2|\leq a-l}\frac{(-2)^{a-a_1-a_2}a!}{a_1!a_2!(a-a_1-a_2)!}
\|\mathcal{T}_{1,a}^{(a_1,a_2)}-\mathcal{T}_{2,a}^{(a_1,a_2)}\|_{\rm F}
\|\mathcal{T}_{1,a}^{(a_2,a_1)}-\mathcal{T}_{2,a}^{(a_2,a_1)}\|_{\rm F}.
\end{align*}
Under the assumption 
$\|\mathcal{T}_{1,a}^{(a_1,a_2)}-\mathcal{T}_{2,a}^{(a_1,a_2)}\|_{\rm F}^2=O(p^{a+a_1-a_2-2r+2})$
when $l=2r-1$ or $l=2r$ with $r\geq 2$, we obtain
\begin{align*}
E\|X_1- X_2\|^{2a}_2+E\|Y_1-Y_2\|^{2a}_2-2E\|X_1- Y_1\|^{2a}_2=O(p^{a-2r+2}),
\end{align*}
for $l=2r-1$ or $l=2r$ with $r\geq 2.$ It implies that 
$T_{s-1}=O(p^{-2r+2})$ when $l=2r-1$ or $l=2r$ for $l+1\leq s\leq 2(l-1)-1$.

\textbf{Part 3.} Now we verify Assumption \ref{assumption9} when $X_i=U_i+\mu$ and $Y_j=V_j+\mu$.
As $X$ and $Y$ share the first $(l-1)$ moments, for $\mathcal{T}_{1,a}^{(a_1,a_2)}-\mathcal{T}_{2,a}^{(a_1,a_2)}$ to be nonzero, $i_1,\ldots,i_{a-a_1-a_2}$ and $j_1,\ldots,j_{a_1}$ must share some common indices. Suppose $c_1$ indices out of $i_1,\ldots,i_{a-a_1-a_2}$
and $c_2$ indices out of $j_1,\ldots,j_{a_1}$ are the same, where $c_1+2c_2\geq l.$ 

We consider two cases: (i) $c_1\ge 1$; (ii) $c_1=0$. When $c_1\ge 1$, fixing the common indices and summing over all other indices in $j_1,\ldots,j_{a_1}$, the order of $\mathcal{T}_{1,a}^{(a_1,a_2)}-\mathcal{T}_{2,a}^{(a_1,a_2)}$ is at most $p^{a_1-c_2}$. Now summing over $i_1,\ldots,i_{a-a_1-a_2}$ (the number of free indices is at most $a-a_1-a_2-c_1+1$), 
we have 
\begin{align*}
\|\mathcal{T}_{1,a}^{(a_1,a_2)}-\mathcal{T}_{2,a}^{(a_1,a_2)}\|_{\rm F}^2&=O(p^{a-a_1-a_2-c_1+1+2(a_1-c_2)})=O(p^{a+a_1-a_2-c_1-2c_2+1})\\
&=O(p^{a+a_1-a_2-l+1})=O(p^{a+a_1-a_2-2r+2}),  
\end{align*}
where we have used the fact that $c_1+2c_2\geq l.$

When $c_1=0$ and $l=2r-1$, we have $2c_2>l$. Summing over $j_1,\ldots,j_{a_1}$, the order of $\mathcal{T}_{1,a}^{(a_1,a_2)}-\mathcal{T}_{2,a}^{(a_1,a_2)}$ is at most $p^{a_1-c_2+1}$. Now summing over $i_1,\ldots,i_{a-a_1-a_2}$ (the number of free indices is at most $a-a_1-a_2$ as $c_1=0$), 
we have 
\begin{align*}
\|\mathcal{T}_{1,a}^{(a_1,a_2)}-\mathcal{T}_{2,a}^{(a_1,a_2)}\|_{\rm F}^2=O(p^{a-a_1-a_2+2(a_1-c_2+1)})=O(p^{a+a_1-a_2-2c_2+2})=O(p^{a+a_1-a_2-2r+2}),  
\end{align*}
where we have used the fact that $2c_2>l$, i.e., $2c_2\ge l+1=2r$. 

Similarly, when $c_1=0$ and $l=2r$, we have $2c_2\ge l$, and
\begin{align*}
\|\mathcal{T}_{1,a}^{(a_1,a_2)}-\mathcal{T}_{2,a}^{(a_1,a_2)}\|_{\rm F}^2=O(p^{a-a_1-a_2+2(a_1-c_2+1)})=O(p^{a+a_1-a_2-2c_2+2})=O(p^{a+a_1-a_2-2r+2}).
\end{align*}

\textbf{Part 4.} In the end, applying the Taylor expansions up to order $2(l-1)$ to the sample MMD, we have \begin{align*}
\textup{MMD}_{n,m}^{2}=\Delta_{0}+\sum^{2(l-1)-1}_{s=1}\Delta_{s}+\widetilde{\Delta}_{2(l-1)}.    
\end{align*}
Under $\mu_{1}=\mu_{2}$, $\Sigma_{1}=\Sigma_{2}$, $E(U_{1}(k)^{s})=E(V_{1}(k)^{s})$ for $s\leq l-1$ and $1\leq k\leq q$, we have $T_s=0$ for $1\leq s\leq l-2$. By taking expectation on both sides, we get
$$\text{MMD}^2(P_X,P_Y)=\sum^{2(l-1)-1}_{s=l}T_{s-1}+E(\widetilde{\Delta}_{2(l-1)}).$$ 
As $E\{(\|X_1-X_2\|^2-p\tau_1)^{s}\}=O(p^{s/2})$, we can show that
$E(\widetilde{\Delta}_{2(l-1)})=O(p^{-l+1})$. 

Combining with the result that $T_{s-1}=O(p^{-l+1})$ when $l=2r-1$, and $T_{s-1}=O(p^{-l+2})$ when $l=2r$, for $l\leq s\leq 2(l-1)-1$, we obtain $\textup{MMD}^2(P_X,P_Y)=O(p^{-l+1})$ when $l=2r-1$, and $\textup{MMD}^2(P_X,P_Y)=O(p^{-l+2})$ when $l=2r$. 
\end{proof}

\begin{proof}[Proof of Corollary \ref{Coro-trivial-1}]
By Lemma \ref{lemma3.8},
$$\text{MMD}^2(P_X,P_Y)/\surd{\text{var}({\color{red}\Delta_{1,1}})}=o(1)$$ 
provided that $N=o(p^{l-3/2})$ when $l=2r-1$, or $N=o(p^{l-5/2})$ when $l=2r$. The result thus follows from Theorem \ref{theorem3.9}.
\end{proof}

\begin{proof}[Proof of Lemma \ref{lemma-delta0}]
As the ``if'' part is trivial, we focus on the ``only if'' part. Note that
\begin{align*}
0&=\frac{1}{2}\{f(\tau_1)+f(\tau_2)\}-f(\tau_3)
\geq f\left(\frac{\tau_1+\tau_2}{2}\right)-f(\tau_3)\geq 0,
\end{align*}
where the first inequality follows from the convexity and the second inequality is due to the monotonicity. Thus both inequalities become equalities and we must have $\tau_1=\tau_2$ and $\tau_1+\tau_2=2\tau_3$, which imply the desired result.
\end{proof}

\section{Additional simulations}\label{secD}
\subsection{Accuracy of the normal approximation under the null}
\begin{figure}
    \centering
    \includegraphics[width=\textwidth]{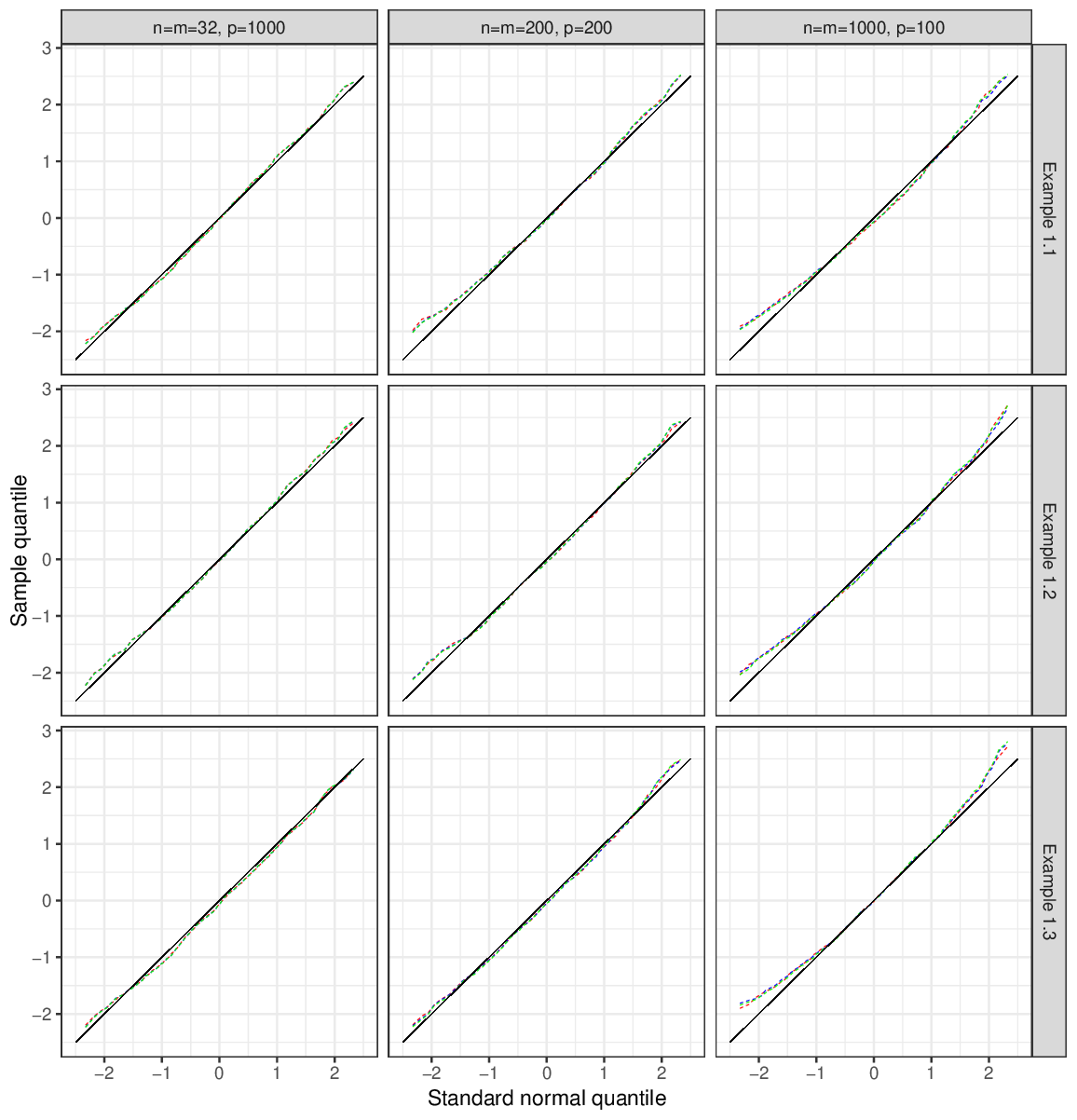}
    \caption{\footnotesize QQ plots for the MMD-based test statistics, where the results are obtained based on 1000 replications. The dashed red, dashed blue, dashed green, and solid black lines correspond to the energy distance, Gaussian kernel, Laplace kernel and the 45-degree line, respectively.}
    \label{fig:null}
\end{figure}

To verify the null CLT derived in Section \ref{sec3.3}, we first present the results of Example \ref{example1} in Figure \ref{fig:null}. As can be seen, the normal approximation appears to be quite accurate in all cases.

\subsection{How the accuracy of the normal approximation changes with respect to the dimension/sample sizes}
\begin{figure}
    \centering
    \includegraphics[width=\textwidth]{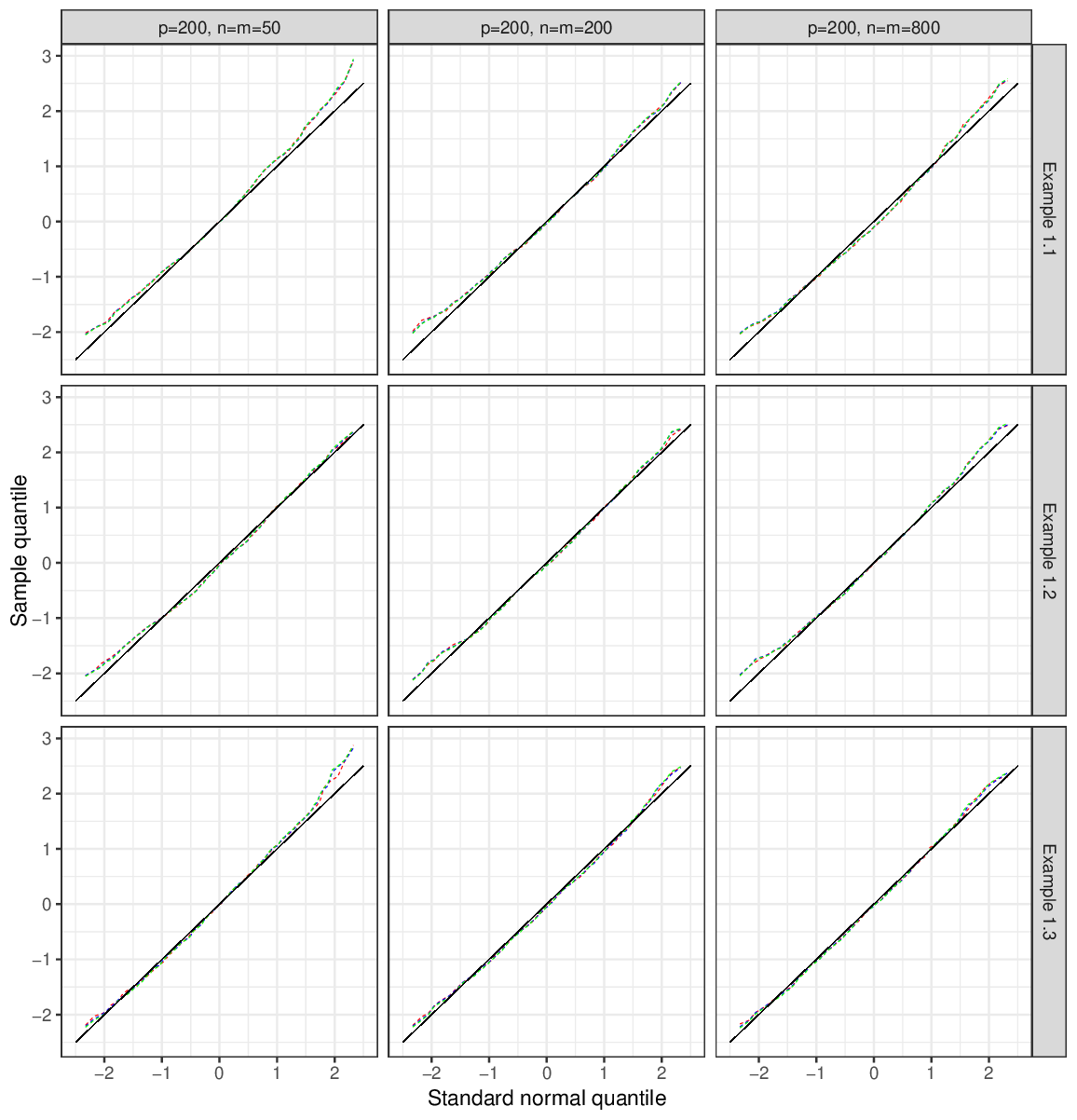}
    \caption{\footnotesize QQ plots for the MMD-based test statistics, where the dimension is fixed, the sample sizes are varying, and the results are obtained based on 1000 replications. The dashed red, dashed blue, dashed green, and solid black lines correspond to the energy distance, Gaussian kernel, Laplace kernel and the 45-degree line, respectively. }
    \label{fig:fixp}
\end{figure}

\begin{figure}
    \centering
    \includegraphics[width=\textwidth]{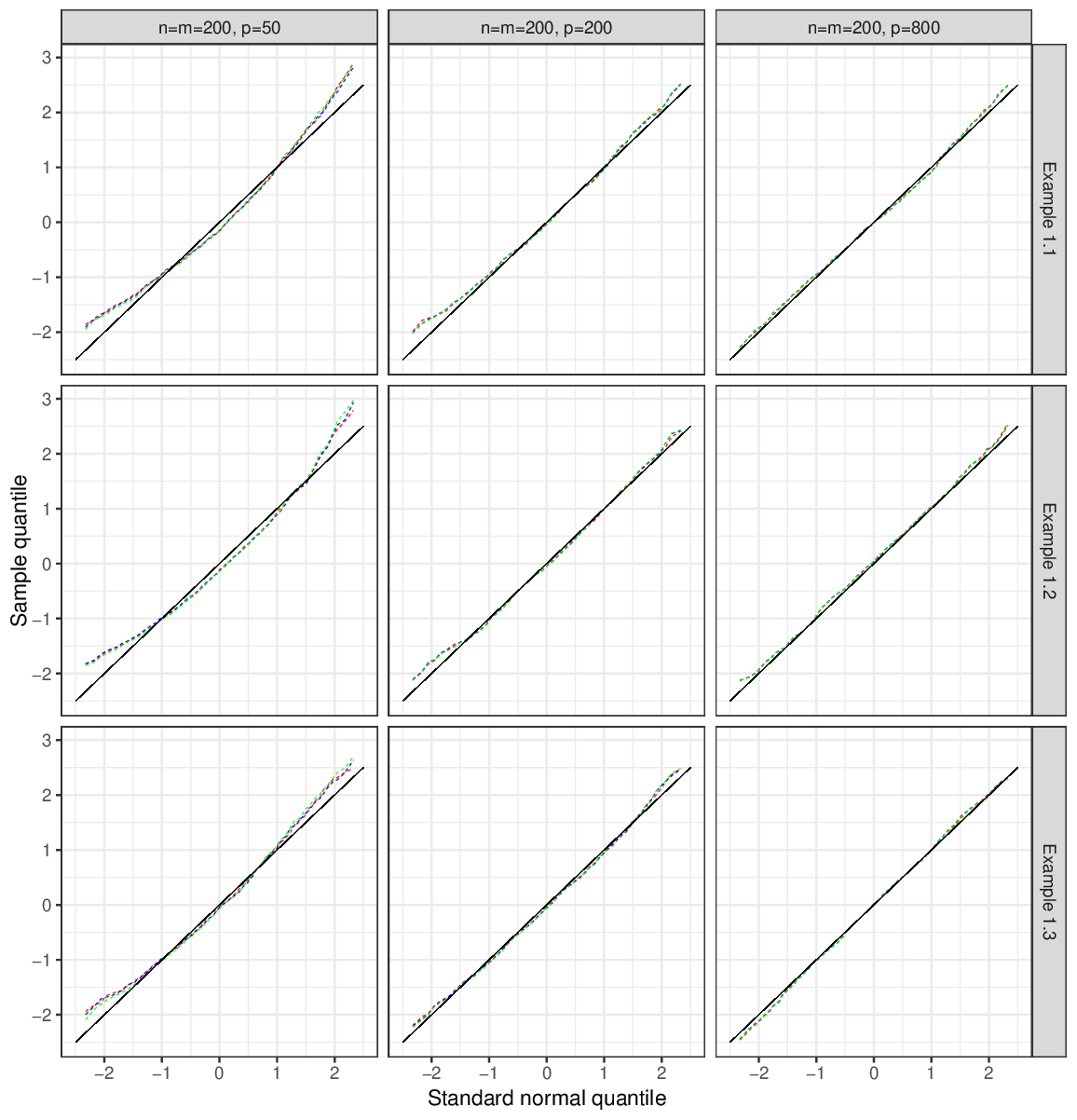}
    \caption{\footnotesize QQ plots for the MMD-based test statistics, where the sample sizes are fixed, the dimension is varying and the results are obtained based on 1000 replications. The dashed red, dashed blue, dashed green, and solid black lines correspond to the energy distance, Gaussian kernel, Laplace kernel and the 45-degree line, respectively.}
    \label{fig:fixn}
\end{figure}
To examine how the accuracy of the normal approximation is affected by the dimension and sample sizes, we follow the setting in Example \ref{example1} and consider the following two scenarios:
\begin{enumerate}
    \item Fixing the sample size $n=200$, we compare the normal approximation for different dimensions $p\in\{50,200,800\}$. 
    \item Fixing the dimension $p=200$, we compare the normal approximation for different sample sizes $n\in\{50,200,800\}$.
\end{enumerate}

In view of Figure \ref{fig:fixn}, the accuracy of the normal approximation improves when the sample size $n$ is fixed and the dimension $p$ grows, which demonstrates ``the blessing of dimensionality''. However, from Figure \ref{fig:fixp}, the accuracy of the normal approximation does not seem to improve much when the dimension $p$ is fixed and the sample size $n$ grows. Moreover, the normal approximation appears to be accurate even when $n$ is small, which can also be seen from Fig. \ref{fig:null}. 
This is not surprising as {\color{red}$\text{var}(\widetilde{\Delta}_{2})=O(N^{-2}p^{-2})$ and $\text{var}(\Delta_{1,1})=\Theta(N^{-2}p^{-1})$. It is $p\rightarrow\infty$ that ensures the remainder term $\widetilde{\Delta}_{2}$ is asymptotically negligible. }

\subsection{Comparison with other methods}
In this section, we compare the power of the MMD-based test with the approaches in \citet{chen2017new} and \citet{chakraborty2021new}. For the MMD-based test, we use the Gaussian kernel with the bandwidth $\gamma=2p$. For Chen and Friedman's graph-based test, we construct a minimal spanning tree as the similarity graph, using the $L_{2}$ distance. For Chakraborty and Zhang's method, we consider the unit group size in their new distance. We follow the settings in Examples \ref{example2}--\ref{example3} and set the significance level $\alpha=0.05$. 

\begin{figure}
    \centering
    \includegraphics[width=\textwidth]{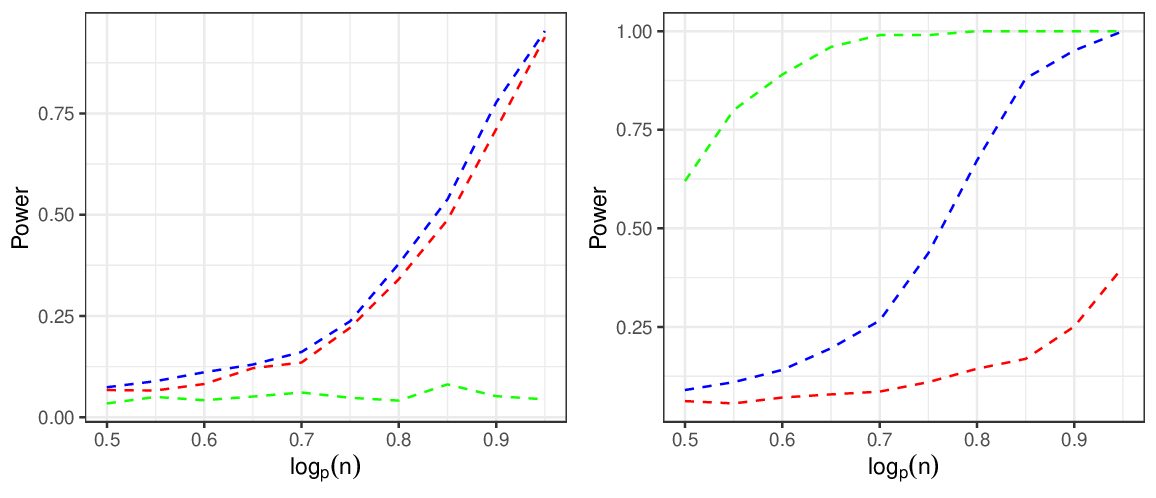}
    \caption{\footnotesize Empirical power for different tests in Example \ref{example2} (left) and Example \ref{example3} (right). The results are obtained based on 1000 replications. The dashed green, red and blue lines represent the power curves associated with Chen and Friedman's graph-based test, Chakraborty and Zhang's test, and the proposed MMD-based test, respectively. }
    \label{fig:cpr}
\end{figure}
As seen from Figure \ref{fig:cpr}, the MMD-based test and Chakraborty and Zhang's test perform similarly in Example \ref{example2}, which dominate the graph-based test. In Example \ref{example3}, the graph-based test delivers the highest power followed by the MMD-based test which dominates Chakraborty and Zhang's test.

\subsection{Data from a general distribution other than the multivariate model (\ref{eq-m})}
We consider two data generating processes: (i) the Dirichlet distribution and (ii) the uniform distribution on the Euclidean sphere in $\mathbb{R}^{p}$ with the center at the origin and the radius equal to $\surd{p}$. We intend to numerically check whether the normal approximation under the null hypothesis is still valid in these two cases. The former case is motivated by the compositional data analysis \citep{greenacre2021compositional}, while the latter case is adopted in the study of neural networks and kernel methods in the high dimension \citep{ghorbani2020neural,ghorbani2021linearized}. As in Example \ref{example1}, we consider three different high-dimensional settings $(n,p)\in\{(32,1000),(200,200),(1000,100)\}$. 
\begin{example}
$X_i/p\stackrel{i.i.d.}{\sim}\text{Dir}(1,\ldots,1)$ for $i=1,\ldots,n$, and $Y_j/p\stackrel{i.i.d.}{\sim}\text{Dir}(1,\ldots,1)$ for $j=1,\ldots,m$. The scaling factor $p$ is used to ensure that $\text{tr}(\Sigma_1)=\text{tr}(\Sigma_2)=\Theta(p)$. 
\end{example}
\begin{example}
$X_i\stackrel{i.i.d.}{\sim}\text{Uniform}(\surd{p}\mathbb{S}^{p-1})$ for $i=1,\ldots,n$, and $Y_j\stackrel{i.i.d.}{\sim}\text{Uniform}(\surd{p}\mathbb{S}^{p-1})$ for $j=1,\ldots,m$, where $\mathbb{S}^{p-1}$ denotes the unit sphere of $\mathbb{R}^p$.
\end{example}
\begin{figure}
    \centering
    \includegraphics[width=\textwidth]{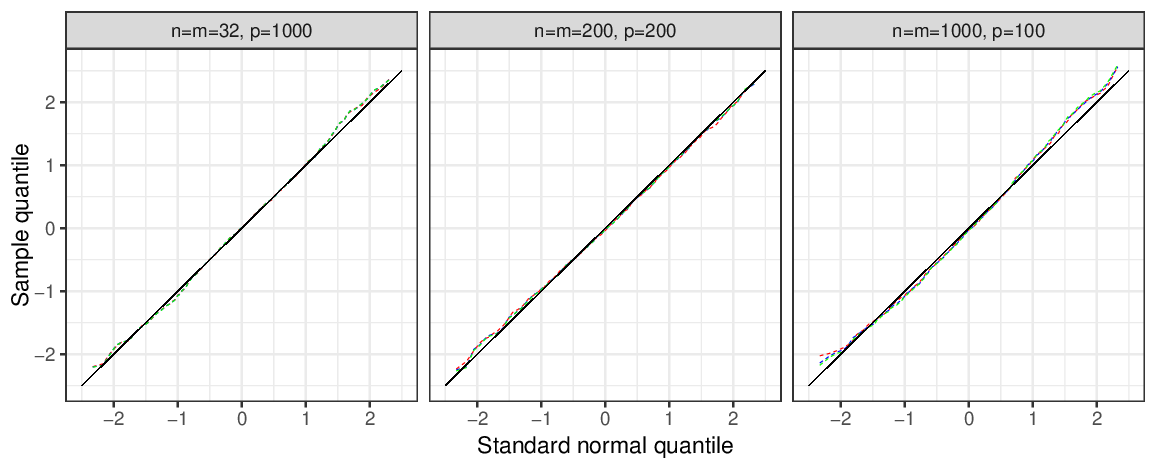}
    \caption{\footnotesize QQ plots for the MMD-based test statistics for data from a Dirichlet distribution, where the results are obtained based on 1000 replications. The dashed red, dashed blue, dashed green, and solid black lines correspond to the energy distance, Gaussian kernel, Laplace kernel and the 45-degree line, respectively.}
    \label{fig:dir}
\end{figure}

\begin{figure}
    \centering
    \includegraphics[width=\textwidth]{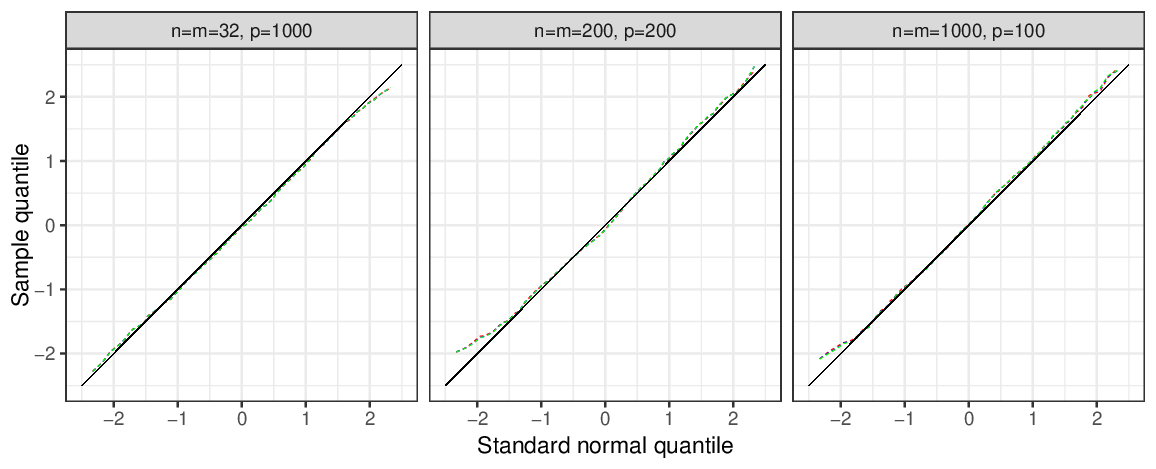}
    \caption{\footnotesize QQ plots for the MMD-based test statistics for data from a spherical distribution, where the results are obtained based on 1000 replications. The dashed red, dashed blue, dashed green, and solid black lines correspond to the energy distance, Gaussian kernel, Laplace kernel and the 45-degree line, respectively.}
    \label{fig:sphere}
\end{figure}
As seen from Figures \ref{fig:dir}--\ref{fig:sphere}, the normal approximation still appears to be accurate under the null hypothesis for these two models which do not satisfy the general multivariate model specified in (\ref{eq-m}) of the main paper.

\section{Impact of kernel and bandwidth on power}\label{secE}
Here we consider the case where $\mu_1\neq \mu_2$ and $\Sigma_1=\Sigma_2$ in the regime $N=o(p)$. \citet{ramdas2015adaptivity} found that the MMD-based tests with different kernels and bandwidths have asymptotically equal power under the mean difference alternatives. We observe a similar phenomenon in Example \ref{example2}, where the choice of kernels does not affect the asymptotic power. Indeed, the bandwidth also plays no role. To understand this phenomenon, let us assume that $\|\mu_1-\mu_2\|_2^2=\Theta(1)$ as in Example \ref{example2}.  
Then we have
\begin{align*}
\Delta_0=-2f^{(1)}(\tau_1)\frac{\|\mu_1-\mu_2\|_2^2}{\gamma}\{1+o(1)\},  
\end{align*}
by the Taylor expansion. On the other hand, we have
\begin{align*}
    \text{var}({\color{red}\Delta_{1,1}})&=\frac{8}{\gamma^2}\{f^{(1)}(\tau_1)\}^{2}\bigg\{\frac{1}{n(n-1)}\text{tr}(\Sigma_{1}^{2})+\frac{1}{m(m-1)}\text{tr}(\Sigma_{2}^{2})+\frac{2}{nm}\text{tr}(\Sigma_{1}\Sigma_{2})\bigg\}\{1+o(1)\}.
\end{align*}
By (\ref{T1-2}), it is not hard to verify that $T_1=O(p^{-2})=o(\Delta_0)$. Suppose $f^{(1)}(\tau_1)<0$ which is the case for energy distance and MMD with the Gaussian kernel or Laplace kernel. Then the asymptotic power relies on 
\begin{align*}
\frac{\Delta_0}{\surd{\text{var}({\color{red}\Delta_{1,1}})}}=\frac{\|\mu_1-\mu_2\|_2^2}{\surd{\frac{2}{n(n-1)}\text{tr}(\Sigma_{1}^{2})+\frac{2}{m(m-1)}\text{tr}(\Sigma_{2}^{2})+\frac{4}{nm}\text{tr}(\Sigma_{1}\Sigma_{2})}}\{1+o(1)\},    
\end{align*}
which is independent of the kernel and bandwidth. 

\end{appendices}

\newpage

\bibliography{main}

\end{document}